\documentclass[11pt]{amsart}
%
%Packages
\usepackage{
  a4wide,
  amsmath, 
  mathtools,
  amsfonts,
  amssymb,
%  pvsymb,   % symbol font
  graphicx, 
  times,
  color,
  hyphenat,
%  mathptmx,
  pinlabel,
%  tikz,
  stmaryrd 
}
\input xy
\xyoption{all}
%
% definitions, commands and styles
%%%%%%%%%%%%%%%%%%%%%%%%%%%%%%%%
%                              %
% Commands, definitions        %
% and styles                   %
%                              %
%%%%%%%%%%%%%%%%%%%%%%%%%%%%%%%%

%\setlength{\textwidth}{5.96in}
%\setlength{\oddsidemargin}{0.5in}
%\setlength{\evensidemargin}{0.4in}

%Pedro: commands
\newcommand{\lai}{\lambda_{i}}
\newcommand{\laii}{\lambda_{i+1}}

\newcommand{\und}[1]{\underline{#1}}

\newcommand{\n}{\noindent}

\newcommand{\qbin}[2]{\left[{{#1}\atop {#2}}\right]}
\newcommand{\schurD}{S_q(n,\Delta_N)}

\DeclareMathOperator{\End}{End}

\DeclareMathOperator{\lev}{L}

%insert figures: 
        % set height
\newcommand{\figins}[3] 
{\raisebox{#1pt}{\includegraphics[height=#2 in]{figs/#3}}}
        % set width

        % set height AND width

        % set scale

%Style
\newtheorem{thm}{Theorem}[section]
\newtheorem{lem}[thm]{Lemma}

\newtheorem{prop}[thm]{Proposition}

\newtheorem{conj}{Conjecture}

\theoremstyle{definition}
\newtheorem{defn}[thm]{Definition}
\newtheorem{prob}{Problem}

%Newcommands

\newcommand{\bN}{\mathbb{N}}
\newcommand{\bZ}{\mathbb{Z}}
\newcommand{\bQ}{\mathbb{Q}}

\newcommand{\bC}{\mathbb{C}}

\newcommand{\cA}{\mathcal{A}}

\newcommand{\cC}{\mathcal{C}}

\newcommand{\cU}{\mathcal{U}}

\newcommand{\cX}{\mathcal{X}}
\newcommand{\cY}{\mathcal{Y}}

\DeclareMathOperator{\Image}{im}
\DeclareMathOperator{\id}{Id}

\DeclareMathOperator{\MOY}{MOY}
\DeclareMathOperator{\bmw}{BMW}
\DeclareMathOperator{\skein}{Skein}
\DeclareMathOperator{\rot}{rot}
\DeclareMathOperator{\res}{res}

\newcommand{\xra}[1]{\xrightarrow{#1}}

%%%%%%%%%%%%%%%%%%%%%%
% macro for captions %
%%%%%%%%%%%%%%%%%%%%%%
%
% must write
%     \newdimen\captionwidth\captionwidth=\hsize
% after begin{document} (it must be defined locally!!)
%
\catcode`\@=11
\long\def\@makecaption#1#2{%
    \vskip 10pt
    \setbox\@tempboxa\hbox{%
\small{#1: }\ignorespaces #2}%
    \ifdim \wd\@tempboxa >\captionwidth {%
        \rightskip=\@captionmargin\leftskip=\@captionmargin
        \unhbox\@tempboxa\par}%
      \else
        \hbox to\hsize{\hfil\box\@tempboxa\hfil}%
    \fi}
\newdimen\@captionmargin\@captionmargin=2\parindent
\newdimen\captionwidth\captionwidth=\hsize
\catcode`\@=12
%
%

%
%
% only \sec in TOC
\setcounter{tocdepth}{1}
%
%
%%%%%%%%%%%%%%%%%%%%%%%%%%%%%%%%%%%%%%%%
%           title                    %%%
 %%%%%%%%%%%%%%%%%%%%%%%%%%%%%%%%%%%%%%%%
%
\title{A remark on BMW algebra, $q$-Schur algebras 
and categorification}
\author{Pedro Vaz}
\address{% old address
CAMGSD\\Instituto Superior T\'{e}cnico 
\\ Avenida Rovisco Pais\\ 1049-001 Lisboa\\ Portugal}
\address{ % at the time of publication
Institut de Recherche en Math\'ematique et Physique\\
Universit\'e Catholique de Louvain\\ 
Chemin du Cyclotron 2\\ 
B 1348 Louvain-la-Neuve\\ 
Belgique}
\email{pedro.vaz@uclouvain.be}
\author{Emmanuel Wagner}
\address{Institut de Math\'ematiques de Bourgogne UMR 5584 du CNRS\\
7 avenue Alain Savary BP 47870 21078 Dijon Cedex\\ France}
\email{emmanuel.wagner@u-bourgogne.fr}
%
%
%
%%%%%%%%%%%%%%%%%%%%%%%%%%%%%
%%%%                     %%%%
%%%%    Begin Document   %%%% 
%%%%                     %%%%
%%%%%%%%%%%%%%%%%%%%%%%%%%%%%
\begin{document}
%
%
% this must be defined locally (to center captions)
\newdimen\captionwidth\captionwidth=\hsize
%
%%%%%%%%%%%%%%%%%%%%%%%%%%%%%%%%%%%%%%%%
%%%                                  %%%
%%%            Abstract              %%%
%%%                                  %%%
%%%%%%%%%%%%%%%%%%%%%%%%%%%%%%%%%%%%%%%%
%
%
\begin{abstract} 
We prove that the 2-variable BMW algebra
embeds into an algebra constructed from the HOMFLY-PT polynomial.
We also prove that the $\mathfrak{so}_{2N}$-BMW algebra embeds in the $q$-Schur algebra
of type $A$.
We use these results 
to suggest a schema providing categorifications of the $\mathfrak{so}_{2N}$-BMW algebra.
\\[6ex]
\noindent 2010 \emph{Mathematics Subject Classification.}
57M27, 81R50; 17B37, 16W99.
\end{abstract}
\maketitle 
%v.4 -\today
\tableofcontents
\pagestyle{myheadings}
%\markright{Pedro Vaz and Emmanuel Wagner}
\markboth{\em\small Pedro Vaz and Emmanuel Wagner}{\em\small A remark on BMW algebra, $q$-Schur algebras and categorification.}
%
%
%
%%%%%%%%%%%%%%%%%%%%%%%%%%%%%%%%%%%%%%%%
%%%                                  %%%
%%%        Introduction              %%%
%%%                                  %%%
%%%%%%%%%%%%%%%%%%%%%%%%%%%%%%%%%%%%%%%%
\section{Introduction}\label{sec:intro}

%%%%%%%%%%%%%%%%%%%%%%%%

The most popular 2-variable link polynomials nowadays are the \emph{HOMFLY-PT polynomial}~\cite{homfly, pt} 
and the \emph{Kauffman polynomial}~\cite{Kauff}.
Recall the HOMFLY-PT polynomial $P=P(a,q)$ is the unique invariant of oriented  links satisfying
\begin{equation*}
a P\biggl(\figins{-10}{0.35}{Xing-pu}\biggr) \ \ - \ \ 
a^{-1}P\biggl(\figins{-10}{0.35}{Xing-nu}\biggr)
=\ \ (q - q^{-1})P \biggl(
\figins{-10}{0.35}{upup}
\biggr)
\end{equation*}
and taking the value 
$\tfrac{a-a^{-1}}{q-q^{-1}}$ on the unknot, and that 
the Kauffman polynomial $F=F(a,q)$ is the two-variable invariant uniquely defined on framed unoriented links 
by
\begin{gather*}
F\biggl(\figins{-10}{0.35}{ucurl-l}\biggr)\ \ 
= \ a^{-2}q\ \ 
F\biggl(\ \figins{-10}{0.35}{one}\ \biggr)
\displaybreak[0]\\[2ex]  
F\biggl(\figins{-10}{0.35}{Xing-p}\biggr) \ \ - \ \ 
F \biggl(\figins{-10}{0.35}{Xing-n}\biggr)\
=\ \ (q - q^{-1})\Biggl(\ 
F\biggl(\figins{-10}{0.35}{cupcap}\biggr)\  -  \
F\biggl(\figins{-10}{0.35}{cupcap-vert}\biggr)\
\Biggr)
\end{gather*}
and taking the value $\tfrac{a^2q^{-1}-a^{-2}q}{q-q^{-1}}+1$ on the unknot.

\medskip

These two polynomials are known to be distinct link invariants since they distinguish between different sets of links.
Nevertheless there is a connection between them, found by F.~Jaeger in 1989 (see~\cite{kauff-phys}). 
Jaeger gives a state expansion of the Kauffman polynomial of a link $L$ in terms of HOMFLY-PT 
polynomials of certain links associated to $L$.
Unfortunately Jaeger passed away before including it in an article, 
but this result appears in Kauffman's book~\cite{kauff-phys}.
Jaeger formula can be roughly stated as follows:

\medskip

\n{\bf Theorem} (Jaeger's Theorem, 89'){\bf .}
\emph{For each unoriented link $L$ there is a family 
of oriented link diagrams $\{D_i\}_{\in I}$ and 
coefficients $\{c_i(a,q)\}_{i\in I}$ in $\bC(a,q)$  
such that
\begin{equation*}
F(L) = \sum_{i\in I}c_i(a,q)P(D_i).
\end{equation*}
}

%\medskip

\n The coefficient $c_i(a,q)$ is determined from the combinatorial data of $D_i$. 
We recall the family of link diagrams and compute the corresponding coefficients in Section~\ref{sec:jaeger}.

\medskip

The polynomials mentioned above have algebraic counterparts.
The algebra underlying the Kauffman polynomial was found by J.~Birman and H.~Wenzl in~\cite{birmanw} and independently
by J.~Murakami in~\cite{Mur} and became known as \emph{Birman-Murakami-Wenzl algebra} 
(BMW algebra). 
Later Kauffman and Vogel~\cite{kauff-vogel}  gave algebraic constructions 
of the Kauffman and HOMFLY-PT polynomials in terms of diagrams modulo relations.
We call the latter \emph{HOMFLY-PT skein algebras}.
This motivates the following.

\begin{prob}
Is there a version of Jaeger's theorem for the algebras underlying the respective polynomials?
In other words, can we extend Jaeger's theorem to a statement between the BMW and HOMFLY-PT skein algebras? 
\end{prob}

We answer this question affirmatively (see Theorem~\ref{thm:jaegmap} for the details).

\smallskip

\n{\bf Theorem 1.}  \emph{There is an injective homomorphism of algebras from the BMW algebra to the 
HOMFLY-PT skein algebra
inducing Jaeger's formula for polynomials.}

\medskip

This homomorphism has a positivity property, 
it gives a standard basis element of the BMW algebra as a linear combination 
of elements of the HOMFLY-PT skein algebra with coefficients in $\bN(a,q)$.
In face of the categorifications of the HOMFLY-PT polynomial available~\cite{Kh-bim, KR2} 
it seems natural to ask the following 
question.
\begin{prob}
Can Jaeger's theorem be used to produce a link homology categorifying the two-variable Kauffman polynomial
of links?
\end{prob}
We find a partial answer to this question.
The first difficulty we encounter is that in none of the categorifications of the 
HOMFLY-PT polynomials are there sufficiently many direct sum decompositions to 
guarantee we obtain the $\bmw_n(a,q)$.  
This is explained in Subsection~\ref{ssec:matfac} and is related with the problem 
of producing a link homology categorifying the HOMFLY-PT polynomial for tangles. 
As a matter of fact such a strategy has been applied by Wu to obtain a categorification of the 
(1-variable) $\mathfrak{so}_6$-Kauffman polynomial~\cite{Wu}.

It turns out that a consequence of this difficulty is a new result about the BMW algebra. 
The aforementioned direct sum decompositions do exist for the 1-variable specializations $a=q^N$ 
and give a categorification of the 1-variable specialization of the BMW algebra.
Moreover, in the 1-variable picture we can also bring another algebra into the play, the $q$-Schur algebra  $S_q(n,d)$
(regarding $q$-Schur algebras we follow the definitions and conventions of~\cite{MSV}).
In this setting the specialized HOMFLY-PT skein algebra $\skein_q(n,N)$ is related with the representation
 theory of quantum $\mathfrak{sl}_N$ and we use this to prove it embeds in the Schur algebra.
More precisely, in the 1-variable picture the HOMFLY-PT skein algebra describes
the algebra of intertwiners between tensor products of the fundamental representation of 
 $U_q(\mathfrak{sl}_N)$ (which is $V=\bQ^n$) and its dual.
The isomorphism between $V^*$ and the wedge power $\wedge^{N-1}V$ induces an injection of the 
HOMFLY-PT skein algebra into the $q$-Schur algebra and the 
composite with the Jaeger homomorphism results in the following  
(see Proposition~\ref{prop:injschur} for the details):

\medskip

\n{\bf Theorem 2.} \emph{There is a injective homomorphism of algebras from the BMW algebra to the $q$-Schur algebra.}

\medskip

\n One of the consequences is that we can use the tools of the $q$-Schur categorification to 
produce another categorification of the 1-variable specialization $\bmw_q(n,N)$ (another categorification, 
different in flavour from this one, comes from applying the results of~\cite{KR1} to $\skein_q(n,N)$).

\medskip

The plan of the paper is the following.
In Section~\ref{sec:algebras} we introduce two of the algebras involved in Jaeger's theorem,
the BMW algebra and the HOMFLY-PT skein algebra. They are introduced diagrammatically from
algebras of tangles induced from the corresponding link polynomials.
In Section~\ref{sec:jaeger} we state Jaeger's theorem connecting the Kauffman and the HOMFLY-PT 
link polynomials, explain the proof, and state and prove a version of Jaeger's theorem for the underlying algebras.
In Section~\ref{sec:emb} we set $a=q^N$ and start working with the 1-variable specialization of the algebras mentioned above.
This specialization allows the introduction of the $q$-Schur algebra $S_q(n,d)$. 
We prove in this section that $\bmw_q(n,N)$ embeds in a certain 
direct sum of $q$-Schur algebras.
Finally in Section~\ref{sec:cat} we explain how to combine the results of the previous sections with the 
categorifications of $\skein_q(n,N)$ and $S_q(n,d)$ to obtain categorifications of $\bmw_q(n,N)$.
These categorifications do not have the \emph{Krull-Schmidt property} though, 
which is a desirable  property 
for reasons we explain later.
Nevertheless we propose an extension of these categorifications which 
we conjecture to have this property.

\medskip

We have tried to make the paper self contained with the exception of 
Subsections~\ref{ssec:matfac} and~\ref{ssec:scat} where we assume familiarity
with the papers~\cite{KR1, KR2, MSV}.

%
%%%%%%%%%%%%%%%%%%%%%%%%%%%%%%%%%%%%%%%%
%%%                                  %%%
%%%       Algebras                   %%%
%%%                                  %%%
%%%%%%%%%%%%%%%%%%%%%%%%%%%%%%%%%%%%%%%%
\section{BMW and Skein algebras}\label{sec:algebras}

%%%%%%%%%%%%%%%%%%%%%%%%%%%%%%%%%%%%%%%%%%%%%%%%%%%%%%%%%%%%%%%%%%%%
\subsection{The Birman-Murakami-Wenzl algebra $\bmw_n(a ,q)$}      %
%%%%%%%%%%%%%%%%%%%%%%%%%%%%%%%%%%%%%%%%%%%%%%%%%%%%%%%%%%%%%%%%%%%%
\label{ssec:bmw}                                                   %
%%%%%%%%%%%%%%%%%%%%%%%%%%%%%%%%%%%%%%%%%%%%%%%%%%%%%%%%%%%%%%%%%%%%

Let $R=\bC(a,q)$ be the field of rational functions in two variables and $n$ a positive integer. 
We give two different presentations of the Birman-Murakami-Wenzl algebra $\bmw_n(a,q)$ over $R$. 
Both of them have a diagrammatic description.
The first one is usually known in the literature as the \emph{Kauffman tangle algebra}~\cite{Kauff} 
and consists of a quotient of the framed tangle algebra by local relations
which come from the Kauffman polynomial. 
\begin{defn}
Define $\bmw^{\tau}_n(a,q)$ as the free algebra over $R$ generated by $(n,n)$-framed unoriented 
tangles up to regular isotopies modulo the local relations~\eqref{eq:bmw-kauff} below.
%R1a,R1b, skein relation.%%%%%%%%%%%%%%%%%%%%%%%%%%%
\begin{equation}
\label{eq:bmw-kauff}
\begin{split}
\figins{-10}{0.35}{ucurl-l}\ \ 
= \ a^{-2}q\ \ 
\figins{-10}{0.35}{one}
&\mspace{70mu}
\figins{-10}{0.35}{ucurl-r}\ \ 
= \ a^2q^{-2}\ \ 
\figins{-10}{0.35}{one}
\\[2ex]
\figins{-10}{0.35}{Xing-p} \ \ - \ \ 
\figins{-10}{0.35}{Xing-n}
=&\  (q - q^{-1})\biggl(\ 
\figins{-10}{0.35}{cupcap}\  -  \
\figins{-10}{0.35}{cupcap-vert}\
\biggr)
\end{split}
\end{equation}
\end{defn}
As usual in the presentation of algebras by diagrams the product is given by stacking one tangle over another.
We always read diagrams from bottom to top and this means the product
$ab$ corresponds to stacking the diagram for $a$ on the top 
of the diagram for $b$.
We will always assume this throughout this paper.
Notice the unusual normalization we are using 
(this is forced by Jaeger's theorem and will be clear later). 
 
\smallskip

To give the next presentation, due to 
Kauffman and Vogel~\cite{kauff-vogel}, we need to introduce some concepts.
A \emph{$(n,n)$ 4-graph} is a planar graph with $2n$ univalent vertices 
and the rest of the vertices are  4-valent. 
It can be embedded in a rectangle with $n$ of the univalent vertices lying on the bottom segment and 
$n$ lying on the top one.
We think of an $(n,n)$ 4-graph as the singularization of a $(n,n)$
 unoriented tangle diagram, which is the graph obtained by applying the transformation
\begin{equation*}
\figins{-10}{0.35}{Xing-p}\ \ \longmapsto\ \ 
\figins{-10}{0.35}{4vert}\ \text{\reflectbox{$\longmapsto$}}\ \
\figins{-10}{0.35}{Xing-n}
\end{equation*}
to all its crossings.

We also introduce some short-hand notation in order to simplify 
many of the expressions that we will have to handle.
For a formal parameter $a$ and for $n,k\in\bZ$ we denote $[a^n,k]:=\frac{a^nq^{-k}-a^{-n}q^k}{q-q^{-1}}$.
We allow a further simplification by writing $[a^n]$ instead of $[a^n,0]$. 
Moreover, when dealing with 1-variable specializations we
use $[m+k]$ for $[q^m,k]=\tfrac{q^{m+k} -q^{-m-k} }{q-q^{-1}}$, 
which is the usual \emph{quantum integer}.

\begin{defn}
Define $\bmw_n(a,q)$ as the free algebra over $R$ generated by $(n,n)$
 4-graphs up to planar isotopies fixing the univalent vertices modulo the following local relations:
\begin{align}\nonumber
\figins{-13}{0.45}{ukink-r}\
=& \ 
[a](q^2a^{-1}+q^{-2}a)\ \ 
\figins{-13}{0.45}{uone-bendl}
\displaybreak[0]\\[2ex]\nonumber
\figins{-19}{0.60}{udigon}\
=&\ (q+q^{-1})
\figins{-19}{0.60}{4vert-tall}\
+\ \biggl( [a^3,-3] + 1\biggr)\
\figins{-19}{0.60}{cupcap-tall}
\end{align}

\begin{equation}
%\displaybreak[0]\\[2ex]\nonumber
\begin{split}
\figins{-19}{0.60}{ureid3-r}\ +\ 
\figins{-19}{0.60}{ucap-dl}\ +\ &
\figins{-19}{0.60}{ucap-ul}\ +\ 
\figins{-19}{0.60}{4vertarcl-tall}\ +\ [a^2,-4]\
\figins{-19}{0.60}{cupcaparcl-tall}
\\[2ex]\mspace{100mu}
=\ 
\figins{-19}{0.60}{ureid3-l}\ +\ &
\figins{-19}{0.60}{ucap-dr}\ +\  
\figins{-19}{0.60}{ucap-ur}\ +\ 
\figins{-19}{0.60}{4vertarcr-tall}\ +\ [a^2,-4]\ 
\figins{-19}{0.60}{cupcaparcr-tall}
\end{split}
\end{equation}
%\displaybreak[0]\\[2ex]\nonumber
\begin{align*}
\figins{-12.5}{0.43}{box}\ \figins{-8}{0.30}{loop}\ 
&=\  \delta\
\figins{-12.5}{0.43}{box}
\displaybreak[0]\\[2ex]\nonumber
%\begin{align}
\figins{-8}{0.30}{loop}\ 
&=\  \delta
\end{align*}
\label{kauffgraphrelation}
where $\delta=[a^2,-1]+1$.
\end{defn}

We remark that the special elements
\begin{align}
\label{eq:genX}
\rho_i\ &=\quad 
\labellist
\pinlabel $\dotsc$ at 110 125
\pinlabel $\dotsc$ at 665 125
\tiny\hair 2pt
\pinlabel $1$   at   5 -25
\pinlabel $i$   at 325 -25
\pinlabel $i+1$ at 445 -27
\pinlabel $n$   at 765 -25
\endlabellist
\figins{-19}{0.60}{4vertgen-tall}
\rlap{\qquad\quad $(i=1,\dotsc ,n-1)$}
\displaybreak[0]\\[2ex]
\label{eq:genU}
e_i\ &=\quad 
\labellist
\pinlabel $\dotsc$ at 110 125
\pinlabel $\dotsc$ at 665 125
\tiny\hair 2pt
\pinlabel $1$   at   5 -25
\pinlabel $i$   at 325 -25
\pinlabel $i+1$ at 445 -27
\pinlabel $n$   at 765 -25
\endlabellist
\figins{-19}{0.60}{cupcapgen-tall}
\rlap{\qquad\quad $(i=1,\dotsc ,n-1)$}
\intertext{together with}
\label{eq:genum}
1\ &=\quad 
\labellist
\pinlabel $\dotsc$ at 110 125
\pinlabel $\dotsc$ at 665 125
%\tiny\hair 2pt
%\pinlabel $1$   at   5 -25
%\pinlabel $n$   at 765 -25
\endlabellist
\figins{-19}{0.60}{idgen-tall}
\vspace*{2ex}
\end{align}
generate $\bmw_n(a,q)$ and can be used to give a presentation of  $\bmw_n(a,q)$ by generators and relations (see \cite{morton}).

\medskip

The algebras $\bmw^{\tau}_n(a,q)$ and $\bmw_n(a,q)$ are isomorphic; it follows directly from 
the results in~\cite{kauff-vogel}.
\begin{lem}
The homomorphism $\phi\colon\bmw^{\tau}_n(a,q)\to\bmw_n(a,q)$ given by
\begin{align*}
\figins{-10}{0.35}{Xing-p} 
&\longmapsto\ \ 
q\ \figins{-10}{0.35}{cupcap}\  -  \
\figins{-10}{0.35}{4vert}\ + \
q^{-1}\figins{-10}{0.35}{cupcap-vert}
%\end{align*}
\intertext{and} 
%\begin{align*}
\figins{-10}{0.35}{cupcap}
&\longmapsto\ \ 
\figins{-10}{0.35}{cupcap}
\end{align*}
is an isomorphism of algebras.
\label{KViso}
\end{lem}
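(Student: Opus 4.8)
The plan is to construct $\phi$ on the Kauffman tangle presentation and verify it descends to $\bmw_n(a,q)$, then exhibit an inverse. Concretely, I would first declare $\phi$ on the generating tangles of $\bmw^{\tau}_n(a,q)$ by the two formulas in the statement: a positive crossing maps to $q\,e_i - \rho_i + q^{-1}\cdot 1$ (in the pictorial shorthand of the statement) and a cup-cap maps to itself; the value on a negative crossing is then forced, and one computes it from the HOMFLY-PT-type relations among the $4$-graph generators to be $q^{-1}e_i - \rho_i + q\cdot 1$. The first thing to check is that these assignments are compatible with the defining relations \eqref{eq:bmw-kauff} of $\bmw^{\tau}_n(a,q)$: the skein relation $\sigma_i - \sigma_i^{-1} = (q-q^{-1})(e_i - 1)$ becomes the identity $(q\,e_i - \rho_i + q^{-1}) - (q^{-1}e_i - \rho_i + q) = (q-q^{-1})(e_i - 1)$, which is immediate, and the two curl relations translate into the kink and digon relations of Definition~\ref{kauffgraphrelation} after a short computation with $\delta$, $[a]$, $[a^2,-1]$ and $[a^3,-3]$; this is exactly the bookkeeping carried out by Kauffman and Vogel in~\cite{kauff-vogel}, so I would cite that verification rather than redo it in full.

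Next I would check that $\phi$ is an algebra homomorphism onto $\bmw_n(a,q)$, i.e. that the images of the crossings together with the cup-caps satisfy all the $4$-graph relations (the kink, digon, the big ``Reidemeister III'' relation, and the two loop/bubble relations with parameter $\delta$). Here the key observation is that the transformation $\sigma_i \mapsto 4\text{-vertex}$ used to define singularization is, up to the above linear change of basis, exactly $\phi$: resolving each $\rho_i$ back in terms of $\sigma_i$, $\sigma_i^{-1}$ and $e_i$ shows that every $4$-graph relation is a consequence of the Kauffman tangle relations, so $\phi$ is well-defined and surjective (the generators $\rho_i$, $e_i$, $1$ of \eqref{eq:genX}--\eqref{eq:genum} are all hit).

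Finally, for injectivity I would construct the inverse explicitly. Solving the first formula for the $4$-vertex gives $\rho_i \mapsto q\,e_i - \sigma_i + q^{-1}\cdot 1$ (again in pictorial shorthand), and $e_i \mapsto e_i$; call this map $\psi\colon\bmw_n(a,q)\to\bmw^{\tau}_n(a,q)$. One checks $\psi$ respects the $4$-graph relations by the same computation as above read backwards, and that $\psi\circ\phi$ and $\phi\circ\psi$ are the identity on generators, hence on the whole algebras. Since $\phi$ admits a two-sided inverse it is an isomorphism.

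\textbf{Main obstacle.} The only real work is the relation-by-relation verification, and among those the ``Reidemeister III'' relation of Definition~\ref{kauffgraphrelation} is the delicate one: after substituting $\rho_i = q\,e_i - \sigma_i + q^{-1}$ into both sides it expands into many terms, and one must use the braid relation for the $\sigma_i$ together with the cup-cap (Temperley--Lieb-type) relations and the value of $\delta$ to see the two sides agree. Fortunately this is precisely the content of~\cite{kauff-vogel}, so I would organize the proof so that this computation is imported rather than reproduced, and spend the written argument on setting up $\phi$, $\psi$ and checking $\psi\circ\phi=\mathrm{id}=\phi\circ\psi$ on the generators~\eqref{eq:genX}--\eqref{eq:genum}.
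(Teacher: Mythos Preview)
Your proposal is correct and matches the paper's approach: the paper does not give a proof at all but simply states before the lemma that the isomorphism ``follows directly from the results in~\cite{kauff-vogel},'' which is exactly where you also defer the relation-by-relation verification. Your write-up is just a more explicit unpacking of that citation, with the same inverse map (solve for the $4$-vertex in terms of the crossing and cup-cap) that Kauffman--Vogel use.
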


The homomorphism $\phi$ is the basis of Kauffman-Vogel's state-sum model 
for the Kauffman polynomial~\cite{kauff-vogel}.

%%%%%%%%%%%%%%%%%%%%%%%%%%%%%%%%%%%%%%%%%%%%%%%%%%%%%%%%%%%%%%%%%%%%%%
\subsection{The HOMFLY-PT skein algebra $\skein_n(a,q)$}         %
%%%%%%%%%%%%%%%%%%%%%%%%%%%%%%%%%%%%%%%%%%%%%%%%%%%%%%%%%%%%%%%%%%%%%%
\label{ssec:skein}                                                   %
%%%%%%%%%%%%%%%%%%%%%%%%%%%%%%%%%%%%%%%%%%%%%%%%%%%%%%%%%%%%%%%%%%%%%%

In this subsection we define another algebra, denoted $\skein_n(a,q)$, which we call the \emph{HOMFLY-PT skein
 algebra}~\cite{kauff-vogel}  and is very similar in spirit to $\bmw_n(a,q)$.  
As in the previous subsection we give two presentations, one of as the free algebra 
over $R$ generated by $(n,n)$
 oriented tangles up to isotopies modulo the HOMFLY-PT skein relations,
and another one using 4-valent oriented graphs.

\medskip

\begin{defn}
Define $\skein^{\tau}_n(a,q)$ as the free algebra over $R$ generated by isotopy classes 
of $(n,n)$-oriented tangles modulo the (HOMFLY-PT) local relation (\ref{HOMFLYPTskein}).%~\eqref{eq:tanskein-rels}
%R1a,R1b, skein relation.%%%%%%%%%%%%%%%%%%%%%%%%%%%
\begin{equation}
%\label{eq:tanskein-rels}
%\begin{split}
%\figins{-10}{0.35}{curl-lup}\ \ 
%= \ a \ \ 
%\figins{-10}{0.35}{oneup-bendl}
%\mspace{70mu}
%\figins{-10}{0.35}{curl-rup}\ \ 
%= \ a^{-1}\ \ 
%\figins{-10}{0.35}{oneup-bendl}
%\mspace{20mu}&\mspace{50mu}
%\figins{-10}{0.35}{curl-ldn}\ \ 
%= \ a^{-1}\ \ 
%\figins{-10}{0.35}{onedn-bendl}
%\mspace{70mu}
%\figins{-10}{0.35}{curl-rdn}\ \ 
%= \ a\ \ 
%\figins{-10}{0.35}{onedn-bendl}
%\\[2ex]
a\ \     \figins{-10}{0.35}{Xing-pu}\ \ - \ \ 
a^{-1}\ \ \figins{-10}{0.35}{Xing-nu}\ \ 
%&
=\ \ (q - q^{-1})\ \
\figins{-10}{0.35}{upup} 
%\end{split}
\label{HOMFLYPTskein}
\end{equation}
\end{defn}

\medskip

Paralleling the unoriented case we define an \emph{oriented $(n,n)$ 4-graph} 
to be an oriented planar 4-valent graph with $2n$ univalent vertices 
such that the graph can be embedded in a rectangle with $n$ of the univalent vertices lying on the bottom 
segment and $n$ on the top one. In addition, we require the orientation 
near a 4-valent vertex to be as
\begin{equation*}
\figins{-10}{0.35}{crossuor}\ .
\end{equation*}
In other words, an oriented $(n,n)$ 4-graph is the singularization of a $(n,n)$
 oriented tangle diagram.
All the oriented 4-valent graphs we consider are of this type.

\begin{defn}
\label{def:moy-aq}
Define the algebra $\skein_n(a,q)$ as the free algebra over $R$ generated by $(n,n)$
 oriented 4-valent graphs up to planar isotopies modded out by the following relations:
\begin{align}
\figins{-13}{0.45}{kinkup-r}\
&= \ 
[a,-1]\ \ 
\figins{-13}{0.45}{oneup-bendl}
\label{R1MOY}
\displaybreak[0]\\[2ex]
\figins{-19}{0.60}{digon-up}\ 
&=\ (q+q^{-1})\ \
\figins{-19}{0.60}{cross-u-tall}
\label{R2aMOY}
\displaybreak[0]\\[2ex]
%\label{eq:badsqr}
\figins{-19}{0.60}{digon-ud-tall}\ 
&=\ 
\figins{-19}{0.60}{arcs-ud-tall}\
+\ [a,-2]\ \
\figins{-19}{0.60}{cupcap-lr-tall}\
\label{R2bMOY}
\displaybreak[0]\\[2ex]
%\begin{multline}
\figins{-19}{0.60}{reid3-lup}\ +\ 
\figins{-19}{0.60}{arcdumb-up}\
&=\ 
\figins{-19}{0.60}{reid3-rup}\ +\ 
\figins{-19}{0.60}{dumbarc-up}
\label{R3MOY}
%\end{multline}
\displaybreak[0]\\[2ex]
\figins{-8}{0.30}{loop-r}\ =\  [a]
\mspace{40mu} &\mspace{60mu}
\figins{-8}{0.30}{loop-l}\ =\  [a]
\displaybreak[0]\\[2ex]
%\begin{align}
\figins{-12.5}{0.43}{box2}\ \figins{-8}{0.30}{loop-r}\ 
&=\  [a]\
\figins{-12.5}{0.43}{box2}
\label{SPLITMOY}
\end{align}
\end{defn}

As before the product structure is given by stacking one graph over the other being zero if the orientations do not match.
In addition,
\begin{equation}
\figins{-19}{0.60}{reid3-ludu}\
+\ [a,-3]\ \figins{-19}{0.60}{capcup6}
\quad =\quad \figins{-19}{0.60}{reid3-rudu}\
+\ [a,-3]\ \figins{-19}{0.60}{capcup9}
\label{R3bMOY}
\end{equation}
is a consequence of the previous relations.\\

Similarly to Lemma~\ref{KViso} we have (see~\cite{kauff-vogel})
\begin{lem}
The algebras $\skein^{\tau}_n(a,q)$ and $\skein_n(a,q)$ are isomorphic.
\end{lem}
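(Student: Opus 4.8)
The plan is to mimic the proof of Lemma~\ref{KViso}, producing an explicit isomorphism $\psi\colon\skein^{\tau}_n(a,q)\to\skein_n(a,q)$ from the resolution of crossings suggested by the HOMFLY-PT skein relation~\eqref{HOMFLYPTskein}. Concretely, I would define $\psi$ on generators by
\begin{align*}
\figins{-10}{0.35}{Xing-pu}
&\longmapsto\ \
q\ \figins{-10}{0.35}{upup}\ -\ a^{-1}\figins{-10}{0.35}{crossuor}
\displaybreak[0]\\[2ex]
\figins{-10}{0.35}{Xing-nu}
&\longmapsto\ \
q^{-1}\figins{-10}{0.35}{upup}\ -\ a\ \figins{-10}{0.35}{crossuor}
\end{align*}
(up to the precise conventions forced by~\eqref{R1MOY}--\eqref{SPLITMOY}), together with the identity on planar tangles with no crossings; the inverse sends the oriented 4-valent vertex to an appropriate linear combination of the two crossings, extracted by solving the above two relations simultaneously, which is possible since the $2\times 2$ coefficient matrix has determinant $q^{-1}\cdot(-a^{-1}) - q\cdot(-a) \ne 0$ in $R$.

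First I would check that $\psi$ is well defined: the HOMFLY-PT skein relation~\eqref{HOMFLYPTskein} must be sent to $0$, which is a direct substitution, and each framing/Reidemeister-type relation on isotopy classes of oriented tangles must hold in $\skein_n(a,q)$ after applying $\psi$. The Reidemeister~II and~III moves translate into the graphical relations~\eqref{R2aMOY}, \eqref{R2bMOY}, \eqref{R3MOY} and~\eqref{R3bMOY}, and the framing (Reidemeister~I with a curl) translates into~\eqref{R1MOY} together with the loop evaluations; these are exactly the verifications carried out by Kauffman and Vogel, so I would cite~\cite{kauff-vogel} for the bulk of them and only indicate the normalization bookkeeping dictated by our nonstandard conventions. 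Then I would check $\psi$ is an algebra map (immediate, since stacking of tangles goes to stacking of graphs) and that the candidate inverse is well defined on $\skein_n(a,q)$ by verifying it kills each defining relation~\eqref{R1MOY}--\eqref{SPLITMOY}; finally $\psi$ and its inverse are mutually inverse on generators, hence on the whole algebra.

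The main obstacle I anticipate is purely combinatorial: confirming that the graphical relations~\eqref{R1MOY}--\eqref{R3bMOY}, which are stated as a presentation of $\skein_n(a,q)$, are precisely the images under $\psi^{-1}$ of a complete set of relations for $\skein^{\tau}_n(a,q)$ (isotopy of oriented tangles plus the HOMFLY-PT skein relation), with no relation missing and none superfluous. In other words, the real content is that the two presentations define isomorphic algebras, and the delicate point is matching the oriented Reidemeister moves with the six boxed relations under the substitution above, including the consequence~\eqref{R3bMOY}. Since this is exactly the oriented analogue of Lemma~\ref{KViso} and is worked out in~\cite{kauff-vogel}, I would present the map explicitly, record the well-definedness checks that differ from theirs due to our normalization, and otherwise defer to that reference.
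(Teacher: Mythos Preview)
Your approach is exactly what the paper does: it offers no proof, simply writing ``Similarly to Lemma~\ref{KViso} we have (see~\cite{kauff-vogel})'' and leaving the explicit map and verifications to that reference. So proposing the crossing-to-vertex substitution and deferring the Reidemeister checks to Kauffman--Vogel is precisely the intended route.

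One small correction is worth flagging, since your hedge ``up to the precise conventions'' hides a genuine discrepancy. With your coefficients, substituting into $a\,\psi(\text{positive}) - a^{-1}\psi(\text{negative}) - (q-q^{-1})\cdot\text{id}$ leaves the residual $(aq - a^{-1}q^{-1} - q + q^{-1})\cdot\text{id}$, which is nonzero, so the HOMFLY-PT skein relation is not killed. The paper records the correctly normalized map later in Subsection~\ref{ssec:jaegBMW} as $f_{\skein}$: the positive crossing goes to $a^{-1}q$ times the identity resolution minus $a^{-1}$ times the vertex, and the negative crossing to $aq^{-1}$ times the identity minus $a$ times the vertex. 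With that rescaling your plan goes through verbatim; the determinant you compute remains nonzero and the inverse you describe is exactly the $f_{\skein}^{-1}$ the paper also writes down.
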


\medskip

We say an intersection $\ell$ of a diagram $D\in \skein_n(a,q)$ 
with a horizontal line is \emph{generic} if it does not cross any 
singularity of the height function.
Let $\und{\ell}$ be a sequence of $+$'s and $-$'s
in 1-1 correspondence with the orientation of the arcs of $D$ in the neighborhood of the points in $\ell$,
where $+$ (resp. $-$) corresponds to an upward (resp. downward) orientation. 
For example, for the generic intersection
\begin{equation*}
%\labellist
%\small
%\hair 2pt
%\pinlabel $\und{\ell}=(-++)$ at 450 180
%\endlabellist
\figins{-25}{0.60}{arc-intersect}
\end{equation*}
we have $\und{\ell}=(-++)$.
Let $n_+(\ell)$ (resp. $n_-(\ell)$) be the number of $+$'s (resp. $-$'s) in $\und{\ell}$.
The quantity $n_+(\ell)-n_-(\ell)$ is the same for all 
generic intersections and therefore gives an invariant of the diagram $D$ itself which we denote
$n_\pm(D)$.
Given an element of $\skein_q(n,N)_{n_\pm}$ one can express it as a 
linear combination of elements such that $n_++n_-$ is at most equal to $n$. 
In the sequel we will also assume that the elements of $\skein_q(n,N)_{n_\pm}$ are 
of this form~\footnote{We thank A-L. Thiel for pointing this out.}. 
We have a direct sum decomposition of algebras
\begin{equation}\label{eq:skeingrad}
\skein_n(a,q) = \bigoplus\limits_{n_\pm =-n}^n \bigl(\skein_n(a,q)\bigr)_{n_\pm}
\end{equation}
where $(\skein_n(a,q))_{n_\pm}$ consists of all diagrams 
$D$ with $n_\pm(D)=n_\pm$.

\smallskip

For each  $\und{\ell}$ with $\vert\und{\ell}\vert=n$ there is an idempotent $e_{\und{\ell}}$ given
by $n$ parallel vertical strands with orientations matching $\und{\ell}$.
We have 
\begin{equation*}
1_{\skein_n(a,q)} = \sum\limits_{ \und{\ell}\in\{+,-\}^n} e_{\und{\ell}} .
\end{equation*}
We write $e_{(+)^n}$ in the special case of $n$ plus signs, $\und{\ell}=(+,\dotsc,+)$,
\begin{equation}
\label{eq:special-idemp-skein}
e_{(+)^n}\ =\ 
\labellist
\small
\hair 2pt
\pinlabel $\dotsc$ at 265 130
\endlabellist
\figins{-19}{0.60}{idskeinup}
\end{equation}

%%%%%%%%%%%%%%%%%%%%%%%%%%%%%%%%%%%%%%%%%%%%%%%%%%%%
\subsection{The Iwahori-Hecke algebra $H_n(q)$}    % 
%%%%%%%%%%%%%%%%%%%%%%%%%%%%%%%%%%%%%%%%%%%%%%%%%%%%
\label{ssec:hecke}                                 %
%%%%%%%%%%%%%%%%%%%%%%%%%%%%%%%%%%%%%%%%%%%%%%%%%%%%

The algebras $\bmw_n(a,q)$ and $\skein_n(a,q)$ introduced above share 
the Iwahori-Hecke algebra $H_n(q)$  as a quotient.
Recall that $H_n(q)$ is a $q$-deformation of the group algebra of 
the symmetric group on $n$ letters.
\begin{defn}
The Iwahori-Hecke algebra $H_n(q)$ is the unital associative 
$\bQ(q)$-algebra generated by the 
elements $T_i$, $i=1,\ldots, n-1$, subject to the relations
\begin{align*}
T_i^2 &= (q^2-1)T_i+q^2
\\
T_iT_j &= T_jT_i\qquad\text{if}\quad|i-j|>1
\\
T_iT_{i+1}T_i &=T_{i+1}T_iT_{i+1}.
\end{align*}
\end{defn}

\n For $q=1$ we recover the presentation of $\bQ[S_n]$ in terms of the simple 
transpositions $s_i$. For any element $s \in S_n$ we can define 
$T_{s}=T_{i_1}\cdots T_{i_k}$, choosing a reduced expression 
$s=s_{i_1}\cdots s_{i_k}$. The relations above guarantee that 
 all reduced expressions of $s$ give the same element $T_{s}$. 
The elements $T_{s}$, for $s \in S_n$, form a linear basis of $H_n(q)$.

There is a change of generators, 
which is convenient for us. Writing $b_i=q^{-1}(T_i+1)$ the relations above become 
\begin{align*}
b_i^2 &= (q+q^{-1})b_i
\\
b_ib_j &= b_jb_i \qquad\text{if}\quad|i-j|>1
\\
b_ib_{i+1}b_i+b_{i+1} &= b_{i+1}b_ib_{i+1}+b_i.
\end{align*}
\noindent These generators are the simplest elements of the  
{\em Kazhdan-Lusztig basis}~\cite{kazh-lusz}. Although the change of generators is simple, the 
whole change of linear bases is very complicated.

\medskip

The Iwahori-Hecke algebra $H_n(q)$ can be obtained as a quotient of both 
$\bmw_n(a,q)$ 
and $\skein_n(a,q)$ 
(see for instance~\cite{birmanw} and~\cite{kauff-vogel}). 
Let
 $\Bigl[\figins{-6}{0.25}{cupcap}\Bigr]$ and 
$\Bigl[\figins{-6}{0.25}{cupcap-ll},\,\figins{-6}{0.25}{cupcap-rr},\,
\figins{-6}{0.25}{cupcap-rl},\,\figins{-6}{0.25}{cupcap-lr}\Bigr]$
denote the two-sided ideals of $\bmw_n(a,q)$ and $\skein_n(a,q)$ respectively, generated by
the elements inside the brackets.

\begin{lem}
We have isomorphisms
\begin{equation*}
H_n(q)\ 
\cong\ \bmw_n(a,q) /_{ \Bigl[\figins{-6}{0.25}{cupcap}\Bigr]}\
\cong\ e_{(+)^n}\skein_n(a,q)e_{(+)^n} /
_{ \Bigl[\figins{-6}{0.25}{cupcap-ll},\,\figins{-6}{0.25}{cupcap-rr},\,
\figins{-6}{0.25}{cupcap-rl},\,\figins{-6}{0.25}{cupcap-lr}\Bigr]}
\end{equation*}
\end{lem}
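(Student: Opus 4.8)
The plan is to establish both isomorphisms by exhibiting explicit surjections onto $H_n(q)$ and identifying their kernels with the stated ideals. First I would treat the BMW side. The algebra $\bmw_n(a,q)$ is generated by the $\rho_i$ and $e_i$ of~\eqref{eq:genX}--\eqref{eq:genum}; I would send $\rho_i\mapsto b_i$ (equivalently, the Kauffman crossing to $q$ times the identity minus $T_i$, matching the Kazhdan--Lusztig normalization) and $e_i\mapsto 0$, and check that the defining local relations of $\bmw_n(a,q)$ are carried to relations that hold in $H_n(q)$: the kink and digon relations collapse because they involve $e_i$-type pictures on one side, the square/loop relations become trivial, and the big Reidemeister-III-type relation~\eqref{kauffgraphrelation} reduces modulo the $e_i$'s to $b_ib_{i+1}b_i+b_{i+1}=b_{i+1}b_ib_{i+1}+b_i$. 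This gives a well-defined algebra map $\bmw_n(a,q)\to H_n(q)$ which is visibly surjective and visibly kills the two-sided ideal $\bigl[\figins{-6}{0.25}{cupcap}\bigr]$ generated by $e_1$ (all $e_i$ are conjugate), so it factors through the quotient. For injectivity of the induced map I would invoke a dimension/basis count: $\bmw_n(a,q)$ has a standard basis indexed by pairs of Brauer-type diagrams, those with no cups/caps are indexed by $S_n$ and descend to the $T_s$ basis of $H_n(q)$, while those with at least one cap lie in $\bigl[\figins{-6}{0.25}{cupcap}\bigr]$; hence the quotient has dimension $\le n!=\dim H_n(q)$ and the surjection is an isomorphism.

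For the skein side the argument is parallel but one first has to understand the corner algebra $e_{(+)^n}\skein_n(a,q)e_{(+)^n}$. The idempotent $e_{(+)^n}$ of~\eqref{eq:special-idemp-skein} is $n$ upward strands, so this corner consists of all oriented $(n,n)$ 4-graphs with every boundary point oriented upward; by the orientation constraint at 4-valent vertices such a graph is built from upward strands, upward "dumbbell"/singular vertices, and oriented cup-cap pairs whose caps necessarily have mixed $+/-$ orientation. Using the MOY relations~\eqref{R1MOY}--\eqref{SPLITMOY} one sees this corner algebra is generated by the singular generators (call them $\sigma_i$, the all-upward $4$-valent pictures) together with the elements $\figins{-6}{0.25}{cupcap-ll}$ etc. I would map $\sigma_i\mapsto b_i$ and send each of the four listed cup-cap generators to $0$; relation~\eqref{R2aMOY} gives $\sigma_i^2=(q+q^{-1})\sigma_i$, relation~\eqref{R3MOY} gives the braid relation modulo cup-cap terms, and the remaining relations become trivial in the quotient. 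This produces a surjection $e_{(+)^n}\skein_n(a,q)e_{(+)^n}\to H_n(q)$ killing the ideal $\bigl[\figins{-6}{0.25}{cupcap-ll},\figins{-6}{0.25}{cupcap-rr},\figins{-6}{0.25}{cupcap-rl},\figins{-6}{0.25}{cupcap-lr}\bigr]$, and again a basis count (all-upward 4-graphs with no cup-cap pair are indexed by $S_n$, the rest lie in the ideal) shows the induced map on the quotient is an isomorphism.

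Finally I would note that composing the two isomorphisms gives the isomorphism $\bmw_n(a,q)/[\cdots]\cong e_{(+)^n}\skein_n(a,q)e_{(+)^n}/[\cdots]$ directly, and that this composite can also be read off from the Kauffman--Vogel homomorphism $\phi$ of Lemma~\ref{KViso} together with its oriented analogue, which is reassuring for consistency of normalizations. The main obstacle I anticipate is the precise bookkeeping in the skein corner algebra: identifying exactly which oriented 4-graphs survive the orientation constraint at $e_{(+)^n}$, reducing an arbitrary such graph to a normal form via the MOY relations, and checking that the all-upward singular-braid part really does biject with $S_n$ while everything with a cup-cap pair lies in the quoted ideal. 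The BMW side is the classical Kauffman-tangle-to-Hecke quotient and is routine; the content and the potential for sign/normalization slips is on the HOMFLY-PT skein side.
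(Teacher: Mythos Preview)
The paper does not prove this lemma; it records it as known, citing Birman--Wenzl and Kauffman--Vogel. Your direct approach via an explicit surjection and a dimension/basis count is the standard argument from those references, and on the $\bmw$ side it is correct as written.

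On the skein side the situation is simpler than you anticipate, and your phrasing suggests a small confusion. Each of the four oriented cup-cap pictures carries one $+$ and one $-$ among its boundary orientations, so any $(n,n)$-extension of such a picture lies in a summand $\bigl(\skein_n(a,q)\bigr)_{n_\pm}$ with $n_\pm\le n-2$. Since $e_{(+)^n}$ has $n_\pm=n$ and this quantity is constant along products, no element of the form $X\cdot g\cdot Y$ with $g$ a cup-cap generator can land in $e_{(+)^n}\skein_n(a,q)e_{(+)^n}$: the ideal meets the corner trivially and the quotient in the statement is vacuous. The skein half of the lemma is therefore just the classical fact that the all-upward corner already equals $H_n(q)$ (after extending scalars to $R$). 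Your assignment $\sigma_i\mapsto b_i$ is then the entire map: relation~\eqref{R2aMOY} gives $b_i^2=(q+q^{-1})b_i$ and relation~\eqref{R3MOY} gives $b_ib_{i+1}b_i+b_{i+1}=b_{i+1}b_ib_{i+1}+b_i$ on the nose, with no cup-cap terms to discard. The ``main obstacle'' you foresee---reducing diagrams with internal turn-backs to a normal form---is still the substantive step, but its conclusion is that such diagrams are already equal in the corner to combinations of the $\sigma_i$, not that they lie in an ideal to be killed.
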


\smallskip

In these quotients we identify $\figins{-6}{0.25}{4vert}$ and $\figins{-6}{0.25}{crossu}$
with the generator $b_i$ of $H_n(q)$.

\medskip

%
%%%%%%%%%%%%%%%%%%%%%%%%%%%%%%%%%%%%%%%%
%%%                                  %%%
%%%          Jaeger                  %%%
%%%                                  %%%
%%%%%%%%%%%%%%%%%%%%%%%%%%%%%%%%%%%%%%%%
\section{Jaeger's model for the BMW algebras}\label{sec:jaeger}

%%%%%%%%%%%%%%%%%%%%%%%%%%%%%%%%%%%%%%%%%%%%%%%%%%%%%%%%%%%%
\subsection{Jaeger's Theorem for the Kauffman polynomial}  %
%%%%%%%%%%%%%%%%%%%%%%%%%%%%%%%%%%%%%%%%%%%%%%%%%%%%%%%%%%%%

Given any unoriented link $L$, Jaeger found a beautiful formula computing the 2-variable Kauffman polynomial 
$F$ of $L$ as a weighted sum of HOMFLY-PT polynomials $P$ of certain oriented links associated with $L$. 
We now explain how to obtain a family of oriented links diagrams 
to which we compute the HOMFLY-PT polynomial and recall Jaeger's formula.

\medskip

The HOMFLY-PT link polynomial $P=P(a,q)$ is the unique invariant of oriented  links satisfying the skein relation
\begin{equation*}
a P\biggl(\figins{-10}{0.35}{Xing-pu}\biggr) \ \ - \ \ 
a^{-1}P\biggl(\figins{-10}{0.35}{Xing-nu}\biggr)
=\ \ (q - q^{-1})P \biggl(
\figins{-10}{0.35}{upup}
\biggr)
\end{equation*}
and taking the value $[a]$ on the unknot,
\begin{equation*}
P\biggl(\figins{-8}{0.30}{loop-r}\biggr) 
=\  [a] 
\end{equation*}
(recall $[a]= \tfrac{a-a^{-1}}{q-q^{-1}}$ was defined in Subsection~\ref{ssec:bmw}).

The 2-variable Kauffman polynomial $F=F(a,q)$ is the unique invariant of framed unoriented 
links satisfying the relations 
\begin{gather*}
F\biggl(\figins{-10}{0.35}{ucurl-l}\biggr)\ \ 
= \ a^{-2}q\ \ 
F\biggl(\ \figins{-10}{0.35}{one}\ \biggr)
%\mspace{80mu}
%F\biggl(\figins{-10}{0.35}{ucurl-r}\biggr)\ \ 
%= \ a^{2}q^{-1}\ \ 
%\biggl(\ \figins{-10}{0.35}{one}\ \biggr)
\displaybreak[0]\\[2ex]  
F\biggl(\figins{-10}{0.35}{Xing-p}\biggr) \ \ - \ \ 
F \biggl(\figins{-10}{0.35}{Xing-n}\biggr)\
=\ \ (q - q^{-1})\Biggl(\ 
F\biggl(\figins{-10}{0.35}{cupcap}\biggr)\  -  \
F\biggl(\figins{-10}{0.35}{cupcap-vert}\biggr)\
\Biggr)
\end{gather*}
and taking the value $\delta=[a^2,-1]+1$ on the unknot,
\begin{equation*}
F\biggl(\figins{-8}{0.30}{loop}\biggr) 
=\  \delta.
\end{equation*}

\medskip

We now recall the definition of the \emph{rotational number} of an oriented link diagram. 
Given an oriented link diagram $D$, smooth all its crossings as follows
\begin{equation*}
\figins{-10}{0.35}{Xing-pu} \ \ \longrightarrow \ \ 
\figins{-10}{0.35}{upup}    \ \ \longleftarrow  \ \ 
\figins{-10}{0.35}{Xing-nu}.
\end{equation*}

The result is a collection of oriented circles embedded in the plane. 
Define the rotational number $\rot(D)$ of $D$ to be the sum over all resulting circles 
of the contribution of each circle, where a circle contributes $-1$ if it is oriented clockwise, 
and $+1$ otherwise,
\begin{equation*}
\rot\biggl(
\figins{-8}{0.30}{loop-l}
\biggr)  =\  -1 
\mspace{80mu}
\rot\biggl(
\figins{-8}{0.30}{loop-r}
\biggr)  =\  +1 .
\end{equation*}

\medskip

Given an unoriented diagram $D$ of $L$, we resolve each of its crossing in six different ways,
\begin{equation*}
\figins{-10}{0.35}{Xing-p} \ \  \mapsto\ \ 
\ \ \figins{-10}{0.35}{downup}\ \  , \ \ 
\figins{-10}{0.35}{cupcap-lr}\ \  , \ \
\figins{-10}{0.35}{Xing-pu}\ \  , \ \
\figins{-10}{0.35}{Xing-rp}\ \  , \ \
\figins{-10}{0.35}{Xing-pd}\ \  , \ \
\figins{-10}{0.35}{Xing-lp}\ \ . \ \
\end{equation*}
Choosing a resolution for each crossing, is called a complete resolution.
A complete resolution resulting in a coherently oriented link diagram is called an oriented complete resolution of $D$.
Denote by $\mbox{res}(D)$ the set of all oriented complete resolutions. Notice that if there is no crossing there are 
two resolutions which consist in the two possible orientations of an unoriented circle.
We next define a weight $w$ associated to each oriented complete resolution. 
It is computed as a product of local weights associated to each crossing of $D$ and its oriented resolution. 
The local weights are
 \begin{equation*}
w\left( \figins{-10}{0.35}{Xing-p},\figins{-10}{0.35}{downup}\\ \right)=q-q^{-1}\ \ , \ \
w\left( \figins{-10}{0.35}{Xing-p},\figins{-10}{0.35}{cupcap-lr}\\ \right)=q^{-1}-q\ \ , \ \
w\left( \figins{-10}{0.35}{Xing-p},\figins{-10}{0.35}{Xing-pu}\\ \right)=1\ \ , \ \
\end{equation*}
\begin{equation*}
w\left( \figins{-10}{0.35}{Xing-p},\figins{-10}{0.35}{Xing-lp}\\ \right)=1\ \ , \ \ 
w\left( \figins{-10}{0.35}{Xing-p},\figins{-10}{0.35}{Xing-pd}\\ \right)=1\ \ , \ \
w\left( \figins{-10}{0.35}{Xing-p},\figins{-10}{0.35}{Xing-rp}\\\right)=1\ \ , \ \
\end{equation*}

\medskip

Jaeger's formula~\cite{kauff-phys} is given in the following
\begin{thm}\label{thm:jaeg-link}
Let $D$ be an unoriented diagram of a link $L$.
The formula
\begin{equation}
\label{eq:jaeg-poly}
F(D)(a,q)=\sum_{\overrightarrow{D}\in{\res}(D)}{(a^{-1}q)}^{{\rot}(\overrightarrow{D})}w(\overrightarrow{D})P(\overrightarrow{D})
\end{equation}
is a HOMFLY-PT expansion of the Kauffman polynomial of $L$.
\end{thm}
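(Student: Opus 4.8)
The plan is to verify that the right-hand side of~\eqref{eq:jaeg-poly} satisfies the three defining properties of the Kauffman polynomial: invariance under regular isotopy (Reidemeister II and III), the framing/curl relation, the Kauffman skein relation, and the correct value on the unknot. Since $F$ is uniquely characterized by these, checking them suffices. Throughout I write $J(D)=\sum_{\overrightarrow{D}\in\res(D)}(a^{-1}q)^{\rot(\overrightarrow{D})}w(\overrightarrow{D})P(\overrightarrow{D})$ for the proposed expansion, and I will use only that $P$ is an invariant of oriented links satisfying the HOMFLY-PT skein relation and $P(\text{unknot})=[a]$. The key bookkeeping device is the local weight $w$ together with the rotational correction $(a^{-1}q)^{\rot}$; the point of the latter is to absorb the Reidemeister~I defect of HOMFLY-PT so that the combination becomes a regular-isotopy invariant with the prescribed curl coefficient $a^{-2}q$.

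First I would treat the unknot: an unoriented circle has exactly two complete resolutions, the two orientations, each with $w=1$, rotational numbers $+1$ and $-1$, and $P=[a]$ on each. Hence $J(\text{unknot})=(a^{-1}q)[a]+(a^{-1}q)^{-1}[a]=(a^{-1}q+aq^{-1})[a]=[a^2,-1]+1=\delta$, as required. Next, the curl relation: resolving the single crossing of a positive curl $\figins{-10}{0.35}{ucurl-l}$ in the six ways, four of the six resolutions contribute with $w=1$ and are, as oriented diagrams, a Reidemeister~I curl on an arc, while the remaining two (the two ``turnback'' resolutions $\figins{-10}{0.35}{downup}$ and $\figins{-10}{0.35}{cupcap-lr}$) produce a small disjoint circle times the straight arc, with weights $q-q^{-1}$ and $q^{-1}-q$. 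Using the HOMFLY-PT Reidemeister~I behaviour $P$ of a positive (resp. negative) kink multiplies by $a^{-1}q$ (resp. $aq^{-1}$), and tracking how $\rot$ changes by $\pm 1$ under each resolution, the four kinked terms collapse — after the $(a^{-1}q)^{\rot}$ correction — to $a^{-2}q$ times the corresponding resolution of the straight strand, while the two turnback terms cancel in pairs because their $w$'s are negatives of one another and their $(a^{-1}q)^{\rot}$ factors agree. This gives $J(\figins{-10}{0.35}{ucurl-l})=a^{-2}q\,J(\figins{-10}{0.35}{one})$, and symmetrically for the other curl; the same turnback cancellation is what yields Reidemeister~II invariance.

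For the Kauffman skein relation I would expand $J(\figins{-10}{0.35}{Xing-p})-J(\figins{-10}{0.35}{Xing-n})$ by writing each of the two crossings in its six resolutions and matching them: the four resolutions $\figins{-10}{0.35}{Xing-pu}$, $\figins{-10}{0.35}{Xing-rp}$, $\figins{-10}{0.35}{Xing-pd}$, $\figins{-10}{0.35}{Xing-lp}$ that have $w=1$ pair up between the positive and negative crossings into honest oriented crossings of the same handedness, so their difference is governed by the HOMFLY-PT skein relation, which produces exactly the oriented smoothing $\figins{-10}{0.35}{upup}$ with the factor $(q-q^{-1})$ (and a leftover $a^{\pm1}$ that the $(a^{-1}q)^{\rot}$ normalization is designed to balance against the rotational numbers of the two turnback resolutions). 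The two turnback resolutions $\figins{-10}{0.35}{downup}$ and $\figins{-10}{0.35}{cupcap-lr}$, with weights $\pm(q-q^{-1})$, are precisely the horizontal and vertical smoothings $\figins{-10}{0.35}{cupcap}$ and $\figins{-10}{0.35}{cupcap-vert}$; combining everything yields $(q-q^{-1})\bigl(J(\figins{-10}{0.35}{cupcap})-J(\figins{-10}{0.35}{cupcap-vert})\bigr)$ on the nose. Finally, Reidemeister~III invariance of $J$ follows by the same strategy: resolve the three crossings on each side, observe that each oriented complete resolution on one side is carried to an isotopic oriented diagram on the other (the oriented resolutions of an $R3$ move being again related by oriented Reidemeister moves, which $P$ respects), and check that the weights and rotational numbers match term by term.

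The main obstacle I anticipate is precisely this last bookkeeping: organizing the $6^k$ resolutions of a tangle with $k$ crossings into matching classes and confirming that the products of local weights $w$ and the global factor $(a^{-1}q)^{\rot}$ line up so that the HOMFLY-PT relation can be applied termwise without leftover monomials. The rotational number is only additive over disjoint circles and is sensitive to how smoothings merge or split circles, so the delicate point is verifying that under each relation the change in $\rot$ exactly compensates the $a^{\pm1}$ appearing from HOMFLY-PT Reidemeister~I and skein moves. Once the $w$-values in the problem statement are taken as given, this is a finite local check at a single crossing (for the curl and skein relations) and a check over the finitely many resolution-types of an $R2$ and $R3$ tangle, so no global argument is needed — but it is the part that requires care. (Strictly, $J$ as defined depends a priori on the diagram; showing it is a regular-isotopy invariant is exactly the content of the $R2$/$R3$ checks, and the framing behaviour is the curl check, so nothing beyond these local verifications is required to conclude $J=F$.)
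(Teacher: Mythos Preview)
Your approach is correct and is exactly the strategy the paper itself outlines immediately after stating the theorem: verify directly that the right-hand side is invariant under Reidemeister~II and~III, satisfies the Kauffman skein relation and the curl relation, and takes the value $\delta$ on the unknot. The paper, however, does not carry out these checks in the tangle setting; instead it defers the proof to the algebraic reformulation in Section~\ref{ssec:jaegBMW}, proving that the Jaeger map $\psi$ on 4-valent graphs is a well-defined algebra homomorphism $\bmw_n(a,q)\to\skein_n(a,q)$ (Theorem~\ref{thm:jaegmap}) and then deducing the polynomial statement via the Kauffman--Vogel isomorphisms (Proposition~\ref{prop:jaegtang}). The underlying local computations are essentially the same --- the graph relations in Definition~\ref{kauffgraphrelation} encode exactly the Reidemeister and skein checks you describe --- but the paper's route has the advantage of exploiting the reflection/$(a,q)\mapsto(a^{-1},q^{-1})$ symmetry systematically to cut the number of orientation cases roughly in half, and of packaging the Reidemeister~III check as the single graph relation~\eqref{R3MOY} and its consequence~\eqref{R3bMOY}. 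Your direct approach is more self-contained and avoids the Kauffman--Vogel machinery, at the cost of slightly more ad~hoc bookkeeping; your own caveat about the $\rot$ accounting is well placed, and in particular your description of the curl case (``four of the six resolutions contribute'') should be tightened, since for a kink on a single strand only those local resolutions compatible with a global orientation of that strand survive.
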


The proof of this formula follows by direct computation checking that the right hand side of the equality is invariant 
under the second and the third Reidemeister moves, satisfies the 2-variable Kauffman skein relation and 
the change of framing relations, as well as the value on the unknot. 
We do not detail the proof here (see~\cite{kauff-phys}) 
because it will follow from our algebraic setting in the next section.

%%%%%%%%%%%%%%%%%%%%%%%%%%%%%%%%%%%%%%%
\subsection{Jaeger's BMW}             % 
%%%%%%%%%%%%%%%%%%%%%%%%%%%%%%%%%%%%%%%
\label{ssec:jaegBMW}                  %
%%%%%%%%%%%%%%%%%%%%%%%%%%%%%%%%%%%%%%%

We now reformulate Jaeger's formula in 
terms of an algebra homomorphism between the BMW algebra and the HOMFLY-PT skein algebra. 
To this end we use the graphical calculus of Kauffman and Vogel described in Section~\ref{sec:algebras}. 
We give explicitly the algebra homomorphisms in terms of 4-valent graphs and derive 
a proof of Jaeger's theorem in this context (see Proposition~\ref{prop:jaegtang}). 
Such a reformulation of the Jaeger expansion for graph polynomials was also explored by Wu~\cite{Wu}.

\medskip

Given an $(n,n)$ 4-valent graph $\Gamma$, we can resolve each of its vertex in eight different ways,
\begin{equation*}
\figins{-10}{0.35}{4vert} \ \  \mapsto\ \ 
\ \ \figins{-10}{0.35}{cupcap-rl}\ \  , \ \ 
\figins{-10}{0.35}{downup}\ \  , \ \
\figins{-10}{0.35}{cupcap-lr}\ \  , \ \
\figins{-10}{0.35}{updown}\ \  , \ \
\figins{-10}{0.35}{crossu}\ \  , \ \
\figins{-10}{0.35}{crossl}\ \ , \ \
\figins{-10}{0.35}{crossd}\ \  , \ \
\figins{-10}{0.35}{crossr}\ \ . \ \
\end{equation*}

The graph obtained by choosing a resolution for each vertex is called a \emph{complete resolution}.
A complete resolution resulting in a coherently oriented $(n,n)$  
graph is called an \emph{oriented complete resolution}.
Denote by $\mbox{res}(\Gamma)$ the set of all oriented complete resolutions of $\Gamma$. 
Notice that if there is no 4-valent vertex 
(i.e. $\Gamma$ consists of an embedding of $n$ arcs) there are $2^n$ 
resolutions consisting in choosing an orientation for each arc.

We now extend the concept of rotation number to 
4-valent oriented graphs of the type under consideration.
Given an oriented $(n,n)$ 4-valent graph $\Gamma$ we can apply the transformation
\begin{equation*}
\figins{-10}{0.35}{crossu} \ \ \longmapsto \ \ 
\figins{-10}{0.35}{upup}
\end{equation*}
to smooth all 4-valent vertices of $\Gamma$ and obtain a disjoint union of oriented circles and $n$ oriented arcs embedded in the plane. 
Define the rotational number $\rot(\Gamma)$ of $\Gamma$ to be the sum over all resulting circles  and arcs of 
the contribution of each circle and each arc, where a circle contributes $-1$ if it is oriented clockwise 
and $+1$ otherwise, and arcs contribute $\pm 1$  or zero according with the rules given below.
\begin{equation}\label{eq:rots}
\begin{split}
\rot\biggl(
\figins{-8}{0.30}{loop-r}
\biggr)  =\  +1 
\qquad
\rot\biggl(
\figins{-4}{0.2}{bcap-r}
\biggr)  =\  0 
\qquad
\rot\biggl(
\figins{-4}{0.2}{bcap-l}
\biggr)  =\  +1 
%\displaybreak[0]
\\[2ex]
\rot\biggl(
\figins{-8}{0.30}{loop-l}
\biggr)  =\  -1 
\qquad
\rot\biggl(
\figins{-4}{0.2}{bcup-r}
\biggr)  =\  0 
\qquad
\rot\biggl(
\figins{-4}{0.2}{bcup-l}
\biggr)  =\  -1 
\end{split}\end{equation}
In addition the rotational number of a strand going up or down is zero. 
This set of rules allows extending the concept of rotation numbers to tangle diagrams. 

The rotational number is additive with respect to the multiplicative structure of $(n,n)$
 4-valent graph given by concatenation. 
For example,
\begin{align*}
\rot\biggl( \figins{-10}{0.35}{cupcap-ll} \circ  \figins{-10}{0.35}{cupcap-rr}\biggr)
&= \rot\left(\ \figins{-19}{0.60}{cupcap-rl-loop}\ \right) 
=
\rot\biggl( \figins{-10}{0.35}{cupcap-ll}\biggr) +  \rot\biggl(\figins{-10}{0.35}{cupcap-rr}\biggr)
=\ 0 .
\end{align*}

The last concept needed in this section is the \emph{weight} $w$ 
associated to each oriented complete reso\-lution. 
This is an extension to oriented resolutions of the local weights of link diagrams
from the last subsection, which justifies the use of the same notation as before.
It is computed as a product of local weights 
associated to each 4-valent vertex of $\Gamma$ and an oriented resolution of it. 
The local weights are described below.
\begin{gather}
w\biggl( \figins{-10}{0.35}{4vert},\figins{-10}{0.35}{cupcap-rl} \biggr)=
w\biggl( \figins{-10}{0.35}{4vert},\figins{-10}{0.35}{downup} \biggr)=q^{-1}\ \ , \ \
\nonumber
\displaybreak[0]\\[2ex]\label{eq:weights-bmw}
w\biggl( \figins{-10}{0.35}{4vert},\figins{-10}{0.35}{cupcap-lr} \biggr)=
w\biggl( \figins{-10}{0.35}{4vert},\figins{-10}{0.35}{updown} \biggr)=q\ \ , \ \ 
\displaybreak[0]\\[2ex]
w\biggl( \figins{-10}{0.35}{4vert},\figins{-10}{0.35}{crossu} \biggr)=
w\biggl( \figins{-10}{0.35}{4vert},\figins{-10}{0.35}{crossl} \biggr)=
w\biggl( \figins{-10}{0.35}{4vert},\figins{-10}{0.35}{crossd} \biggr)= 
w\biggl( \figins{-10}{0.35}{4vert},\figins{-10}{0.35}{crossr} \biggr)=1\ \ . \ \
\nonumber
\end{gather}

\medskip

For any $(n,n)$  4-valent graph $\Gamma$, we define the \emph{Jaeger homomorphism} as
\begin{equation}\label{eq:jaeger-alg}
\psi(\Gamma)=\sum_{\overrightarrow{\Gamma}\in \res(\Gamma)}{(a^{-1}q)}^{\rot(\overrightarrow{\Gamma})}
w(\overrightarrow{\Gamma})\overrightarrow{\Gamma}.
\end{equation}

The content of the next theorem is to prove that the previous expression defines a well-defined 
injective morphism of algebras from $\bmw_n(a,q)$  to $\skein_n(a,q)$.

\begin{thm}
\label{thm:jaegmap}
The map $\psi$ from $\bmw_n(a,q)$  to $\skein_n(a,q)$ is a well-defined injective morphism of algebra.
\end{thm}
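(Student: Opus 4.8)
The plan is to verify the three claims in turn: (i) $\psi$ is well-defined, i.e.\ it respects the defining local relations of $\bmw_n(a,q)$ listed in Definition~\ref{kauffgraphrelation}; (ii) $\psi$ is a homomorphism of algebras; (iii) $\psi$ is injective.

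For well-definedness, first note that $\psi$ is visibly local: since $\rot$ is additive over concatenation and disjoint union, and $w$ is multiplicative over the vertices, the weighted sum $\psi(\Gamma)$ factors through independent resolutions at each $4$-valent vertex. Hence it suffices to check each relation of Definition~\ref{kauffgraphrelation} after expanding every vertex on both sides into its eight oriented resolutions, multiplying by the corresponding $(a^{-1}q)^{\rot}w$, and collecting terms according to $n_\pm$-degree (using the decomposition~\eqref{eq:skeingrad}). Within each graded piece one reduces the resulting $\skein_n(a,q)$-expressions using the relations~\eqref{R1MOY}--\eqref{SPLITMOY} (and their consequence~\eqref{R3bMOY}). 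The curl relations reduce to~\eqref{R1MOY} together with the clockwise/counterclockwise $\rot$-values~\eqref{eq:rots} and the factors $[a,-1]$, $[a]$; the digon relation reduces to~\eqref{R2aMOY} and~\eqref{R2bMOY}, with the coefficient $[a^3,-3]+1$ and $\delta=[a^2,-1]+1$ appearing exactly as the appropriate sums of $(a^{-1}q)^{\pm 1}[a]$ and $[a,-2]$ contributions; the big square relation reduces to~\eqref{R3MOY} plus~\eqref{R3bMOY}; and the two loop relations are immediate from $\rot$ of a clockwise versus counterclockwise circle and the loop value $[a]$. This is the longest but most mechanical part. Alternatively --- and more cleanly --- one can avoid checking relations directly: by Lemma~\ref{KViso} it is enough to define $\psi$ on $\bmw^{\tau}_n(a,q)$, i.e.\ on genuine framed unoriented tangles, by the same resolution formula applied crossing-by-crossing, and then observe that $\psi\circ\phi^{-1}$ is exactly Jaeger's state expansion; the fact that Jaeger's formula~\eqref{eq:jaeg-poly} descends to a well-defined map on tangles modulo the Kauffman relations~\eqref{eq:bmw-kauff} is precisely the tangle-level version of Theorem~\ref{thm:jaeg-link}, verified by checking Reidemeister~II, Reidemeister~III, the skein relation, and the framing change on the right-hand side --- the same computation Kauffman records in~\cite{kauff-phys}.

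That $\psi$ is a homomorphism is essentially free once locality is established: for $(n,n)$ $4$-graphs $\Gamma_1,\Gamma_2$, stacking and then resolving is the same as resolving and then stacking, $\res(\Gamma_1\Gamma_2)$ is in bijection with the subset of $\res(\Gamma_1)\times\res(\Gamma_2)$ whose orientations match along the middle line (the mismatched pairs give the zero product in $\skein_n(a,q)$ on both sides), and both $\rot$ and $w$ split as sums/products over the two halves; the unit $1\in\bmw_n(a,q)$ has $2^n$ resolutions summing to $\sum_{\und{\ell}} e_{\und{\ell}}=1_{\skein_n(a,q)}$, each with weight $1$ and rotation $0$. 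So $\psi(\Gamma_1\Gamma_2)=\psi(\Gamma_1)\psi(\Gamma_2)$ and $\psi(1)=1$.

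Injectivity is the real obstacle. My plan is to produce an explicit left inverse on a basis. Both algebras are free over $R$ with known bases: $\bmw_n(a,q)$ has the Kauffman--Brauer-type basis indexed by Brauer diagrams decorated by Hecke-algebra elements, and $\skein_n(a,q)$ decomposes as in~\eqref{eq:skeingrad} with each summand having its own diagrammatic basis. One checks that for a basis element $\beta$ of $\bmw_n(a,q)$ given by $k$ cups/caps and a permutation part, the expansion $\psi(\beta)$ contains, in the top $n_\pm$-degree piece $n_++n_- = n-2k$, a distinguished oriented resolution --- the ``all strands oriented upward to the extent possible'' resolution --- with a monomial coefficient in $a^{\pm1},q^{\pm1}$ that is \emph{never} cancelled by contributions coming from any other basis element $\beta'$, because a different cup/cap pattern lands in a different part of~\eqref{eq:skeingrad} or produces a strictly different underlying oriented graph. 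This gives a unitriangular change-of-basis matrix (with respect to a suitable partial order refining the $n_\pm$-grading) from the image of $\psi$ to a sub-basis of $\skein_n(a,q)$, forcing $\psi$ to be injective; equivalently one exhibits a trace or conditional-expectation map $\skein_n(a,q)\to\bmw_n(a,q)$ pairing the two bases compatibly. The positivity remark quoted in the introduction (coefficients in $\bN(a,q)$) is consistent with and supports this unitriangularity. The delicate point --- and where I would spend the most care --- is the bookkeeping that shows \emph{no} cancellation across distinct basis elements; this uses that $\rot$ and $w$ together with the $n_\pm$-grading of~\eqref{eq:skeingrad} are fine enough invariants to separate the leading terms.
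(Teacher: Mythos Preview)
Your treatment of well-definedness and of the homomorphism property follows the same lines as the paper: locality via additivity of $\rot$ and multiplicativity of $w$, then a case-by-case check of the Kauffman--Vogel relations against the MOY relations~\eqref{R1MOY}--\eqref{SPLITMOY} and~\eqref{R3bMOY}. The paper organises the case analysis by the possible orientations of the boundary and exploits the symmetry under simultaneous reflection and $(a,q)\mapsto(a^{-1},q^{-1})$ to halve the work, which you might adopt. Your ``alternative'' route through Lemma~\ref{KViso} and Jaeger's tangle formula is logically fine if one imports Jaeger's theorem from~\cite{kauff-phys}; note however that in the paper's internal logic Proposition~\ref{prop:jaegtang} is \emph{derived} from Theorem~\ref{thm:jaegmap}, so invoking it here would be circular within this paper.

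For injectivity your approach diverges genuinely from the paper's. The paper does not use a basis of $\bmw_n(a,q)$ at all: it lifts $\psi$ to a map $\hat\psi\colon F_n\to G_n$ between the \emph{free} $R$-modules on isotopy classes of unoriented (resp.\ oriented) $(n,n)$ $4$-graphs, filters by the number of $4$-valent vertices, and observes that the top-vertex-count part of $\hat\psi(\Gamma)$ is simply the sum over all orientations of $\Gamma$ --- hence distinct unoriented graphs map to linearly independent families, and $\hat\psi$ is injective. One then argues that the kernel of $G_n\to\skein_n(a,q)$ meets $\hat\psi(F_n)$ exactly in the image of the $\bmw$-relations, so injectivity descends. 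Your unitriangular/leading-term strategy is a reasonable alternative in principle, but as written it has gaps: you conflate $n_\pm=n_+-n_-$ (the grading actually used in~\eqref{eq:skeingrad}) with $n_++n_-$; the ``distinguished resolution'' is not specified precisely enough to see why it is never hit by any other basis element; and the non-cancellation claim, which you correctly flag as the delicate point, really does require a careful argument (different Brauer diagrams with the same through-strand count can a priori produce the same oriented graph after resolution). The free-algebra filtration avoids all of this bookkeeping, since forgetting the orientation recovers the original unoriented graph and makes linear independence automatic --- that is what the paper's approach buys you.
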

\begin{proof}
We first prove that $\psi$ is well-defined. 
Notice that the expression $\psi(\Gamma)$ is invariant on the isotopy class of $\Gamma$. 
This follows from the observations that the rotational factor and the local weights are invariant by planar isotopies.
%First notice that since the local weights are the same rotating the resolution by $90$ it implies that $\psi(\Gamma)$ 
%does not depend on the graph $\Gamma$ up to planar isotopies\comment{what?}. 
Secondly we have to check that the relations in  Definition~\ref{kauffgraphrelation} are
in the kernel of $\psi$. 
In order to simplify the computations we consider some symmetries of the relations in Definition~\ref{kauffgraphrelation} 
as well as some symmetries of the local weights. 
All relations are invariant by 
reflections as well as by simultaneous changes of variables from $a$ to $a^{-1}$ and $q$ to $q^{-1}$. 
In addition, the 
rotational factor and the local weights are invariant by simultaneously applying a reflection of the 
plane and the previous changes of variables. 
%This is also the case of the local weights.
This implies that for any $(n,n)$ 4-valent graph $\Gamma$, $\psi(\Gamma)$ is 
invariant by simultaneously applying a reflection of the plane and the previous changes of variables.
Hence in order to check that the relations in Definition~\ref{kauffgraphrelation} are sent to zero by $\psi$ we can restrict our 
verifications to some cases depending on fixing the orientations of the boundary of the graphs involved. 
In addition notice also that by the locality of the relations in $\skein_n(a,q)$, two elements of $\skein_n(a,q)$ 
which do not have 
the same orientations on the endpoints are linearly independent.

We check the first relation which is
\begin{equation*}
\figins{-13}{0.45}{ukink-r}\
= \ 
[a](q^2a^{-1}+q^{-2}a)\ \ 
\figins{-13}{0.45}{uone-bendl}\ .
\end{equation*}
The image of the left hand side by $\psi$  contains $6$ terms, in three of them the orientation is upwards.
By the previous considerations, we can restrict attention to the case for instance where the orientation is upwards. 
The three graphs involved are
\begin{equation}\label{eq:threegraphs}
q^{-1}\figins{-13}{0.45}{bendedarc-r}
\ \ +\ \ 
q^2a^{-1}\figins{-13}{0.45}{oneuploop-lo}
\ \ +\ \ 
aq^{-1}\figins{-13}{0.45}{kinkup-r}\ .
\end{equation}

It is a matter of a short computation (using relations (\ref{R1MOY}) and (\ref{SPLITMOY})) 
to check that~\eqref{eq:threegraphs} is equal to
\begin{equation*}
[a](q^2a^{-1}+q^{-2}a)\ \ 
\figins{-13}{0.45}{oneup-bendl}
\end{equation*}
which is exactly the part of the image by $\psi$ of the right hand side with orientation upward. 
This concludes the proof of the fact that the first relation is annihilated by $\psi$.\\

For the second relation, there are three different cases to consider up to symmetry, which are the following ones:
\begin{equation*}
\figins{-21}{0.65}{box2-or1}\ ,
\qquad
\figins{-21}{0.65}{box2-or2}\quad\text{and}\quad
\figins{-21}{0.65}{box2-or3}\ .
\end{equation*}

%\comment{Doing this is the same as pre- and pos-composing the image of $\psi$ with some idempotents 
%$e_{(+,+)}\psi e_{(+,+)}$, $e_{(-,+)}\psi e_{(-,+)}$, and $e_{(-,+)}\psi e_{(+,-)}$. $e_{(,)}$ is the idempotent 
%given by 2 vertical strands pointing up or down according to the signs + and .} 
For each case one computes the contribution of each of the three terms involved in the relation.
For the first case, there are only two terms that contribute, and one obtains immediately the relation (\ref{R2aMOY}) of the 
HOMFLY-PT skein algebra.
For the second case the contributions of each term are
\begin{align*}
\figins{-19}{0.60}{udigon} &\colon\quad
q^{-2}\figins{-19}{0.60}{downupthin}
+a^{-1}q\figins{-19}{0.60}{capcup1-udig}
+a^{-1}q\figins{-19}{0.60}{capcup2-udig}
+\ \figins{-19}{0.60}{sq-udig}\ 
+a^{-2}q^4\figins{-19}{0.60}{capcuploop-udig}
\displaybreak[0]\\[2ex]
(q+q^{-1})\figins{-10}{0.35}{4vert} &\colon\quad
(q+q^{-1})a^{-1}q^2\ \figins{-10}{0.35}{cupcap-lr} 
+ q^{-1}(q+q^{-1})\ \figins{-10}{0.35}{downup} 
\displaybreak[0]\\[2ex]
\bigl([a^2,-3]+1\bigr)\figins{-10}{0.35}{cupcap} &\colon\quad 
\bigl([a^2,-3]+1\bigr)a^{-1}q\ \figins{-10}{0.35}{cupcap-lr} 
\end{align*}

It is easy to check that
\begin{equation*}
\figins{-19}{0.60}{sq-udig}\ 
+a^{-2}q^4\figins{-19}{0.60}{capcuploop-udig}\ =\
\bigl(a^{-1}q^3 + [a^2,-3]a^{-1}q\bigr) 
   \figins{-19}{0.60}{capcup3-udig}\
+\ \figins{-19}{0.60}{downupthin} 
\end{equation*}
using the relations (\ref{R2bMOY}) and (\ref{SPLITMOY}) in the HOMFLY-PT skein algebra.
The third case goes the same way, using only the relation~(\ref{R1MOY}) of the HOMFLY-PT skein algebra.\\

For the last relation, there are also up to symmetry three different cases to consider which are
\begin{equation*}
\figins{-21}{0.65}{box3-or1}\ ,
\qquad
\figins{-21}{0.65}{box3-or2}\quad\text{and}\quad
\figins{-21}{0.65}{box3-or3}\ .
\end{equation*}
The first case is exactly given by the relation~(\ref{R3MOY}) of the HOMFLY-PT skein algebra.

We detail now the second case. The contribution of each term in the equality are the following ones:
%%%%%%%%%%%%%%%%%%%%%%%%%%%%%%%%%%%%%%%%%%%%%%%%%%%%%%%%%%%%%%%%%
%%%%%%%%%%%%%%%%%%%%%%%%%%%%%%%%%%%%%%%%%%%%%%%%%%%%%%%%%%%%%%%%%
\begin{align*}
\figins{-19}{0.60}{ureid3-r} &\colon\quad
      q\ \ \figins{-19}{0.60}{updownupthin}
  +aq^{-2}\ \figins{-19}{0.60}{capcup4}
  +aq^{-2}\ \figins{-19}{0.60}{capcup5}
+a^2q^{-5}\ \figins{-19}{0.60}{capcuploop2}
\\[2ex] &\mspace{-20mu} 
+a^{-1}q^4\ \figins{-19}{0.60}{capcup6}
       +q\ \figins{-19}{0.60}{zigzag1}
       +q\ \figins{-19}{0.60}{zigzag2}
  +aq^{-2}\ \figins{-19}{0.60}{wiggle-left}
        +\ \figins{-19}{0.60}{reid3-ludu} 
\displaybreak[0]\\[2ex]
\figins{-19}{0.60}{ureid3-l} &\colon\quad
  q^{-1}\ \ \figins{-19}{0.60}{updownupthin}
 +a^{-1}q^2\ \figins{-19}{0.60}{capcup8}
 +a^{-1}q^2\ \figins{-19}{0.60}{capcup7}
 +a^{-2}q^5\ \figins{-19}{0.60}{capcuploop3}
\\[2ex] & \mspace{-45mu}
  +aq^{-4}\ \figins{-19}{0.60}{capcup5}
   +q^{-1}\ \figins{-19}{0.60}{zigzag2}
   +q^{-1}\ \figins{-19}{0.60}{zigzag1}
 +a^{-1}q^2\ \figins{-19}{0.60}{wiggle-right}
        +\ \figins{-19}{0.60}{reid3-rudu}
 \displaybreak[0]\\[2ex]
%\end{align*}
%and
%\begin{align*}
\figins{-19}{0.60}{4vertarcr-tall}\ &\colon\quad
 aq^{-2}\ \figins{-19}{0.60}{capcup9}
+   q\ \ \figins{-19}{0.60}{updownupthin}
\displaybreak[0]\\[2ex]
\figins{-19}{0.60}{4vertarcl-tall}\ &\colon\quad
 a^{-1}q^2\ \figins{-19}{0.60}{capcup6}
+ q^{-1}\ \ \figins{-19}{0.60}{updownupthin}
\displaybreak[0]\\[2ex]
\figins{-19}{0.60}{ucap-ur} &\colon\quad
 aq^{-2}\ \figins{-19}{0.60}{capcup9}
     +q\ \figins{-19}{0.60}{zigzag1}
 \displaybreak[0]\\[2ex]
\figins{-19}{0.60}{ucap-ul} &\colon\quad
 a^{-1}q^2\ \figins{-19}{0.60}{capcup6}
   +q^{-1}\ \figins{-19}{0.60}{zigzag2}
\displaybreak[0]\\[2ex]
\figins{-19}{0.60}{ucap-dr} &\colon\quad 
 aq^{-2}\ \figins{-19}{0.60}{capcup9}
     +q\ \figins{-19}{0.60}{zigzag2}
\displaybreak[0]\\[2ex]
\figins{-19}{0.60}{ucap-dl} &\colon\
  a^{-1}q^2\ \figins{-19}{0.60}{capcup6}
   +q^{-1}\ \figins{-19}{0.60}{zigzag1}
\displaybreak[0]\\[2ex]
[a^2,-4]\ \figins{-19}{0.60}{cupcaparcr-tall}  &\colon\quad
aq^{-1}[a^2,-4]\ \ \figins{-19}{0.60}{capcup9}
\displaybreak[0]\\[2ex]
[a^2,-4]\ \figins{-19}{0.60}{cupcaparcl-tall} &\colon\quad 
a^{-1}q[a^2,-4] \ \figins{-19}{0.60}{capcup6}
\end{align*}
%%%%%%%%%%%%%%%%%%%%%%%%%%%%%%%%%%%%%%%%%%%%%%%%%%%%%%%%%%%%%%%%%
%%%%%%%%%%%%%%%%%%%%%%%%%%%%%%%%%%%%%%%%%%%%%%%%%%%%%%%%%%%%%%%%%
Among them, many cancel directly and we are left to check that
\begin{gather*}
\mspace{-80mu}
\figins{-19}{0.60}{reid3-ludu}\
+\ a^2q^{-5}\ \figins{-19}{0.60}{capcuploop2}
+\ a^{-1}q^4\ \figins{-19}{0.60}{capcup6}
+\ a^{-1}q[a^2,-4]\ \figins{-19}{0.60}{capcup6}
\\[2ex]
\mspace{80mu}
=\ \figins{-19}{0.60}{reid3-rudu}\
+\ a^{-2}q^{5}\ \figins{-19}{0.60}{capcuploop3}
+\ aq^{-4}\ \figins{-19}{0.60}{capcup9}
+\ aq^{-1}[a^2,-4]\ \figins{-19}{0.60}{capcup9}
\end{gather*}
%%%%%%%%%%%%%%%%%%%%%%%%%%%%%%%%%%%%%%%%%%%%%%%%%%%%%%%%%%%%%%%%%
%%%%%%%%%%%%%%%%%%%%%%%%%%%%%%%%%%%%%%%%%%%%%%%%%%%%%%%%%%%%%%%%%
The equality follows from additional relation (\ref{R3bMOY}) in HOMFLY-PT skein algebra.
The third case goes the same way using only the relation (\ref{R1MOY}) of the HOMFLY-PT skein algebra.
Hence $\psi$ is well defined.
The fact that $\psi$ is an algebra homomorphism follows from the fact that the rotational number 
is additive with respect to the multiplicative structure of $\skein_n(a,q)$ and the weight is multiplicative with 
respect to the multiplicative structure of 
$\skein_n(a,q)$.

We are left to prove $\psi$ is injective. Consider first the morphism of algebra $\hat{\psi}$ which is the same 
as $\psi$ but defined on the 
free algebras. In other words $\hat{\psi}$ goes from the $R$-algebra $F_n$ generated by isotopy classes of 
unoriented $(n,n)$ 4-valent graphs to the $R$-algebra $G_n$ generated by isotopy classes of 
oriented $(n,n)$ 4-valent graphs.  
We  now show that $\hat{\psi}$ is injective and descends to an injective morphism between 
the quotients $\bmw_n(a,q)$ and $\skein_n(a,q)$.
To this end we assume that it is injective on the subalgebra of $F_n$ generated
by the diagrams in $F_n$ containing less than $m$ vertices and
consider a linear combination 
$\sum_{i=1 }^k a_i \Gamma_i$ of elements of $F_n$, 
with each term having $m$ $4$-valent vertices.
Then for each $\Gamma_i$ consider the projection $\hat{\Gamma_i}$ of $\hat{\psi}(\Gamma_i)$ to the subvector 
space of $G_n$ generated by $(n,n)$ 4-valent graphs  with exactly $m$ vertices. Notice that $\hat{\Gamma_i}$ 
consists of a weight direct sum of graphs which are obtained by choosing an orientation for each arc in $\Gamma_i$, 
each of these graphs being linearly independent of the others.
In addition for any given choice of an orientation of the endpoints of the $\Gamma_i$'s, the graphs are linearly independent. 
It follows that if 
$\sum_{i=1 }^k a_i \Gamma_i=0$ then $a_i=0$ for all $i=1,\ldots,k$, hence $\hat{\psi}$ sends linearly independent
elements of $F_n$ to linearly independent elements of $G_n$ and hence is injective. 
Consider now the morphism from $\hat{\psi}(F_n)$ to $\bmw_n(a,q)$ using the projection $\pi$ of $F_n$ to $\bmw_n(a,q)$. 
The previous proof of the fact that $\psi$ is well defined implies that the kernel of $\pi$ is exactly 
the intersection of the relations in $\skein_n(a,q)$ with $\hat{\psi}(F_n)$. 
The image through $\hat{\psi}$ of 
the relations defining $\bmw_n(a,q)$ are exactly the relations of $\skein_n(a,q)$. 
This concludes the proof of the injectivity of $\psi$.
\end{proof}
%
%
%\begin{equation*}
%\labellist
%\tiny\hair 2pt
%\pinlabel $i$ at  -4 -10
%\pinlabel $j$ at 134 -10
%\endlabellist
%\figins{-15}{0.55}{4vert}\vspace{1.5ex}
%\end{equation*}
%
%%%%%%%%%%%%%%%%%%%%%%%%%%%%%%%%%%%%%%%%%%%%%%%%%%%%%%%%%%%%%%%%%%%%%%%%%%%
%%%%%%%%%%%%%%%%%%%%%%%%%%%%%%%%%%%%%%%%%%%%%%%%%%%%%%%%%%%%%%%%%%%%%%%%%%%
%
In the sequel we will make use of the explicit form of Jaeger's homomorphism,
given on the generators of $\bmw_n(a,q)$ by
\begin{align*}
\figins{-10}{0.35}{cupcap}\ 
&\overset{\psi}{\longmapsto}\ \
\figins{-10}{0.35}{cupcap-ll}\ \ +\ \
\figins{-10}{0.35}{cupcap-rr}\ \ +\ \  a^{-1}q\
\figins{-10}{0.35}{cupcap-lr}\ \ +\ \  a q^{-1}\
\figins{-10}{0.35}{cupcap-rl}
\\[2ex]
\figins{-10}{0.35}{4vert}\
&\overset{\psi}{\longmapsto}\ \
q^{-1}\biggl(a q^{-1}\
\figins{-10}{0.35}{cupcap-rl}\ \ +\ \
\figins{-10}{0.35}{downup}\ \ \biggr)
+ q\biggl( a^{-1}q
\figins{-10}{0.35}{cupcap-lr}\ \ +\ \
\figins{-10}{0.35}{updown}\ \ \biggr)
\\[2ex] 
& \mspace{60mu} + \ \
\figins{-10}{0.35}{crossu}\ \ + \ \
\figins{-10}{0.35}{crossl}\ \ + \ \
\figins{-10}{0.35}{crossd}\ \ + \ \
\figins{-10}{0.35}{crossr}
\\[2ex]
\figins{-10}{0.35}{one}\quad\ 
&\overset{\psi}{\longmapsto}\ \ \
\figins{-10}{0.35}{one-u}\ \ \ + \ \ \ 
\figins{-10}{0.35}{one-d}
\end{align*}
Notice that all the coefficients of the expansion of the generators of $\bmw_n(a,q)$ are in
$\bN(a,q)$. From the description of the HOMFLY-PT skein algebra in Subsection~\ref{ssec:skein} 
we conclude that, under $\psi$, any diagram in $\bmw_n(a,q)$ 
can be written as a linear combination of diagrams in $\skein_n(a,q)$ with coefficients in $\bN(a,q)$.

\medskip

%%%%%%%%%%%%%%%%%%%%%%%%%%%%%%%%%%%%%%%%%%%%%%%%%%%%%%
%%%%%%%%%%%%%%%%%%%%%%%%%%%%%%%%%%%%%%%%%%%%%%%%%%%%%%

Jaeger's homomorphism from Equation~\eqref{eq:jaeger-alg} induces the Jaeger's formula for polynomials of 
Equation~\eqref{eq:jaeg-poly} in the following sense.
The maps $f_{\bmw}\colon\bmw_n^\tau(a,q)\to\bmw_n(a,q)$ given by
\begin{equation*}
\figins{-10}{0.35}{Xing-p}\ \mapsto\
q\ \figins{-10}{0.35}{cupcap}\  -\  
\figins{-10}{0.35}{4vert}\ +\
q^{-1}\ \figins{-10}{0.35}{cupcap-vert}
\end{equation*}
and 
$f_{\skein}\colon\skein_n^\tau(a,q)\to\skein_n(a,q)$ given by
\begin{equation*}
\figins{-10}{0.35}{Xing-pu}\ \mapsto\  
a^{-1}q\ \figins{-10}{0.35}{upup}\ -\ 
a^{-1}\figins{-10}{0.35}{crossu} 
\qquad\text{and}\qquad
\figins{-10}{0.35}{Xing-nu}\ \mapsto\
aq^{-1}\ \figins{-10}{0.35}{upup}\ -\ 
a\figins{-10}{0.35}{crossu}
\end{equation*}
can be used to obtain a version of Jaeger's homomorphism in terms of the tangle 
algebras $\bmw_n^\tau(a,q)$ and $\skein_n^\tau(a,q)$ using the following procedure.
%(taking into account the rotation numbers given in Equation~\eqref{eq:rots}).
Inverting $f_{\skein}$,
\begin{equation*}
\figins{-10}{0.35}{crossu} \ \xmapsto{f_{\skein}^{-1}}\
q\ \figins{-10}{0.35}{upup}\ -\ 
a\figins{-10}{0.35}{Xing-pu}\ =\
q^{-1}\ \figins{-10}{0.35}{upup}\ -\ 
a^{-1}\figins{-10}{0.35}{Xing-nu}
\end{equation*}
we define $\psi^\tau:=f_{\skein}^{-1}\psi f_{\bmw}\colon\bmw_n^\tau(a,q)\to\skein_n^\tau(a,q)$.

\medskip

The relations imposed on the BMW and HOMFLY-PT skein algebras imply that 
closed diagrams in $\bmw^\tau_n(a,q)$ and $\skein_n^\tau(a,q)$ reduce to polynomials, 
which coincide with the Kauffman or HOMFLY-PT polynomials respectively.

\begin{prop}\label{prop:jaegtang}
For a closed tangle diagram $D$ we have that $\psi^\tau(D)$ coincides with the Jaeger 
expansion for link polynomials in Theorem~\ref{thm:jaeg-link}. 
\end{prop}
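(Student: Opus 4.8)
The plan is to reduce the statement about closed diagrams to the already-established algebra homomorphism $\psi$ together with the explicit dictionaries $f_{\bmw}$, $f_{\skein}$, and $f_{\skein}^{-1}$ relating the tangle presentations to the $4$-valent-graph presentations. First I would observe that by definition $\psi^\tau = f_{\skein}^{-1}\circ\psi\circ f_{\bmw}$, so for a closed tangle diagram $D\in\bmw^\tau_n(a,q)$ (which we may take to be a $(0,0)$-diagram, i.e.\ an unoriented link) it suffices to track what each of the three maps does. Applying $f_{\bmw}$ replaces every crossing of $D$ by the local combination $q\,\figins{-6}{0.25}{cupcap} - \figins{-6}{0.25}{4vert} + q^{-1}\,\figins{-6}{0.25}{cupcap-vert}$; expanding this over all crossings writes $f_{\bmw}(D)$ as a sum of $4$-valent graphs $\Gamma$ (with some arcs), each weighted by a monomial in $q,q^{-1}$ recording which of the three local choices was made at each crossing. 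The key bookkeeping point is that the three coefficients $q$, $-1$, $q^{-1}$ in $f_{\bmw}$ are precisely the coefficients appearing in Kauffman--Vogel's state sum and match, crossing by crossing, the six weights $q-q^{-1}$, $q^{-1}-q$, $1,1,1,1$ of Theorem~\ref{thm:jaeg-link} once one further expands the vertex resolutions.

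Next I would apply $\psi$ to each resulting closed graph $\Gamma$. By Equation~\eqref{eq:jaeger-alg}, $\psi(\Gamma)=\sum_{\overrightarrow{\Gamma}\in\res(\Gamma)}(a^{-1}q)^{\rot(\overrightarrow\Gamma)}w(\overrightarrow\Gamma)\,\overrightarrow\Gamma$, where now each $\overrightarrow\Gamma$ is a closed \emph{oriented} $4$-valent graph. The main computation is to compose the two expansions: a term in $\psi^\tau(D)$ is indexed by a choice, at each crossing of $D$, of one of the six oriented local pictures
\begin{equation*}
\figins{-8}{0.3}{downup}\, ,\quad
\figins{-8}{0.3}{cupcap-lr}\, ,\quad
\figins{-8}{0.3}{Xing-pu}\, ,\quad
\figins{-8}{0.3}{Xing-rp}\, ,\quad
\figins{-8}{0.3}{Xing-pd}\, ,\quad
\figins{-8}{0.3}{Xing-lp}\, ,
\end{equation*}
exactly the six resolutions defining $\res(D)$ in Theorem~\ref{thm:jaeg-link}. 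I would verify that the accumulated coefficient of such a term factors as a product over crossings of the local weights $w$ of Theorem~\ref{thm:jaeg-link} (the $q-q^{-1}$, $q^{-1}-q$, and $1$'s), because the $q^{\pm1}$ coming from $f_{\bmw}$ combine with the $q^{\pm1}$ coming from the vertex-resolution weights~\eqref{eq:weights-bmw} of $\psi$ to produce exactly these; and that the collected $(a^{-1}q)^{\rot}$ factor from $\psi$ matches $(a^{-1}q)^{\rot(\overrightarrow D)}$, using that $\rot$ is additive under concatenation and is computed from the same smoothing rule in both settings. Finally, $f_{\skein}^{-1}$ converts each closed oriented $4$-valent graph $\overrightarrow\Gamma$ back into a $\bC(a,q)$-linear combination of oriented \emph{links}; but since we started from a single crossing resolution that is already an honest oriented link diagram $\overrightarrow D$ when all six oriented pictures are links, the composite simply reproduces $P(\overrightarrow D)$ upon evaluating in $\skein^\tau_n(a,q)$ (closed diagrams there evaluate to the HOMFLY-PT polynomial, as recalled just before the statement). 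Summing over all resolutions yields $\psi^\tau(D)=\sum_{\overrightarrow D\in\res(D)}(a^{-1}q)^{\rot(\overrightarrow D)}w(\overrightarrow D)P(\overrightarrow D)$, which is the Jaeger formula~\eqref{eq:jaeg-poly}.

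The step I expect to be the main obstacle is the coefficient bookkeeping in the middle: one must check that expanding $f_{\bmw}$ first and then $\psi$ gives, after collecting terms, precisely the six-fold resolution with the six weights of Theorem~\ref{thm:jaeg-link}, with no spurious cross-terms surviving and no sign or power-of-$q$ discrepancy. Concretely, each crossing of $D$ passes through an intermediate $4$-valent vertex (or a smoothing) and then a further oriented resolution, so a priori there are $3\times 8$ local possibilities; I would organize the argument by showing that for each of the six oriented outcomes the intermediate sum telescopes to the single weight $w$ of Theorem~\ref{thm:jaeg-link} — e.g.\ the oriented resolution $\figins{-8}{0.3}{downup}$ arises both from the $\figins{-6}{0.25}{cupcap}$-term (weight $q$, vertex weight $q^{-1}$, giving $+1$) and from the $\figins{-6}{0.25}{4vert}$-term (weight $-1$, vertex weight $q^{-1}\cdot$ sign), summing to $q-q^{-1}$, and so on. Once this local identity is in hand, multiplicativity of $w$ and additivity of $\rot$ (both already noted in the proof of Theorem~\ref{thm:jaegmap}) upgrade it to the global identity for closed diagrams, completing the proof.
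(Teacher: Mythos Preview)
Your overall strategy --- unfold $\psi^\tau=f_{\skein}^{-1}\psi f_{\bmw}$ crossing by crossing and match it against Jaeger's six local resolutions --- does not go through, because the local telescoping you assert is false. Take a positive crossing and restrict to the sector where both strands point upward. From $f_{\bmw}$ only the $-\figins{-6}{0.25}{4vert}$ and $q^{-1}\figins{-6}{0.25}{cupcap-vert}$ terms survive, and $\psi$ on these gives $-\figins{-6}{0.25}{crossu}+q^{-1}\figins{-6}{0.25}{upup}$. Applying $f_{\skein}^{-1}$ (which sends $\figins{-6}{0.25}{crossu}\mapsto q\,\figins{-6}{0.25}{upup}-a\,\figins{-6}{0.25}{Xing-pu}$) yields
\[
\psi^\tau\bigl(\figins{-6}{0.25}{Xing-p}\bigr)\big|_{\text{up--up}}
= a\,\figins{-6}{0.25}{Xing-pu}+(q^{-1}-q)\,\figins{-6}{0.25}{upup},
\]
whereas Jaeger's local rule for this boundary orientation is simply $1\cdot\figins{-6}{0.25}{Xing-pu}$. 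These are not equal in $\skein_n^\tau(a,q)$, so the ``for each of the six oriented outcomes the intermediate sum telescopes to the single weight $w$'' step fails. Your sample computation for $\figins{-6}{0.25}{downup}$ is likewise off: that diagram cannot arise from the horizontal $\figins{-6}{0.25}{cupcap}$ term at all (it has the wrong boundary), and if you track the two genuine contributions (from $-\figins{-6}{0.25}{4vert}$ and from $q^{-1}\figins{-6}{0.25}{cupcap-vert}$) they cancel rather than sum to $q-q^{-1}$.

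The paper gives no explicit proof; the sentence immediately preceding the proposition is the whole argument. The point is that closed diagrams in $\bmw_n^\tau(a,q)$ and $\skein_n^\tau(a,q)$ are scalars --- the Kauffman and HOMFLY--PT polynomials respectively --- so for closed $D$ one has $D=F(D)\cdot 1$ in $\bmw_0^\tau(a,q)$, and since $\psi^\tau$ is an $R$-algebra map it sends this to $F(D)\cdot 1$ in $\skein_0^\tau(a,q)$. Thus $\psi^\tau(D)=F(D)$, which is what the Jaeger expansion computes by Theorem~\ref{thm:jaeg-link}. The promised ``algebraic'' proof of Theorem~\ref{thm:jaeg-link} itself is not a separate local identity but rather the observation that the relation-checks carried out in the proof of Theorem~\ref{thm:jaegmap}, transported through the isomorphisms $f_{\bmw}$ and $f_{\skein}$, are precisely the verification that Jaeger's sum satisfies the Kauffman skein relation and the framing and unknot normalizations. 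So the proposition is a corollary of well-definedness, not of a crossing-by-crossing bookkeeping identity.
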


%
%%%%%%%%%%%%%%%%%%%%%%%%%%%%%%%%%%%%%%%%
%%%                                  %%%
%%%         Embeddings               %%%
%%%                                  %%%
%%%%%%%%%%%%%%%%%%%%%%%%%%%%%%%%%%%%%%%%
%\section{The BMW-embedding in $q$-Schur}\label{sec:emb}
\section{Setting $a=q^N$: embeddings and projections}\label{sec:emb}

%%%%%%%%%%%%%%%%%%%%%%%%%%%%%%%%%%%%%%%%%%%%%%%
\subsection{The $q$-Schur algebra $S_q(n,d)$} %
\label{ssec:schur}                            %
%%%%%%%%%%%%%%%%%%%%%%%%%%%%%%%%%%%%%%%%%%%%%%%

Another algebra that enters the play is the $q$-Schur algebra $S_q(n,d)$. 
In this subsection we briefly review $S_q(n,d)$ following the exposition in~\cite{MSV} 
(see~\cite{MSV} and the references therein for more details). 
The $q$-Schur algebra appears naturally in the context of (polynomial) representations of ${U}_q(\mathfrak{gl}_n)$,
which is the starting point of this subsection.

Let 
$\epsilon_i=(0,\ldots,1,\ldots,0)\in \bZ^n$, with $1$ being on the $i$th 
coordinate for $i=1,\ldots,n$.
Let also
$\alpha_i=\epsilon_i-\epsilon_{i+1}\in\bZ^{n}$ 
and $(\epsilon_i,\epsilon_j)=\delta_{i,j}$ be the Euclidean inner product on $\bZ^n$. 
\begin{defn}
The quantum general linear algebra ${U}_q(\mathfrak{gl}_n)$ is the 
associative unital $\bQ(q)$-algebra generated by $K_i,K_i^{-1}$, for $i=1,\ldots, n$, 
and $E_{\pm i}$, for $i=1,\ldots, n-1$, subject to the relations
\begin{gather*}
K_iK_j=K_jK_i\quad K_iK_i^{-1}=K_i^{-1}K_i=1
\\
E_iE_{-j} - E_{-j}E_i = \delta_{i,j}\dfrac{K_iK_{i+1}^{-1}-K_i^{-1}K_{i+1}}{q-q^{-1}}
\\
K_iE_{\pm j}=q^{\pm (\epsilon_i,\alpha_j)}E_{\pm j}K_i
\\
E_{\pm i}^2E_{\pm j}-(q+q^{-1})E_{\pm i}E_{\pm j}E_{\pm i}+E_{\pm j}E_{\pm i}^2=0
\qquad\text{if}\quad |i-j|=1
\\
E_{\pm i}E_{\pm j}-E_{\pm j}E_{\pm i}=0\qquad\text{if}\quad |i-j|<1.
\end{gather*} 
\end{defn}

Let $V$ be the natural $n$ dimensional representation of ${U}_q(\mathfrak{gl}_n)$ and $d$
a non-negative integer.
There is a natural action of ${U}_q(\mathfrak{gl}_n)$ on $V^{\otimes d}$ with weights being the elements in
\begin{equation*}
\Lambda(n,d) =
\{ \lambda\in\bN^n\colon\sum_i\lambda_i = d \}
\end{equation*}
and highest weights the elements in
\begin{equation*}
\Lambda^+(n,d) =
\{ \lambda\in\Lambda(n,d)\colon\lambda_1 \geq \lambda_2\geq\dotsc \geq \lambda_n \},
\end{equation*}
the latter corresponding exactly to the irreducibles $V_\lambda$ that show up in the decomposition of $V^{\otimes d}$.
We denote by $\psi_{n,d}$ the representation above.

We can then define $S_q(n,d)$ as follows:
\begin{defn}
The $q$-Schur algebra $S_q(n,d)$ is the image of the representation $\psi_{n,d}$,
$$S_q(n,d) = \psi_{n,d}({U}_q(\mathfrak{gl}_n)).$$ 
\end{defn}

\smallskip

It is well-known that there is an action of the Iwahori-Hecke algebra $H_d(q)$ 
on $V^{\otimes d}$ commuting with the action of ${U}_q(\mathfrak{gl}_n)$.
%and if $d\leq n$
%\begin{equation*}
%H_q(d) \cong \End_{{U_q}(\mathfrak{gl}_n)}(V^{\otimes d}) 
%\end{equation*}
As a matter of fact, we have
\begin{equation*}
S_q(n,d) \cong \End_{H_d(q)}(V^{\otimes d}) , 
\end{equation*}
and this may also be used to define the $q$-Schur algebra as a centralizer algebra.

For each $\lambda\in\Lambda^+(n,d)$, the 
${U}_q(\mathfrak{gl}_n)$-action on $V_{\lambda}$ factors through 
the projection $\psi_{n,d}\colon {U}_q(\mathfrak{gl}_n)\to S_q(n,d)$. 
This way we obtain all irreducible representations of $S_q(n,d)$. Note that 
this also implies that all representations of $S_q(n,d)$ have a 
weight decomposition. 
It is well known that 
$$S_q(n,d)\cong \prod_{\lambda\in\Lambda^+(n,d)}\End(V_{\lambda}),$$
and therefore $S_q(n,d)$ is a finite-dimensional split semi-simple 
unital algebra. %Its dimension is equal to $$\sum_{\lambda\in\Lambda^+(n,d)}\dim(V_{\lambda})^2=\binom{n^2+d-1}{d}.$$ 

Since we are only interested in weight representations we can restrict our attention
to the Beilinson-Lusztig-MacPherson idempotented version  
of ${U}_q(\mathfrak{gl}_n)$. 
It can be obtained 
from ${U}_q(\mathfrak{gl}_n)$ by adjoining orthogonal idempotents 
$1_{\lambda}$, for $\lambda\in\bZ^{n}$. We have the extra relations
\begin{align*}
1_{\lambda}1_{\nu} &= \delta_{\lambda,\nu}1_{\nu} 
\\
E_{\pm i}1_{\lambda} &= 1_{\lambda\pm{\alpha_i}}E_{\pm i}
\\
K_i1_{\lambda} &= q^{\lambda_i}1_{\lambda}.
\end{align*}
Note that $\dot{U}(\mathfrak{gl}_n)\cong \underset{\lambda,\mu\in\bZ^{n}}{\oplus} 
1_{\lambda}{U}_q(\mathfrak{gl}_n)1_{\mu}$  
is not unital because $1=\sum\limits_{\lambda\in\bZ^{n}}1_{\lambda}$ is  
an infinite sum. 
In this setting the $q$-Schur algebra occurs naturally as a quotient
of idempotented ${U}_q(\mathfrak{gl}_n)$, which happens to be very easy to describe.
Since $V^{\otimes d}$ is a weight representation, 
$\psi_{n,d}$ factors through $\dot{U}_q(\mathfrak{gl}_n)$ and we have 
$$S_q(n,d)\cong \dot{S}_q(n,d):=\psi_{n,d}(\dot{U}_q(\mathfrak{gl}_n)).$$
The kernel of $\psi_{n,d}$ is of course the ideal generated by all idempotents 
$1_{\lambda}$ such that $\lambda\not\in\Lambda(n,d)$. Thus we arrive at 
the following finite presentation of $S_q(n,d)$:

\begin{defn} 
$S_q(n,d)$ is the associative unital 
$\bQ(q)$-algebra generated by $1_{\lambda}$, for $\lambda\in\Lambda(n,d)$, 
and $E_{\pm i}$, for $i=1,\ldots,n-1$, subject to the relations
\begin{align*}
1_{\lambda}1_{\mu} &= \delta_{\lambda,\mu}1_{\lambda} 
\\[0.5ex]
\sum_{\lambda\in\Lambda(n,d)}1_{\lambda} &= 1
\\[0.5ex]
E_{\pm i}1_{\lambda} &= 1_{\lambda\pm\alpha_i}E_{\pm i}
\\[0.5ex]
E_iE_{-j}-E_{-j}E_i &= \delta_{ij}\sum\limits_{\lambda\in\Lambda(n,d)}
[\overline{\lambda}_i]1_{\lambda}
\end{align*}
where $\overline{\lambda}_i=\lambda_i - \lambda_{i+1}$. 
We use the convention that $1_{\mu}X1_{\nu}=0$, if $\mu$ 
or $\nu$ is not contained in $\Lambda(n,d)$. 
\end{defn}

\medskip

The Iwahori-Hecke algebra can be obtained as a quotient of $S_q(n,d)$ for $d\leq n$.
Let $(1)^d$ denote the weight $(1,\dotsm,1,0\dotsm,0)$ with $d$ ones followed by $n-d$ zeros.
\begin{prop}[Doty, Giaquinto~\cite{dotyg}]
For every $d\leq n$, the map $H_d(q)\to 1_{(1)^d}S_q(n,d)1_{(1)^d}$ given by 
$b_i\mapsto 1_{(1)^d}E_iE_{-i}1_{(1)^d}$ is an isomorphism.
\end{prop}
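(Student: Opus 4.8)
The plan is to use the centralizer description $S_q(n,d)\cong\End_{H_d(q)}(V^{\otimes d})$ recalled above. Fix a weight basis $v_1,\dots,v_n$ of $V$. Then $1_{(1)^d}$ is the projection onto the weight space $M:=(V^{\otimes d})_{(1)^d}$, which has basis $\{v_{w(1)}\otimes\cdots\otimes v_{w(d)}\colon w\in S_d\}$; this is exactly where the hypothesis $d\le n$ enters, since it is the condition for $(1)^d$ to be a weight of $V^{\otimes d}$, i.e. for $1_{(1)^d}\neq 0$. Since the $H_d(q)$-action commutes with that of ${U}_q(\mathfrak{gl}_n)$, the weight spaces are $H_d(q)$-submodules, so $V^{\otimes d}=M\oplus(1-1_{(1)^d})V^{\otimes d}$ as $H_d(q)$-modules and the corner algebra satisfies $1_{(1)^d}S_q(n,d)1_{(1)^d}\cong\End_{H_d(q)}(M)$.

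The first step is to identify $M$ with the left regular representation of $H_d(q)$. The $H_d(q)$-action on $V^{\otimes d}$ restricted to $M$ is the place-permutation action, and, as is standard in $q$-Schur--Weyl duality, $M$ is the permutation module corresponding to the composition $(1^d)$, which is $H_d(q)$ itself with free generator $v_1\otimes\cdots\otimes v_d$. Hence $\End_{H_d(q)}(M)\cong\End_{H_d(q)}(H_d(q))$, which is anti-isomorphic to $H_d(q)$ via $\phi\mapsto\phi(1)$; composing with the anti-automorphism of $H_d(q)$ fixing each $T_i$ (hence each $b_i$) produces an isomorphism $1_{(1)^d}S_q(n,d)1_{(1)^d}\cong H_d(q)$. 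In particular both algebras have dimension $d!$, so it suffices to exhibit a surjective homomorphism $H_d(q)\to 1_{(1)^d}S_q(n,d)1_{(1)^d}$.

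The second step is to check that the surjection is the stated one, i.e. that the operator $1_{(1)^d}E_iE_{-i}1_{(1)^d}$ on $M$ corresponds, under $M\cong H_d(q)$, to right multiplication by $b_i$. Using the coproduct on ${U}_q(\mathfrak{gl}_n)$, $E_iE_{-i}$ acts explicitly on the basis $\{v_w\}$; the computation is short because the weights $\lambda$ met along the way have $\overline\lambda_i\in\{0,\pm2\}$, so only the quantum integers $[0]=0$ and $[\pm2]=\pm(q+q^{-1})$ occur, and one reads off exactly the action of $b_i$ by right multiplication. This gives at once that $b_i\mapsto 1_{(1)^d}E_iE_{-i}1_{(1)^d}$ is a well-defined homomorphism (the images satisfy the Hecke relations because the $b_i$ do) and that it is surjective (the $b_i$ generate $H_d(q)$); with the dimension count, it is an isomorphism.

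The step I expect to be the main obstacle is this last computation: matching conventions for the coproduct on ${U}_q(\mathfrak{gl}_n)$ and for the $H_d(q)$-action on $V^{\otimes d}$ so that the truncation of $E_iE_{-i}$ to $M$ is right multiplication by $b_i$ on the nose, rather than by $b_i$ rescaled by a power of $q$, by $T_i$, or acting on the other side. A safe alternative that avoids the module-theoretic identification is a direct verification from the finite presentation of $S_q(n,d)$: one checks that $\hat b_i:=1_{(1)^d}E_iE_{-i}1_{(1)^d}$ satisfies $\hat b_i^2=(q+q^{-1})\hat b_i$, $\hat b_i\hat b_j=\hat b_j\hat b_i$ for $|i-j|>1$, and $\hat b_i\hat b_{i+1}\hat b_i+\hat b_{i+1}=\hat b_{i+1}\hat b_i\hat b_{i+1}+\hat b_i$ (only the last being laborious, via the Serre relations), proves surjectivity by inducting on the length of a monomial $1_{(1)^d}E_{\pm i_1}\cdots E_{\pm i_k}1_{(1)^d}$ and rewriting it with the commutation relations as a product of blocks $E_jE_{-j}$ each straddling the weight $(1)^d$, and concludes injectivity again from $\dim 1_{(1)^d}S_q(n,d)1_{(1)^d}=d!=\dim H_d(q)$.
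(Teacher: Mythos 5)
The paper offers no proof of this proposition --- it is quoted from Doty--Giaquinto with a citation only --- so there is no internal argument to compare against; I can only assess your proof on its own terms. Your route is the standard Schur--Weyl one and is essentially sound: the identifications $1_{(1)^d}S_q(n,d)1_{(1)^d}\cong\End_{H_d(q)}\bigl((V^{\otimes d})_{(1)^d}\bigr)$, of the weight space $(V^{\otimes d})_{(1)^d}$ with the regular representation of $H_d(q)$ (this is where $d\le n$ enters, as you say), and $\End_{H_d(q)}(H_d(q))\cong H_d(q)^{\mathrm{op}}\cong H_d(q)$ are all correct, and they reduce everything to the single computation that $1_{(1)^d}E_iE_{-i}1_{(1)^d}$ acts on the weight space as right multiplication by $b_i$. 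That computation is genuinely a $U_q(\mathfrak{gl}_2)$-computation on the two tensor factors carrying $v_i$ and $v_{i+1}$ (eigenvalues $[2][1]=q+q^{-1}$ and $0$ on the two-dimensional ``zero-weight'' space), and once it is done it simultaneously gives well-definedness, surjectivity and, by the dimension count $d!=\dim H_d(q)$, injectivity; so the logic is complete modulo the convention-matching you flag. Your ``safe alternative'' via the presentation is essentially the route Doty and Giaquinto themselves take: the quadratic relation for $\hat b_i$ follows quickly because $E_{-i}^{2}1_{(1)^d}=0$ (the weight $(1)^d-2\alpha_i$ has a negative entry, hence is killed in $S_q(n,d)$) together with $E_iE_{-i}1_\mu=E_{-i}E_i1_\mu-(q+q^{-1})1_\mu$ at $\mu=(1)^d-\alpha_i$; but be aware that in this second route the surjectivity step (rewriting an arbitrary $1_{(1)^d}E_{\pm i_1}\cdots E_{\pm i_k}1_{(1)^d}$ in terms of the $\hat b_j$) is the real content of the cited theorem and is more than a one-line induction, so as written it is the one place where your argument leans on the reference rather than replacing it.
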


\medskip
Recall the \emph{quantum factorial} and \emph{quantum binomial} which are defined by
\begin{equation*}
[\kappa]! = [\kappa][\kappa -1]\dotsc [2][1]
\mspace{30mu}\text{and}\mspace{30mu}
\qbin{\kappa}{\kappa'} = \dfrac{[\kappa]!}{[\kappa-\kappa']![\kappa']!}
\end{equation*}
respectively, for $\kappa\geq\kappa'\geq 0$.
To establish a connection between the BMW algebra and the $q$-Schur algebra we need 
the \emph{divided powers} which are defined as 
\begin{equation*}
E_{\pm i}^{(\kappa)} := \dfrac{E_{\pm i}^{\kappa}}{[\kappa]!}.
\end{equation*}

\begin{lem}
The divided powers satisfy the relations
\begin{align*}
E_{\pm i}^{(\kappa)}1_\lambda 
&=
1_{\lambda \pm \kappa i_X}E_{\pm i}^{(\kappa)}1_\lambda
\\
E_{\pm i}^{(\kappa)}E_{\pm i}^{(\ell)}1_\lambda 
&=
\qbin{\kappa+\ell}{\kappa} E_{\pm i}^{(\kappa+\ell)}1_\lambda
\\
E_i^{(\kappa)}E_{-i}^{(\ell)} 1_\lambda 
&=
\sum\limits_{t=0}^{\min(\kappa,\ell)}\qbin{\kappa-\ell+\overline{\lambda}_i}{t} E_{-i}^{(\ell-t)}E_{i}^{(\kappa-t)} 1_\lambda 
\\
E_{-i}^{(\ell)}E_{i}^{(\kappa)} 1_\lambda 
&=
\sum\limits_{t=0}^{\min(\kappa,\ell)}\qbin{-\kappa+\ell-\overline{\lambda}_i}{t} E_{i}^{(\kappa-t)}E_{-i}^{(\ell-t)} 1_\lambda 
\end{align*}
and
\begin{align*}
E_{i}^{(\kappa)}E_{j}^{(\ell)} 1_\lambda 
&=
E_{j}^{(\ell)}E_{i}^{(\kappa)} 1_\lambda 
\rlap{\qquad\quad$\vert i - j\vert \neq 0,1$}
\\
E_{\pm i}^{(\kappa)}E_{\mp j}^{(\ell)} 1_\lambda 
&=
E_{\mp j}^{(\ell)}E_{\pm i}^{(\kappa)} 1_\lambda 
\rlap{\qquad\quad$i \neq j$}
\end{align*}
\end{lem}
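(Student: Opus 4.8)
The plan is to prove each identity by reducing it to the known Serre-type relations and the commutation relation $E_iE_{-j}-E_{-j}E_i=\delta_{ij}\sum_{\lambda}[\overline{\lambda}_i]1_\lambda$ from the presentation of $S_q(n,d)$, working one weight space $1_\lambda$ at a time. The first identity, $E_{\pm i}^{(\kappa)}1_\lambda = 1_{\lambda\pm\kappa\alpha_i}E_{\pm i}^{(\kappa)}$, is immediate from iterating $E_{\pm i}1_\mu=1_{\mu\pm\alpha_i}E_{\pm i}$ exactly $\kappa$ times and dividing by $[\kappa]!$. The last two lines (the far-commutativity relations for $|i-j|\neq 0,1$ and the mixed relation $E_{\pm i}^{(\kappa)}E_{\mp j}^{(\ell)}1_\lambda=E_{\mp j}^{(\ell)}E_{\pm i}^{(\kappa)}1_\lambda$ for $i\neq j$) follow trivially by dividing the corresponding generator relations by $[\kappa]![\ell]!$, since in those cases the generators literally commute.

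The divided-power multiplication rule $E_{\pm i}^{(\kappa)}E_{\pm i}^{(\ell)}1_\lambda = \qbin{\kappa+\ell}{\kappa}E_{\pm i}^{(\kappa+\ell)}1_\lambda$ is a purely formal consequence of the definition together with the identity $\frac{[\kappa+\ell]!}{[\kappa]![\ell]!}=\qbin{\kappa+\ell}{\kappa}$; there is no relation needed here beyond associativity, so this one I would dispatch in a line. The substantive work is the two ``straightening'' identities expressing $E_i^{(\kappa)}E_{-i}^{(\ell)}1_\lambda$ and $E_{-i}^{(\ell)}E_i^{(\kappa)}1_\lambda$ as sums of products with the lowering operator to the left. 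The standard approach is induction on $\min(\kappa,\ell)$: using $E_i E_{-i}^{(\ell)} = E_{-i}^{(\ell)}E_i + E_{-i}^{(\ell-1)}\cdot(\text{something diagonal})$ obtained by commuting a single $E_i$ past $E_{-i}^{\ell}/[\ell]!$ via repeated application of the defining commutator, one then feeds this into $E_i^{(\kappa)}=E_i E_i^{(\kappa-1)}/[\kappa]$ and collects terms. The coefficient bookkeeping produces the quantum binomial $\qbin{\kappa-\ell+\overline{\lambda}_i}{t}$ after one uses the $q$-Vandermonde / Pascal identities for quantum binomials; the appearance of $\overline{\lambda}_i=\lambda_i-\lambda_{i+1}$ comes precisely from evaluating $\sum_\mu[\overline{\mu}_i]1_\mu$ on the relevant shifted weights $1_{\lambda+s\alpha_i}$, noting $\overline{(\lambda+s\alpha_i)}_i=\overline{\lambda}_i+2s$ and carefully tracking how the shift interacts with the powers of $E_{\pm i}$ already present.

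The main obstacle I anticipate is the second straightening identity, $E_{-i}^{(\ell)}E_i^{(\kappa)}1_\lambda=\sum_t\qbin{-\kappa+\ell-\overline{\lambda}_i}{t}E_i^{(\kappa-t)}E_{-i}^{(\ell-t)}1_\lambda$, together with making sure the sign conventions and weight shifts in the quantum binomial arguments are internally consistent — note the quantum binomial $\qbin{m}{t}$ with possibly negative ``top'' $m$ must be interpreted via $\qbin{m}{t}=\frac{[m][m-1]\cdots[m-t+1]}{[t]!}$, which is where errors creep in. One clean way to sidestep redoing everything is to observe that this identity is obtained from the first by applying an algebra (anti)automorphism of $S_q(n,d)$ that swaps $E_i\leftrightarrow E_{-i}$ and sends $1_\lambda\mapsto 1_{w_0\lambda}$ or uses the bar-type symmetry; but since the cleanest self-contained route is direct induction, I would simply run the same inductive scheme with the roles of raising and lowering reversed and let the sign flip in $[\overline{\mu}_i]$ under $\mu\mapsto\mu-\alpha_i$ account for the sign change $\overline{\lambda}_i\mapsto -\overline{\lambda}_i$ in the binomial argument. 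These are standard computations (they appear, e.g., in Lusztig's book for $\dot{U}_q(\mathfrak{sl}_2)$ and carry over verbatim to each $\mathfrak{sl}_2$-subalgebra of $\dot{U}_q(\mathfrak{gl}_n)$ indexed by $i$), so beyond setting up the induction carefully the proof is routine, and I would not write out the full coefficient manipulation.
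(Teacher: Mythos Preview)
Your approach is correct and is the standard one found, e.g., in Lusztig's book. Note, however, that the paper does not actually prove this lemma: it is stated without proof as a well-known collection of identities for divided powers in $S_q(n,d)$ (the surrounding text simply moves on to define the integral form $S_q(n,d)_\bZ$, citing Doty--Giaquinto). So there is no ``paper's own proof'' to compare against; your sketch supplies precisely the routine argument the authors are tacitly invoking.
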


We denote by $S_q(n,d)_\bZ$ the $\bZ[q,q^{-1}]$-subalgebra of $S_q(n,d)$
spanned by products of elements in the set $\{E_{\pm i}^{(\kappa)}1_\lambda\}$
(see~\cite[Thm. 2.3]{dotyg}).

\medskip

We are also interested in algebras which are certain direct sums of $q$-Schur algebras $S_q(n,d)$, 
for various specific values of $d$.

\begin{defn}
For $\Delta$ a finite subset of $\bN$ define the set of $n$-levels of $\Delta$ as
\begin{equation*}
\lev_n(\Delta)  = \biggl\{ \sum_i\mu_i ,\ \mu\in \Delta^n \biggr\} .
\end{equation*}
\end{defn}
This concept allows the introduction of the special direct sum $q$-Schur algebras.
\begin{defn}
We define the $\Delta$-$q$-Schur algebra as 
\begin{equation*}
S_q(n,\Delta) := 
\bigoplus\limits_{d\in\lev_n(\Delta)}
S_q(n,d)_\bZ .
\end{equation*}
\end{defn}

The identity in $S_q(n,\Delta)$ is $\sum_{\mu\in\Delta^n}1_\mu$
and the idempotents 
\begin{equation}
\label{eq:idemp-S}
e_d := \sum_{\lambda\in\Lambda(\Delta,d)}1_\lambda, 
\end{equation}
where $\Lambda(\Delta,d) := \Delta^n\cap\Lambda(n,d)$,
have the property that
\begin{equation*}
S_q(n,d)_\bZ = e_d S_q(n,\Delta) e_d .
\end{equation*}

\medskip

Recall that the irreducible modules $V_{\lambda}(d)$, 
for $\lambda\in\Lambda^+(n,d)$ , called \emph{Weyl modules}, can be 
constructed as subquotients of $S_q(n,d)$. Let $<$ denote  
the lexicographic order on $\Lambda(n,d)$.  
\begin{lem} 
\label{lem:weyl}
For any $\lambda\in\Lambda^+(n,d)$, we have 
$$V_{\lambda}(d) \cong S_q(n,d)1_{\lambda}/[\mu>\lambda].$$
Here $[\mu>\lambda]$ is the ideal generated by all elements of the form 
$1_{\mu}X1_{\lambda}$, for any $X\in S_q(n,d)$ and $\mu>\lambda$.
\end{lem}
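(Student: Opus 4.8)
\textbf{Proof plan for Lemma~\ref{lem:weyl}.}
The plan is to realize the Weyl module $V_\lambda(d)$ as the quotient of the ``standard'' module $S_q(n,d)1_\lambda$ by the largest submodule not containing the weight space $1_\lambda V$, and to identify this submodule with the image of $[\mu>\lambda]$. First I would recall that $S_q(n,d)$ is split semisimple (established earlier in the excerpt, $S_q(n,d)\cong\prod_{\mu\in\Lambda^+(n,d)}\End(V_\mu)$), so every $S_q(n,d)$-module is determined by its composition multiplicities, and the projective/standard module $S_q(n,d)1_\lambda$ decomposes according to the weight multiplicities of $1_\lambda$ in the various $V_\mu$. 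The key combinatorial input is that for $\lambda\in\Lambda^+(n,d)$ the weight $\lambda$ occurs in $V_\mu$ if and only if $\mu\ge\lambda$ in the dominance (hence also lexicographic, after the standard comparison) order, and that it occurs in $V_\lambda$ exactly once. This is the classical statement about weights of irreducibles for $\mathfrak{gl}_n$ transported through $\psi_{n,d}$.

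Next I would analyze the ideal $[\mu>\lambda]$, generated by all $1_\mu X 1_\lambda$ with $\mu>\lambda$, and its image $N_\lambda$ inside the left module $S_q(n,d)1_\lambda$; concretely $N_\lambda = \sum_{\mu>\lambda} S_q(n,d)1_\mu X 1_\lambda$, the sum of all submodules of $S_q(n,d)1_\lambda$ that are ``supported above $\lambda$''. The point is twofold: (i) $N_\lambda$ is a proper submodule, because $1_\lambda$ itself is not in $N_\lambda$ --- any element of $N_\lambda$ lies in the span of weight vectors of weight strictly above $\lambda$ together with vectors obtained by applying $E_{-i}$'s, which can never reproduce $1_\lambda$ with a nonzero coefficient, since lowering operators strictly decrease in the order from a dominant starting weight $\mu>\lambda$; and (ii) $N_\lambda$ is \emph{maximal} among submodules not containing $1_\lambda$, because in the semisimple setting the quotient $S_q(n,d)1_\lambda/N_\lambda$ has its $\lambda$-weight space one-dimensional and is generated by the image of $1_\lambda$ as a highest-weight vector, forcing it to be irreducible of highest weight $\lambda$, i.e. $\cong V_\lambda(d)$. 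Here I would use the relations for divided powers from the preceding Lemma to justify that $S_q(n,d)1_\lambda/N_\lambda$ is cyclic, generated by $\overline{1_\lambda}$, and that $E_i\overline{1_\lambda}=0$ for all $i$ (since $E_i 1_\lambda = 1_{\lambda+\alpha_i}E_i1_\lambda \in N_\lambda$ once $\lambda+\alpha_i$ has been pushed back into dominant form, or directly because $E_i 1_\lambda$ sits in a summand supported above $\lambda$).

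Finally I would assemble these observations: the quotient map $S_q(n,d)1_\lambda \twoheadrightarrow S_q(n,d)1_\lambda/N_\lambda$ realizes a highest-weight module with highest weight $\lambda$ whose $\lambda$-weight space is one-dimensional; by semisimplicity and the classification of irreducibles of $S_q(n,d)$ this quotient is exactly $V_\lambda(d)$, and the inclusion $V_\lambda(d)\hookrightarrow S_q(n,d)1_\lambda$ as the unique complement identifies it as a subquotient, matching the assertion in the statement. The main obstacle I anticipate is step (i)--(ii): carefully checking that $[\mu>\lambda]$ cuts out \emph{precisely} the maximal submodule avoiding $1_\lambda$ --- that is, that no ``extra'' copy of $V_\lambda$ hides in $S_q(n,d)1_\lambda$ beyond the one we quotient to, and that elements $1_\mu X 1_\lambda$ with $\mu>\lambda$ together with their $S_q(n,d)$-span genuinely exhaust the radical. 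This requires invoking either the known weight multiplicity formula (Kostka numbers) or an inductive argument on the lexicographic order over $\Lambda^+(n,d)$, peeling off the top weight at each stage; I would present the order-induction version since it keeps the argument internal to the presentation of $S_q(n,d)$ given above.
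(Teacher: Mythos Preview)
The paper does not prove this lemma; it is stated without argument as a known fact (the sentence introducing it begins ``Recall that\ldots''). So there is nothing in the paper to compare your plan against.

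Your strategy via the semisimple decomposition is correct and is the standard route: writing $S_q(n,d)1_\lambda \cong \bigoplus_{\nu\in\Lambda^+(n,d)} V_\nu^{\oplus \dim 1_\lambda V_\nu}$, one checks that $N_\lambda$ coincides with the sum of the isotypic components for $\nu\ne\lambda$. Indeed, any $\nu\ne\lambda$ appearing in this decomposition satisfies $\nu>_{\mathrm{dom}}\lambda$, hence $\nu>_{\mathrm{lex}}\lambda$, so the $\nu$-weight space of $V_\nu$ already lies in $N_\lambda$ and generates that summand; conversely every weight of $V_\lambda$ is $\le_{\mathrm{dom}}\lambda$, hence $\le_{\mathrm{lex}}\lambda$, so $1_\mu V_\lambda=0$ for all $\mu>\lambda$ and thus $V_\lambda\cap N_\lambda=0$. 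Together with $\dim 1_\lambda V_\lambda=1$ this gives the result.

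However, your argument for step~(i) as written does not work. You claim $1_\lambda\notin N_\lambda$ because lowering operators applied to a weight $\mu>\lambda$ ``can never reproduce $1_\lambda$ with a nonzero coefficient'', but $N_\lambda$ does in general have nonzero $\lambda$-weight space: for example with $n=3$, $\lambda=(2,2,2)$ and $\mu=\lambda+\alpha_1=(3,1,2)>_{\mathrm{lex}}\lambda$, the element $E_{-1}E_1 1_\lambda=E_{-1}\cdot(1_\mu E_1 1_\lambda)$ lies in $N_\lambda$ and has weight $\lambda$. What keeps $1_\lambda$ itself out of $N_\lambda$ is precisely that $N_\lambda$ misses the $V_\lambda$-isotypic component, and that is the decomposition argument above; the lowering-operator reasoning should simply be dropped in its favour.
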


The $S_q(n,\Delta)$-module $V$ is irreducible if and only if
$e_dV$ is an irreducible $S_q(n,d)$-module for exactly one $d\in\lev_n(\Delta)$,
while $e_{d'}V =0$ for the remaining elements $d'$ of $\lev_n(\Delta)$.
Therefore
the irreducibles of $S_q(n,\Delta)$ are exactly the $V_\lambda(d)$, 
where $d$ runs over $\lev_n(\Delta)$, 
and the highest weights of $S_q(n,\Delta)$ are precisely the elements in 
$\bigcup\limits_{d\in\lev_n(\Delta)}\Lambda^+(n,d)$.

\medskip

%%%%%%%%%%%%%%%%%%%%%%%%%%%%%%%%%%%%%%%%%%%%%%%%%%%%%%%%%%%%%%%%%%%%%%%%%%%%%%%%%%%%%%%%%%%%%%%%%%%%%%%%%%%%%%%%%
%%%%%%%%%%%%%%%%%%%%%%%%%%%%%%%%%%%%%%%%%%%%%%%%%%%%%%%%%%%%%%%%%%%%%%%%%%%%%%%%%%%%%%%%%%%%%%%%%%%%%%%%%%%%%%%%%

%%%%%%%%%%%%%%%%%%%%%%%%%%%%%%%%%%%%%%%%%%%%%%%%%%
\subsection{$\MOY$ algebras and ramifications}   %
\label{ssec:moy}                                 %
%%%%%%%%%%%%%%%%%%%%%%%%%%%%%%%%%%%%%%%%%%%%%%%%%%

In this subsection we describe a graphical calculus introduced in~\cite{moy}
by H.~Murakami, T.~Ohtsuki and S.~Yamada to obtain a 
state-sum-formula for the quantum $\mathfrak{sl}_N$ link polynomial. 
We relate this graphical calculus with the Skein and $q$-Schur algebras described before.
The calculus in~\cite{moy} can be roughly defined as a graphic description of  
the algebra of intertwiners between tensor products 
of higher fundamental representations of ${U}_q(\mathfrak{sl}_N)$.

An element $\mu=(\mu_1,\dotsc,\mu_k)\in\bN^k$ is a composition of $d$ 
if $\sum_{i}\mu_i =d$, it is denoted $\mu\vDash d$.
Let $W$ be the $N$ dimensional fundamental representation
of ${U}_q(\mathfrak{sl}_N)$ and consider 
\begin{equation*}
\bigoplus\limits_{\mu\vDash d} \bigwedge\limits^\mu W
\end{equation*}
where
\begin{equation*}
\bigwedge\limits^\mu W := \bigwedge\limits^{\mu_1}W \otimes \bigwedge\limits^{\mu_2}W \otimes\dotsc\otimes\bigwedge\limits^{\mu_n}W.
\end{equation*} 
For any $a,b \in \{1,\dotsc,N\}$ we have the intertwiners
\begin{equation*}
\Ydown_{a+b}^{a,b}\colon\wedge^{a+b}W\to \wedge^aW\otimes\wedge^bW 
\qquad\text{and}\qquad
\Yup_{a,b}^{a+b}\colon\wedge^{a}W\otimes\wedge^bW\to\wedge^{a+b}W.
\end{equation*}
Since any general intertwiner can be obtained as a composition of the various intertwiner maps above
we see that it can be described by compositions of the diagrams
\vspace*{2ex}
\begin{equation}
\label{eq:intertwiners}
\labellist
\tiny\hair 2pt
\pinlabel $a+b$ at  58 -15
\pinlabel $a$   at   0 155
\pinlabel $b$   at 115 157
\endlabellist
\figins{-18}{0.55}{vertexup}
\qquad\text{and}\qquad
\labellist
\tiny\hair 2pt
\pinlabel $a+b$ at  61 158
\pinlabel $a$   at   5 -15
\pinlabel $b$   at 120 -17
\endlabellist
\figins{-18}{0.55}{vertexdwn}.
\end{equation}\vspace*{2ex}

\n The diagrams are read from bottom to top. 
For any general intertwiner we obtain a trivalent graph whose edges are coloured from $\{1,\dotsm,N\}$.
The product of two intertwiner maps corresponds to composition of diagrams, i.e. the product $a.b$ corresponds to
stacking the diagram associated to $a$ on top of the diagram associated to $b$.
We write $\MOY_n$ for the collection of all these graphs containing less than $n$ ingoing and less than $n$ 
outgoing strands and such that sum of the colours of the edges at the bottom (or the top) is equal to $d$. 
The $\MOY$ algebra $\MOY_q(n,d,N)$ is the associative, unital algebra over $\bZ[q,q^{-1}]$ generated by all the
$\MOY_n$ diagrams modulo some relations, which can be found in~\cite{moy}. 
Notice that the product $a\cdot b$ is zero if the labels along the edges where the diagrams are to be glued do not match. 
As a matter of fact the relations given in~\cite{moy} do not form a complete set 
(a complete set of relations is conjectured in~\cite{morrison} and proved in~\cite{CKM}).
The authors of~\cite{moy} defined an evaluation of closed diagrams and derived 
only the relations which are enough to prove
invariance of the corresponding link polynomial.

The following lemma relates the $q$-Schur and the $\MOY$ algebras.
\begin{lem}
We have a homomorphism of algebras $f\colon S_q(n,d) \to \MOY_q(n,d,N)$ given by

\begin{align*}
1_{\lambda}\quad &\mapsto\qquad
\labellist
\tiny\hair 2pt
\pinlabel $\lambda_1$ at   5 235    
\pinlabel $\lai$      at 190 235
\pinlabel $\laii$     at 295 235
\pinlabel $\lambda_n$ at 485 235 
\pinlabel $\dotsc$    at 100 120 
\pinlabel $\dotsc$    at 400 120  
\endlabellist
\figins{-25}{0.7}{idweb}
\displaybreak[0] \\[3ex]
E_{+i}1_\lambda\ \
&\mapsto
\qquad
\labellist
\tiny\hair 2pt
 \pinlabel $\lambda_1$ at   5 235    
\pinlabel $\lai+1$      at 170 235
\pinlabel $\laii-1$     at 320 235
\pinlabel $\lambda_n$ at 485 235 
\pinlabel $1$ at 243 74
\pinlabel $\lai$ at 170 15
\pinlabel $\laii$ at 350 15 
\pinlabel $\dotsc$ at 100 120 
\pinlabel $\dotsc$ at 400 120  
\endlabellist
\figins{-25}{0.7}{hweblld}
\displaybreak[0] \\[3ex]
E_{-i}1_\lambda \ \
&\mapsto
\qquad
\labellist
\tiny\hair 2pt
\pinlabel $\lambda_1$ at   5 235    
\pinlabel $\lai-1$      at 170 235
\pinlabel $\laii+1$     at 320 235
\pinlabel $\lambda_n$ at 485 235 
\pinlabel $1$ at 243 74
\pinlabel $\lai$ at 170 15
\pinlabel $\laii$ at 350 15 
\pinlabel $\dotsc$ at 100 120 
\pinlabel $\dotsc$ at 400 120  
\endlabellist
\figins{-25}{0.7}{hwebrld}
\end{align*}
\end{lem}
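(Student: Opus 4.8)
The plan is to verify directly that the assignment on generators given in the statement respects the defining relations of $S_q(n,d)$, namely $1_\lambda 1_\mu = \delta_{\lambda\mu}1_\lambda$, $\sum_\lambda 1_\lambda = 1$, $E_{\pm i}1_\lambda = 1_{\lambda\pm\alpha_i}E_{\pm i}$, and the commutator relation $E_iE_{-j} - E_{-j}E_i = \delta_{ij}\sum_\lambda [\overline{\lambda}_i]1_\lambda$. Since $S_q(n,d)$ is presented by these generators and relations, a map defined on generators extends to an algebra homomorphism precisely when the images satisfy the relations, so this is exactly what must be checked. The orthogonality and completeness of the $1_\lambda$ on the $\MOY$ side are immediate from the graphical calculus: the image of $1_\lambda$ is the identity web on the sequence of edge-colours $(\lambda_1,\dots,\lambda_n)$, and stacking two such identity webs is zero unless the colours agree, while summing over all $\lambda\in\Lambda(n,d)$ gives the identity of $\MOY_q(n,d,N)$ (the unit being the sum of all identity webs on colour-sequences summing to $d$). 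The relation $E_{\pm i}1_\lambda = 1_{\lambda\pm\alpha_i}E_{\pm i}$ holds because the web for $E_{+i}1_\lambda$ has bottom colours $\lambda$ and top colours $\lambda+\alpha_i$ by construction, so pre- and post-composing with the appropriate identity webs is built into the picture.

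The substance is the commutator relation. First I would observe that for $i\neq j$ the images of $E_i1_\lambda$ and $E_{-j}1_\mu$ involve disjoint pairs of adjacent strands (or strands sharing at most one endpoint), so the corresponding webs can be isotoped past each other; the relevant $\MOY$ relations give $f(E_i1_\lambda)f(E_{-j}) = f(E_{-j})f(E_i1_\lambda)$, matching $\delta_{ij}=0$. For $i=j$ the point is to compute $f(E_i)f(E_{-i})1_\lambda - f(E_{-i})f(E_i)1_\lambda$ graphically. Stacking the webs produces, in each order, a web with a single ``square'' (a $1$-labelled edge created and then absorbed, or absorbed and then created) on the $i$-th and $(i+1)$-st strands; applying the $\MOY$ digon/square-removal relations — relations (R2a), (R2b) and their $\MOY_q(n,d,N)$ analogues, which introduce quantum-integer coefficients depending on the colours $\lambda_i,\lambda_{i+1}$ of the ambient edges — one rewrites each composite as a linear combination of the identity web $1_\lambda$ together with a ``through'' web where the $1$-strand passes straight through; the latter terms cancel in the difference, and the coefficient of $1_\lambda$ that survives is exactly $[\lambda_i - \lambda_{i+1}] = [\overline{\lambda}_i]$. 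This is precisely the $\mathfrak{sl}_N$ web incarnation of the $\mathfrak{sl}_2$ (or $\mathfrak{gl}_2$) commutation relation applied locally to the $i$-th pair of strands, and it is standard in the $\MOY$/foam literature; I would cite the relations of~\cite{moy} and, for the completed set, \cite{CKM}.

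The main obstacle, such as it is, is bookkeeping: one must be careful that the quantum integers produced by the two stacking orders differ exactly by the bilinear-form contribution $[\overline{\lambda}_i]$ and that the ``non-diagonal'' web terms genuinely coincide on the two sides so as to cancel. Concretely this means checking the two instances of the $\MOY$ bubble-removal move in the two orders of composition and confirming the sign/weight $[\lambda_i - \lambda_{i+1}]$ rather than, say, $[\lambda_{i+1} - \lambda_i]$ or an off-by-one variant; this is where the conventions for the intertwiners $\Yup,\Ydown$ in~\eqref{eq:intertwiners} must be pinned down. Once the relations are verified, the map extends uniquely to the claimed algebra homomorphism $f\colon S_q(n,d)\to\MOY_q(n,d,N)$, and nothing further is asserted (in particular injectivity is not claimed here, and indeed need not hold for all $N$), so the proof ends there.
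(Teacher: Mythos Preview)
Your approach is correct but differs from the paper's. The paper does not carry out any direct verification; it simply records that $f$ is the decategorification of the functor $\mathcal{F}_{Bim}$ constructed in~\cite[Section~4]{MSV}, and that well-definedness as an algebra map follows from the results proved there at the categorified level. Your route is more elementary and self-contained: you check the Doty--Giaquinto presentation of $S_q(n,d)$ directly against the web relations, the only substantive step being the square-switch identity that yields the commutator $E_iE_{-i}-E_{-i}E_i=[\overline{\lambda}_i]1_\lambda$. This buys independence from the categorified machinery of~\cite{MSV}, at the cost of having to pin down the $\MOY$ conventions and the needed ladder relation explicitly. One small correction: the labels (R2a), (R2b) you invoke belong to the HOMFLY-PT skein algebra $\skein_n(a,q)$ of Subsection~\ref{ssec:skein}, not to $\MOY_q(n,d,N)$; the relation you actually need is the general-label square/ladder relation for webs, which is in~\cite{CKM} (and, in the instances required here, already in~\cite{moy}). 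Your parenthetical ``and their $\MOY_q(n,d,N)$ analogues'' is thus the correct pointer, and the earlier labels should be dropped.
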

This is the decategorification of the functor $\mathcal{F}_{Bim}$ from~\cite[Section 4]{MSV}.
From the results of~\cite[Section 4]{MSV} we have that the assignment above is well defined
and defines a map of algebras.

This map generalizes to divided differences yielding

\begin{align*}
E_{+i}^{(\kappa)}1_\lambda\ \
&\mapsto
\qquad
\labellist
\tiny\hair 2pt
 \pinlabel $\lambda_1$   at   5 235    
\pinlabel $\lai+\kappa$  at 170 235
\pinlabel $\laii-\kappa$ at 320 235
\pinlabel $\lambda_n$    at 485 235 
\pinlabel $\kappa$       at 243 76
\pinlabel $\lai$         at 170 15
\pinlabel $\laii$        at 350 15 
\pinlabel $\dotsc$       at 100 100 
\pinlabel $\dotsc$       at 400 100  
\endlabellist
\figins{-23}{0.7}{hweblld}
\displaybreak[0] \\[3ex]
E_{-i}^{(\kappa)}1_\lambda \ \
&\mapsto
\qquad
\labellist
\tiny\hair 2pt
\pinlabel $\lambda_1$ at   5 235    
\pinlabel $\lai-\kappa$      at 170 235
\pinlabel $\laii+\kappa$     at 320 235
\pinlabel $\lambda_n$ at 485 235 
\pinlabel $\kappa$ at 243 76
\pinlabel $\lai$ at 170 15
\pinlabel $\laii$ at 350 15 
\pinlabel $\dotsc$ at 100 100 
\pinlabel $\dotsc$ at 400 100  
\endlabellist
\figins{-23}{0.7}{hwebrld}
\end{align*}

\medskip

We now turn to a 1-parameter specialization of the Skein algebra of 
Subsection~\ref{ssec:skein}.
Let $a=q^N$ and consider $\skein_q(n,N): = \skein_n(q^N,q)$. 
The grading in $\skein_n(a,q)$ %defined in Equation~\eqref{eq:skeingrad}
descends to $\skein_q(n,N)$ and we have (compare with Equation~\eqref{eq:skeingrad})
\begin{equation*}
\skein_q(n,N) = \bigoplus\limits_{n_\pm =-n}^n \bigl(\skein_q(n,N)\bigr)_{n_\pm} .
\end{equation*}

\medskip

From now on we take $\Delta_N = \{1,N-1\}$ and for this particular $\Delta$ we denote 
the $\Delta$-$q$-Schur algebra by $\schurD$.

\medskip

The rest of this section is devoted to prove the following
\begin{prop}
\label{prop:arc-schur}
There is an injective homomorphism of algebras 
\begin{equation*}
\alpha\colon \skein_q(n,N)\to \schurD .
\end{equation*}
\end{prop}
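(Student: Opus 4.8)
The plan is to realize the map $\alpha$ through the intermediate $\MOY$-calculus and the representation theory of $U_q(\mathfrak{sl}_N)$. First I would recall that, after setting $a=q^N$, the HOMFLY-PT skein algebra $\skein_q(n,N)$ is precisely the algebra of intertwiners between tensor products of the fundamental representation $W=\wedge^1 W$ of $U_q(\mathfrak{sl}_N)$ and its dual, with an upward strand corresponding to $W$ and a downward strand to $W^*$. The key structural input is the canonical isomorphism $W^*\cong\wedge^{N-1}W$ of $U_q(\mathfrak{sl}_N)$-modules, which allows every downward-oriented strand to be replaced by an $(N-1)$-labelled upward strand. Under this substitution a diagram in $\skein_q(n,N)$ with $n_+$ up-strands and $n_-$ down-strands (and $n_++n_-\leq n$, as arranged in Subsection~\ref{ssec:skein}) becomes a $\MOY$ diagram whose boundary labels are a composition $\mu\in\{1,N-1\}^{n_++n_-}$, padded by zeros to length $n$; that is, it lands in $\MOY_q(n,d,N)$ for $d=\sum_i\mu_i\in\lev_n(\Delta_N)$. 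This gives a map $\beta\colon\skein_q(n,N)\to\bigoplus_{d\in\lev_n(\Delta_N)}\MOY_q(n,d,N)$, and one checks it is a homomorphism because the replacement $W^*\leftrightarrow\wedge^{N-1}W$ is functorial, so it is compatible with vertical stacking.

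Next I would invoke the lemma of Subsection~\ref{ssec:moy} giving a homomorphism $f\colon S_q(n,d)\to\MOY_q(n,d,N)$, which comes from the functor $\mathcal F_{Bim}$ of~\cite{MSV}. The crucial point is that for the restricted range of labels $\Delta_N=\{1,N-1\}$ this $f$ is actually an isomorphism onto its image, and that the image contains all the $\MOY$ diagrams arising from $\beta$: indeed, a $\MOY_n$ diagram with all boundary labels in $\{0,1,N-1\}$ summing to $d$ can be rewritten, using the $\MOY$ relations of~\cite{moy}, as a linear combination of "ladder" diagrams built only from the elementary pieces in~\eqref{eq:intertwiners} with the middle rungs labelled so as to match $E_{\pm i}^{(\kappa)}1_\lambda$. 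So I would identify $\schurD=\bigoplus_{d\in\lev_n(\Delta_N)}S_q(n,d)_\bZ$ with the subalgebra of $\bigoplus_d\MOY_q(n,d,N)$ generated by the images of the $f$'s, and then define $\alpha=f^{-1}\circ\beta$. Concretely $\alpha$ sends an upward strand in position $i$ (resp. a $\wedge^{N-1}$-strand, i.e. a downward strand) to the appropriate idempotent $1_\lambda$ with $\lambda_i=1$ (resp. $\lambda_i=N-1$), a crossing/ladder-piece in $\skein_q(n,N)$ to the corresponding product of $E_{\pm i}$'s, and a cup-cap to the composite $E_i E_{-i}$-type element, matching the Hecke-algebra quotient descriptions in Subsection~\ref{ssec:hecke} and Doty-Giaquinto's proposition.

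Finally, for injectivity I would argue on the level of representations. Both $\skein_q(n,N)$ and $\schurD$ act on $\bigoplus_{\mu}\wedge^\mu W$ (over $\mu\in\Delta_N^n$), the former because it is by definition built from intertwiners of $U_q(\mathfrak{sl}_N)$-modules, the latter through $\psi_{n,d}$ together with $W^*\cong\wedge^{N-1}W$; and $\alpha$ is compatible with these two actions by construction. Since $S_q(n,d)\cong\prod_{\lambda\in\Lambda^+(n,d)}\End(V_\lambda)$ is split semisimple, $\schurD$ is semisimple and any nonzero element acts nontrivially on $\bigoplus_\mu\wedge^\mu W$ — more precisely, the multiplicities of the $V_\lambda$ in $\wedge^\mu W$ are large enough that the action is faithful for the range $d\in\lev_n(\{1,N-1\})$. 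Hence if $\alpha(x)=0$ then $x$ acts as zero on $\bigoplus_\mu\wedge^\mu W$; but the defining relations of $\skein_q(n,N)$ are exactly the relations holding among these intertwiners (this is Reshetikhin-Turaev/MOY faithfulness at $a=q^N$ in the relevant weight range), so $x=0$. The main obstacle I expect is precisely this last faithfulness statement: showing that $\skein_q(n,N)$ injects into its representation on $\bigoplus_\mu\wedge^\mu W$, equivalently that the $\MOY$ relations of~\cite{moy} together with the $\wedge^{N-1}$-substitution generate \emph{all} relations among the relevant intertwiners — this is where one must be careful, using that $n_++n_-\leq n$ so that only "small" tensor powers occur and the classical Schur-Weyl-type faithfulness applies, and possibly citing~\cite{CKM} for the completeness of the web relations.
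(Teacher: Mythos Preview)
Your overall strategy---route through $\MOY$ via $W^*\cong\wedge^{N-1}W$, then relate to $S_q(n,d)$ via the homomorphism $f$---is exactly the paper's. The decomposition into graded pieces $\skein_q(n,N)_{n_\pm}$ and the assignment of boundary labels in $\{1,N-1\}$ are also the same.

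There is, however, a genuine gap in how you \emph{define} $\alpha$. You set $\alpha=f^{-1}\circ\beta$, which presupposes that $f$ is injective (at least on the span of the $\MOY$ diagrams you produce). But the paper explicitly notes, immediately after the proof, that ``$f$ is not known to be injective nor surjective''; the web relations of~\cite{moy} are not a complete presentation of the intertwiner algebra, so you cannot simply invert $f$ on its image. Your appeal to~\cite{CKM} might rescue this, but that is a much heavier input than the paper needs, and in any case it is not what you actually argue---you assert injectivity of $f$ on the $\{1,N-1\}$ range without justification.

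The paper sidesteps this entirely: it writes down $\alpha(n_\pm)$ \emph{explicitly on generators} (sending each oriented elementary piece to a specific monomial in $E_{\pm i}^{(\kappa)}1_{\lambda(\und\ell)}$), and then checks that the triangle
\[
\xymatrix@C=2.5mm{
\skein_q(n,N)_{n_{\pm}}\ar[rr]^{\phi(n_\pm)}\ar[dr]_{\alpha(n_\pm)} && \MOY_q(n,d,N)\\
&  S_q(n,d)\ar[ur]_{f} &
}
\]
commutes. Once $f\circ\alpha(n_\pm)=\phi(n_\pm)$ and $\phi(n_\pm)$ is injective (this is the intertwiner interpretation you also invoke), injectivity of $\alpha(n_\pm)$ is immediate---with no hypothesis on $f$ whatsoever. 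Your representation-theoretic injectivity argument via faithful action on $\bigoplus_\mu\wedge^\mu W$ is then unnecessary: it amounts to reproving the injectivity of $\phi$, which the paper takes as the starting point, and the ``main obstacle'' you flag dissolves once $\alpha$ is given by explicit formulas rather than by inverting $f$.
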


This result will follow from the lemma below.
\begin{lem}
For each pair $(n_+,n_-)$ with $n_++n_-=n$ there is an injective homomorphism of algebras 
\begin{equation*}
\alpha(n_\pm) \colon \skein_q(n,N)_{n_\pm}\to S_q(n,n_++(N-1)n_-).
\end{equation*}
\end{lem}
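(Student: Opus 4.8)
The plan is to construct $\alpha(n_\pm)$ from the $U_q(\mathfrak{sl}_N)$-isomorphism $W^{*}\cong\wedge^{N-1}W$ together with the graphical calculus of Subsection~\ref{ssec:moy}, and then to deduce injectivity from representation theory. Put $d=n_{+}+(N-1)n_{-}$, the element of $\lev_n(\Delta_N)$ attached to the pair $(n_{+},n_{-})$. To a boundary orientation pattern $\vec s\in\{+,-\}^{n}$ having $n_{+}$ pluses I attach the weight $\lambda(\vec s)\in\Lambda(n,d)$ with $\lambda(\vec s)_i=1$ when $s_i=+$ and $\lambda(\vec s)_i=N-1$ when $s_i=-$; this gives a bijection onto $\{1,N-1\}^{n}\cap\Lambda(n,d)$. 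I would then define $\alpha(n_\pm)$ on the generators of $\skein_q(n,N)_{n_\pm}$ as follows: reverse every downward edge of a generating $4$-graph and relabel it by the colour $N-1$, replace each $4$-valent vertex by the $\mathfrak{sl}_N$-web prescribed by the Kauffman--Vogel conventions (built from the trivalent vertices $\Yup,\Ydown$), and read the resulting web off in $S_q(n,d)$ through the dictionary of the lemma relating $S_q(n,d)$ and $\MOY_q(n,d,N)$, i.e.\ express it as a product of the basic webs $f(1_\lambda)$, $f(E_{\pm i}^{(\kappa)}1_\lambda)$ and take the corresponding product of $1_\lambda$, $E_{\pm i}^{(\kappa)}1_\lambda$ in $S_q(n,d)$. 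Concretely the identity pattern $\vec s$ goes to $1_{\lambda(\vec s)}$, while each oriented $4$-valent vertex or cup--cap on rails $i,i+1$ goes to an explicit $\bZ[q,q^{-1}]$-combination of monomials $1_{\mu}E_{\pm i}^{(\kappa_1)}E_{\pm i}^{(\kappa_2)}1_{\nu}$ (with $\mu,\nu$ ranging over weights reachable from $\lambda(\vec s)$ by the corresponding moves, possibly passing through an $N$ in one slot for the cup--cap generators). Extending multiplicatively, the image of $\alpha(n_\pm)$ lies in the corner $e\,S_q(n,d)\,e$, where $e=\sum_{\lambda\in\{1,N-1\}^{n}\cap\Lambda(n,d)}1_\lambda$.

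Second, I would check that $\alpha(n_\pm)$ is well defined, i.e.\ that the defining relations of $\skein_q(n,N)_{n_\pm}$ are sent to $0$: these are relations \eqref{R1MOY}--\eqref{SPLITMOY} of Definition~\ref{def:moy-aq} specialised at $a=q^{N}$ and their consequence \eqref{R3bMOY}, together with planar isotopy and the deletion of a floating circle against a factor $[N]$. After the substitution each of these becomes, for every admissible choice of orientations of the boundary, an identity among the monomials above; it follows from the relations for divided powers recalled earlier and the $\MOY$ relations of~\cite{moy, MSV}, the point being that the $N$-dependent scalars $[a,-k]|_{a=q^{N}}=[N+k]$ occurring on the skein side are produced on the $S_q(n,d)$ side by the terms $[\overline{\lambda}_i]1_\lambda$ (and by quantum dimensions of exterior powers) for the weights that appear. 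Exactly as in the proof of Theorem~\ref{thm:jaegmap}, the reflection symmetry and the simultaneous change $(a,q)\leftrightarrow(a^{-1},q^{-1})$ reduce the verification to one orientation per relation, and $4$-graphs with distinct boundary orientations are sent into distinct corners $1_{\mu}S_q(n,d)1_{\lambda}$, so no cancellation between them is possible.

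For injectivity I would postcompose with the action $\pi\colon S_q(n,d)\to\End_{\bQ(q)}\bigl(\wedge^{d}(W^{\oplus n})\bigr)$ coming from skew Howe duality, under which $1_{\mu}$ acts as the projection onto the weight summand $\wedge^{\mu_1}W\otimes\dots\otimes\wedge^{\mu_n}W$ and the divided powers act by the basic webs of the lemma relating $S_q(n,d)$ and $\MOY_q(n,d,N)$. Writing $W^{+}=W$, $W^{-}=W^{*}$ and $W^{\vec s}=W^{s_1}\otimes\dots\otimes W^{s_n}$, the composite $\pi\circ\alpha(n_\pm)$ sends — by the very way $\alpha(n_\pm)$ was defined — a $4$-graph to the $U_q(\mathfrak{sl}_N)$-intertwiner it represents once $W^{*}$ is replaced by $\wedge^{N-1}W$, acting on the direct summand $M':=\bigoplus_{\vec s}W^{\vec s}=\bigoplus_{\vec s}\wedge^{\lambda(\vec s)}W$ of $M:=\wedge^{d}(W^{\oplus n})=\bigoplus_{\mu\in\Lambda(n,d)}\wedge^{\mu_1}W\otimes\dots\otimes\wedge^{\mu_n}W$ and by zero elsewhere. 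Hence $\pi\circ\alpha(n_\pm)$ is the composite of the evaluation map $\skein_q(n,N)_{n_\pm}\to\End_{U_q(\mathfrak{sl}_N)}(M')$ with the inclusion $\End_{U_q(\mathfrak{sl}_N)}(M')\hookrightarrow\End_{U_q(\mathfrak{sl}_N)}(M)$ of the endomorphism algebra of a direct summand, the latter being injective. Since for generic $q$ the evaluation map is an isomorphism — the $a=q^{N}$ Kauffman--Vogel relations give a presentation of the category of $U_q(\mathfrak{sl}_N)$-intertwiners generated by $W$ and $W^{*}$, which is the input from~\cite{moy, morrison, CKM} already invoked in the introduction — the composite $\pi\circ\alpha(n_\pm)$ is injective, and therefore so is $\alpha(n_\pm)$.

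The step I expect to be the main obstacle is exactly this last identification: that $\skein_q(n,N)_{n_\pm}$ really is the full intertwiner algebra, equivalently that it has no nonzero negligible morphisms at $a=q^{N}$. If one prefers not to invoke completeness of the Kauffman--Vogel relations, the alternative is to mimic the argument for $\hat\psi$ in the proof of Theorem~\ref{thm:jaegmap}: fix the standard basis of $\skein_q(n,N)_{n_\pm}$ by reduced oriented $4$-graphs (oriented crossingless matchings decorated by reduced Hecke-type words), apply $\alpha(n_\pm)$, and verify directly that their images are linearly independent in $S_q(n,d)$ by evaluating the corresponding webs against an explicit family of weight vectors of $\wedge^{d}(W^{\oplus n})$; the ``minimal form'' $n_{+}+n_{-}=n$ from Subsection~\ref{ssec:skein} keeps all intermediate weights inside $\Lambda(n,d)$, and the remaining bookkeeping — chiefly how the oriented cups and caps carry weight through $N$ — is routine.
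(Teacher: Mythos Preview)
Your approach and the paper's rest on the same two ingredients --- the isomorphism $W^{*}\cong\wedge^{N-1}W$ and the identification of $\skein_q(n,N)$ with $U_q(\mathfrak{sl}_N)$-intertwiners --- but they are organised differently. The paper first builds an injection $\phi(n_\pm)\colon\skein_q(n,N)_{n_\pm}\hookrightarrow\MOY_q(n,d,N)$ straight from the intertwiner interpretation (so well-definedness and injectivity of $\phi$ are free), observes that $\operatorname{Im}\phi(n_\pm)\subset\operatorname{Im} f$, and then \emph{defines} $\alpha(n_\pm)$ by writing down, generator by generator, explicit lifts through $f$; injectivity of $\alpha(n_\pm)$ then drops out of the commutative triangle $f\circ\alpha(n_\pm)=\phi(n_\pm)$. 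You instead define $\alpha(n_\pm)$ directly into $S_q(n,d)$, verify the skein relations there by hand, and obtain injectivity by postcomposing with the skew-Howe representation $\pi$ of $S_q(n,d)$ on $\wedge^{d}(W^{\oplus n})$. Your route is more self-contained and makes the skew-Howe picture explicit, at the cost of the relation checks; the paper's route avoids those checks entirely by inheriting well-definedness from $\phi$. The ``main obstacle'' you flag --- faithfulness of the evaluation to intertwiners --- is exactly what the paper invokes from~\cite{moy} when it asserts $\phi(n_\pm)$ is injective, so on that point the two arguments stand or fall together.
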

\begin{proof}
The algebra $\skein_q(n,N)$ has an interpretation as a diagrammatic description of the algebra 
of intertwiners between tensor products
of the fundamental representation $V$ of $\mathfrak{sl}_N$ and its dual $V^*$~\cite{moy}.
The isomorphism of representations $\Phi\colon \wedge^kV^* \xra{\ \cong\ } \wedge^{N-k}V$
induce injections
\begin{equation*}
\phi(n_\pm)\colon
\skein_q(n,N)_{n_\pm}
\hookrightarrow \MOY_q(n,n_++(N-1)n_-,N)
\end{equation*}
which is given on generators by

\begin{align*}
%arcs
\figins{-10}{0.35}{one-u}\quad\ 
&{\longmapsto}\mspace{60mu}
\labellist
\tiny\hair 2pt
\pinlabel $1$ at  25 180
\endlabellist
\figins{-25}{0.80}{oneweb}
&
\figins{-10}{0.35}{one-d}\quad\ 
&{\longmapsto}\mspace{60mu}
\labellist
\tiny\hair 2pt
\pinlabel $N-1$ at  55 180
\endlabellist
\figins{-25}{0.80}{oneweb}
\displaybreak[0]\\
\figins{-2}{0.15}{cup-l}\  \ 
&{\longmapsto}\
\labellist
\tiny\hair 2pt
\pinlabel $0$   at  -5 40
\pinlabel $N$   at 125 40
\pinlabel $1$ at -5 180
\pinlabel $N-1$   at 155 180
\pinlabel $1$   at  60 120
\endlabellist
q^{N-1}\ \
\figins{-30}{0.90}{hweb-l-ON}
&
\figins{-2}{0.15}{cup-r}\ \ 
&{\longmapsto}\qquad
\labellist
\tiny\hair 2pt
\pinlabel $N$   at  -0 40
\pinlabel $0$   at 130 40
\pinlabel $N-1$   at   -35 180
\pinlabel $1$ at 130 180
\pinlabel $1$   at  65 120
\endlabellist
\figins{-30}{0.90}{hweb-r-NO}
\displaybreak[0]\\
\figins{-2}{0.15}{cap-l}\ \ 
&{\longmapsto}\
\labellist
\tiny\hair 2pt
\pinlabel $N-1$ at -30 40
\pinlabel $1$   at 125 40
\pinlabel $N$   at  -5 180
\pinlabel $0$   at 135 180
\pinlabel $1$   at  65 120
\endlabellist
q^{-N+1}\ \ \ 
\figins{-30}{0.90}{hweb-l-NN1}
&
\figins{-2}{0.15}{cap-r}\ \  
&{\longmapsto}\qquad
\labellist
\tiny\hair 2pt
\pinlabel $1$   at   0 40
\pinlabel $N-1$ at 155 40
\pinlabel $0$   at  -5 180
\pinlabel $N$   at 130 180
\pinlabel $1$   at  60 120
\endlabellist
\figins{-30}{0.90}{hweb-r-1NN}
\displaybreak[0]\\
%vertices
\figins{-10}{0.35}{crossl}\ \ 
&{\longmapsto}\qquad
\labellist
\tiny\hair 2pt
\pinlabel $1$   at 130  30
\pinlabel $N-1$ at -30  30
\pinlabel $N-1$ at 155 190
\pinlabel $1$   at -5 190
\pinlabel $N-2$ at  60 145
\pinlabel $\color[rgb]{.5,.5,.5}{\downarrow}$ at 60 125
\endlabellist
\figins{-30}{0.90}{hweb-r}
&
\figins{-10}{0.35}{crossr}\ \  
&{\longmapsto}\qquad
\labellist
\tiny\hair 2pt
\pinlabel $1$   at  -5  30
\pinlabel $N-1$ at 155  30
\pinlabel $N-1$ at -35 190
\pinlabel $1$   at 135 190
\pinlabel $N-2$ at  65 145
\pinlabel $\color[rgb]{.5,.5,.5}{\downarrow}$ at 65 125
\endlabellist
\figins{-30}{0.90}{hweb-l}
\displaybreak[0]\\
%vertices
\figins{-10}{0.35}{crossu}\ \ 
&{\longmapsto}\qquad
\labellist
\tiny\hair 2pt
\pinlabel $1$ at -5  30
\pinlabel $1$ at 115  30
\pinlabel $1$ at -5 190
\pinlabel $1$ at 115 190
\pinlabel $2$ at  35 110
\pinlabel $0$ at 110 110
\endlabellist
\figins{-30}{0.90}{sq-NO}
&
\figins{-10}{0.35}{crossd}\ \  
&{\longmapsto}\qquad
\labellist
\tiny\hair 2pt
\pinlabel $N-1$ at -35  30
\pinlabel $N-1$ at 145  30
\pinlabel $N-1$ at -35 190
\pinlabel $N-1$ at 145 190
\pinlabel $N$   at   0 110
\pinlabel $N-2$ at 135 110
\endlabellist
\figins{-30}{0.90}{sqweb-r}
\end{align*}

\medskip

It is clear that
\begin{equation*}
\Image\phi(n_\pm)\subset\Image f .
\end{equation*}
We next define homomorphisms 
$\alpha(n_\pm)\colon \skein_q(n,N)_{n_\pm}\to S_q(n,n_++(N-1)n_-)$
that makes the following diagram commute 
\begin{equation}
\label{eq:cd-ArcSchur}
\vcenter{
\xymatrix@C=2.5mm{
\skein_q(N,n)_{n_{\pm}}\ar[rr]^{\phi(n_\pm)}\ar[dr]_{\alpha(n_\pm)} && \MOY_q(n,n_++(N-1)n_-,N)
\\
&  S_q(n, n_++(N-1)n_-)\ar[ur]_{f} & 
}}
\end{equation}

First define $\lambda(\und{\ell})\in\bN^n$ as 
\begin{equation*}
\lambda_i(\und{\ell})=
\begin{cases} 
1  & \text{\ if\ } \und{\ell}_i=+
\\
N-1 & \text{\ if\ } \und{\ell}_i=-
\end{cases}
\end{equation*}
We also denote $\imath_{(a,b)}\lambda(\und{\ell})$ 
the sequence obtained 
from $\lambda(\und{\ell})$
by a translation 
$\lambda_{j}(\und{\ell})\mapsto\lambda_{j+2}(\und{\ell}) $
followed by taking $\lambda_i(\und{\ell})=a$ and 
$\lambda_{i+1}(\und{\ell})=b$.

\smallskip

$\bullet$ A diagram in $\skein_q(N,n)$ consisting of $n$ vertical strands 
is sent to $1_{\lambda(\und{\ell})}$.

$\bullet$ For the remaining generators we assume that there are $i-1$ vertical strands on the left of the 
$\skein_n$-diagram depicted and that $\ell$ intersects generically
the diagram under consideration in the neighborhood
of its bottom boundary.
\begin{align*}
%arcs
\figins{-2}{0.15}{cup-l}\  \ 
&{\longmapsto}\ \
q^{N-1}\
E_i1_{\imath_{(0,N)}\lambda(\und{\ell})}
&
\figins{-2}{0.15}{cup-r}\ \ 
&{\longmapsto}\quad
E_{-i}1_{\imath_{(N,0)}\lambda(\und{\ell})}
\displaybreak[0]\\[2ex]
\figins{-2}{0.15}{cap-l}\ \ 
&{\longmapsto}\ \
q^{-N+1}\
E_{i}1_{\lambda(\und{\ell})}
&
\figins{-2}{0.15}{cap-r}\ \  
&{\longmapsto}\quad
E_{-i}1_{\lambda(\und{\ell})}
\displaybreak[0]\\[2ex]
%vertices
\figins{-10}{0.35}{crossl}\ \ 
&{\longmapsto}\quad
E_{-i}^{(N-2)}1_{\lambda(\und{\ell})}
&
\figins{-10}{0.35}{crossr}\ \  
&{\longmapsto}\quad
E_{i}^{(N-2)}1_{\lambda(\und{\ell})}
\displaybreak[0]\\[2ex]
%vertices
\figins{-10}{0.35}{crossu}\ \ 
&{\longmapsto}\quad
E_{-i}E_{i}1_{\lambda(\und{\ell})}
&
\figins{-10}{0.35}{crossd}\ \  
&{\longmapsto}\quad
E_{-i}E_{i}1_{\lambda(\und{\ell})}
\end{align*}
Notice that due to the particular form of the $\lambda(\und{\ell})$'s involved in the upward pointing and 
downward pointing vertices we also have
\begin{equation*}
\alpha\biggl(
\figins{-10}{0.35}{crossu}\biggr)  
=
E_{i}E_{-i}1_{\lambda(\und{\ell})}
\qquad\text{and}\qquad
\alpha\biggl(
\figins{-10}{0.35}{crossd}\biggr)  
=
E_{i}E_{-i}1_{\lambda(\und{\ell})}\ .
\end{equation*}
This ends the definition of $\alpha(n_\pm)$.

It is immediate that the diagram in Equation~\eqref{eq:cd-ArcSchur} commutes.
Therefore we conclude that the homomorphism $\alpha(n_\pm)$ is injective.
\end{proof}

\begin{proof}[Proof of Proposition~\ref{prop:arc-schur}]
Define $\alpha$ as the sum of all the $\alpha(n_\pm)$.
The claim now follows from the fact that the images of the $\alpha(n_\pm)$ are disjoint 
for distinct values of $n_{\pm}$.
\end{proof}

\medskip

Although $f$ is not known to be injective nor surjective
the collection of $\MOY$ diagrams is useful when dealing with $q$-Schur algebras.
To keep working diagrammatically with $S_q(n,d)$ we need a 
more precise version of the $\MOY$ algebra, i.e. an algebra generated by the set of $\MOY$ 
diagrams modulo a set of \emph{complete relations}.
To this end we define
\begin{defn}
$A_{\MOY}(d,N)$ is the algebra generated by the $\MOY$ diagrams in $\Image{f}$
modulo the relations coming from $S_q(n,d)$.
\end{defn}

Unless otherwise stated all the $\MOY$ diagrams occurring from now on 
will refer to generators of  $A_{\MOY}(d,N)$.

%%%%%%%%%%%%%%%%%%%%%%%%%%%%%%%
%                             %
%       Embeddings            %
%                             %
%%%%%%%%%%%%%%%%%%%%%%%%%%%%%%%

%%%%%%%%%%%%%%%%%%%%%%%%%%%%%%%%%%%%%%%%%%%%%%%
\subsection{The BMW-embedding in $q$-Schur}   %
%%%%%%%%%%%%%%%%%%%%%%%%%%%%%%%%%%%%%%%%%%%%%%%
\label{ssec:embds}                            %
%%%%%%%%%%%%%%%%%%%%%%%%%%%%%%%%%%%%%%%%%%%%%%%

Denote $\bmw_q(n,N)$ the specialization $\bmw_n(q^N,q)$. 
This specialization is related with the representation theory of the quantum group $U_q(\mathfrak{so}_{2n})$ 
(see for instance~\cite{kauff-phys}).

Composing the $q^N$-specialization of Jaeger's homomorphism $\psi\colon\bmw_q(n,N)\to\skein_q(n,N)$ 
with the homomorphism $\alpha\colon\skein_q(n,N)\to \schurD$ of 
Subsection~\ref{ssec:moy}
we obtain a homomorphism $\theta\colon\bmw_q(n,N)\to\schurD$. 

\smallskip

In terms of the generators of $\bmw_q(n,N)$
the homomorphism $\theta$ reads

\begin{align*}
 \figins{-10}{0.35}{cupcap}\
&\overset{\theta}{\longmapsto}\quad\
\labellist
\tiny\hair 2pt
\pinlabel $N-1$ at -30 30
\pinlabel $1$   at 115 30
\pinlabel $1$   at  -5 190
\pinlabel $N-1$ at 145 190
\pinlabel $N$   at   0 110
\pinlabel $0$   at 110 110
\endlabellist
\figins{-30}{0.90}{sq-NO}\qquad + \qquad
\labellist
\tiny\hair 2pt
\pinlabel $1$   at  -5  30
\pinlabel $N-1$ at 140  30
\pinlabel $N-1$ at -35 190
\pinlabel $1$   at 115 190
\pinlabel $N$   at   0 110
\pinlabel $0$   at 110 110
\endlabellist
\figins{-30}{0.90}{sq-NO}\qquad + q^{-N+1}\qquad
\labellist
\tiny\hair 2pt
\pinlabel $N-1$ at -35  30
\pinlabel $1$ at 115  30
\pinlabel $N-1$ at -35 190
\pinlabel $1$   at 115 190
\pinlabel $N$   at  35 110
\pinlabel $0$   at 110 110
\endlabellist
\figins{-30}{0.90}{sq-NO}\quad + q^{N-1}\ \
\labellist
\tiny\hair 2pt
\pinlabel $1$   at  -5  30
\pinlabel $N-1$ at 140  30
\pinlabel $1$   at  -5 190
\pinlabel $N-1$ at 140 190
\pinlabel $0$   at  35 110
\pinlabel $N$   at 110 110
\endlabellist
\figins{-30}{0.90}{sq-ON}
\displaybreak[0]\\[2ex]
\figins{-10}{0.35}{4vert}\
&\overset{\theta}{\longmapsto}\ \
q^{N-2}\
\labellist
\tiny\hair 2pt
\pinlabel $1$   at  -5  30
\pinlabel $N-1$ at 140  30
\pinlabel $1$   at  -5 190
\pinlabel $N-1$ at 140 190
\pinlabel $0$   at  35 110
\pinlabel $N$   at 110 110
\endlabellist
\figins{-30}{0.90}{sq-ON}\qquad +\ \
q^{-1}\quad\ \
\labellist
\tiny\hair 2pt
\pinlabel $N-1$  at  -35 190
\pinlabel $1$    at  120 190
\endlabellist
\figins{-30}{0.90}{id2web}\ \ \  
+ q^{-N+2}\qquad
\labellist
\tiny\hair 2pt
\pinlabel $N-1$ at -30  30
\pinlabel $1$   at 115  30
\pinlabel $N-1$ at -30 190
\pinlabel $1$   at 115 190
\pinlabel $N$   at  35 110
\pinlabel $0$   at 110 110
\endlabellist
\figins{-30}{0.90}{sq-NO}\ \ +\ \
q\ \;
\labellist
\tiny\hair 2pt
\pinlabel $1$    at   -5 190
\pinlabel $N-1$  at  145 190
\endlabellist
\figins{-30}{0.90}{id2web}
%\displaybreak[0]
\\[2ex] 
& \mspace{60mu} + \ \
\labellist
\tiny\hair 2pt
\pinlabel $1$ at -5  30
\pinlabel $1$ at 115  30
\pinlabel $1$ at -5 190
\pinlabel $1$ at 115 190
\pinlabel $2$ at  35 110
\pinlabel $0$ at 110 110
\endlabellist
\figins{-30}{0.90}{sq-NO}\ \ + \qquad
\labellist
\tiny\hair 2pt
\pinlabel $1$   at  -5  30
\pinlabel $N-1$ at 155  30
\pinlabel $N-1$ at -35 190
\pinlabel $1$   at 135 190
\pinlabel $N-2$ at  65 145
\pinlabel $\color[rgb]{.5,.5,.5}{\downarrow}$ at 65 125
\endlabellist
\figins{-30}{0.90}{hweb-l}\qquad + \qquad
\labellist
\tiny\hair 2pt
\pinlabel $1$   at 130  30
\pinlabel $N-1$ at -30  30
\pinlabel $N-1$ at 155 190
\pinlabel $1$   at -5 190
\pinlabel $N-2$ at  60 145
\pinlabel $\color[rgb]{.5,.5,.5}{\downarrow}$ at 60 125
\endlabellist
\figins{-30}{0.90}{hweb-r}\qquad + \qquad
\labellist
\tiny\hair 2pt
\pinlabel $N-1$ at -35  30
\pinlabel $N-1$ at 145  30
\pinlabel $N-1$ at -35 190
\pinlabel $N-1$ at 145 190
\pinlabel $N$   at   0 110
\pinlabel $N-2$ at 135 110
\endlabellist
\figins{-30}{0.90}{sqweb-r}
\displaybreak[0]\\[2ex]
\figins{-10}{0.35}{one}\quad\ 
&\overset{\theta}{\longmapsto}
\qquad
\labellist
\tiny\hair 2pt
\pinlabel $1$ at 25 190
\endlabellist
\figins{-30}{0.90}{id1web}\quad + \quad
\labellist
\tiny\hair 2pt
\pinlabel $N-1$ at 50 190
\endlabellist
\figins{-30}{0.90}{id1web}
\end{align*}

Notice that 
\begin{equation*}
\labellist
\tiny\hair 2pt
\pinlabel $j$   at  -5  30
\pinlabel $i$   at 110  30
\pinlabel $i$   at  -5 190
\pinlabel $j$   at 110 190
\pinlabel $i+j$ at -15 110
\pinlabel $0$   at 110 110
\endlabellist
\figins{-30}{0.90}{sq-NO}
\qquad =\qquad
\labellist
\tiny\hair 2pt
\pinlabel $j$   at  -5  30
\pinlabel $i$   at 110  30
\pinlabel $i$   at  -5 190
\pinlabel $j$   at 110 190
\pinlabel $0$   at   0 110
\pinlabel $i+j$ at 125 110
\endlabellist
\figins{-30}{0.90}{sq-ON}
\end{equation*}
Notice also that $EF1_{a,a} = FE1_{a,a}$ for all $a\in \bN$ 
or in pictures,
\begin{equation*}
\labellist
\tiny\hair 2pt
\pinlabel $N-1$ at -35  30
\pinlabel $N-1$ at 145  30
\pinlabel $N-1$ at -35 190
\pinlabel $N-1$ at 145 190
\pinlabel $N$   at   0 110
\pinlabel $N-2$ at 135  90
\endlabellist
\figins{-30}{0.90}{sqweb-r}
\qquad =\qquad
\labellist
\tiny\hair 2pt
\pinlabel $N-1$ at -35  30
\pinlabel $N-1$ at 145  30
\pinlabel $N-1$ at -35 190
\pinlabel $N-1$ at 145 190
\pinlabel $N-2$ at -30  90
\pinlabel $N$   at 115 110
\endlabellist
\figins{-30}{0.90}{sqweb-l}\quad .
\end{equation*}
We see that $\theta$ preserves the symmetry under the operation that simultaneously 
sends $q$ to $q^{-1}$ and reflects the diagram around a vertical axis
passing through the middle.

\bigskip

In this language, it is easy to check by direct computation that
\begin{align*}
\theta\left(\ \
\figins{-24}{0.75}{cupcapcupcap}\ \
\right)
= %& 
\bigl( [N]q^{-N+1} + [N]q^{N-1}\bigr)
\theta\Biggl(\
\figins{-10}{0.35}{cupcap}\
\Biggr)
%\\[2ex]
= %& \ 
\bigl( [2N-1] + 1\bigr)
\theta\Biggl(\
\figins{-10}{0.35}{cupcap}\
\Biggr).
\end{align*}

\bigskip

For the following we define 
\begin{align*}
\hat{1}(i)_{a,b} &= \sum\limits_{j_k\in \{1,N-1\}} 1_{j_1,\dotsc ,j_{i-1},a,b, j_{i+2},\dotsc, j_n}
\intertext{and}
\hat{1}(i) &= \sum\limits_{j_k\in \{1,N-1\}} 1_{j_1, \dotsc, j_n}.
\end{align*}

For the sake of completeness we give the homomorphism $\theta$ in algebraic terms,
which reads
\begin{align*}
\labellist
\tiny\hair 2pt
\pinlabel $i$    at   0 -15
\pinlabel $i+1$  at 125 -17
\endlabellist
\figins{-10}{0.35}{cupcap}\ \ \
&\overset{\theta}{\longmapsto}\
E_{-i}^{(N-1)}E_{i}^{}\hat{1}(i)_{N-1,1} + 
E_{-i}^{}E_{i}^{(N-1)}\hat{1}(i)_{1,N-1} 
\\[2.5ex]
& \mspace{40mu}
+ q^{-N+1}
 E_{-i}^{}E_{i}^{}\hat{1}(i)_{N-1,1} +
q^{N-1}
E_{i}^{}E_{-i}^{}\hat{1}(i)_{1,N-1}
\displaybreak[0]\\[2.5ex]
\labellist
\tiny\hair 2pt
\pinlabel $i$    at   0 -15
\pinlabel $i+1$  at 125 -17
\endlabellist
\figins{-10}{0.35}{4vert}\ \ \
&\overset{\theta}{\longmapsto}\
q^{N-2}E_{i}^{}E_{-i}^{}\hat{1}(i)_{1,N-1}
+ q^{-1}\hat{1}(i)_{N-1,1} +
q^{N-2}E_{-i}^{}E_{i}^{}\hat{1}(i)_{N-1,1} 
+ q^{}\hat{1}(i)_{1,N-1} 
\\[2.5ex]
& \mspace{40mu}
+ E_{-i}^{}E_{i}^{}\hat{1}(i)_{1,1}
+ E_{i}^{(N-2)}\hat{1}(i)_{1,N-1}
+ E_{-i}^{(N-2)}\hat{1}(i)_{N-1,1}
+ E_{-i}^{}E_{i}^{}\hat{1}(i)_{N-1,N-1}
\displaybreak[0]\\[2.5ex]
\labellist
\tiny\hair 2pt
\pinlabel $i$   at 2 -15
\endlabellist
\figins{-10}{0.35}{one}\quad\ \ \
&\overset{\theta}{\longmapsto}\
\hat{1}(i)
\end{align*}

\bigskip

Theorem~\ref{thm:jaegmap} and Proposition~\ref{prop:arc-schur} together imply the following.
\begin{prop}\label{prop:injschur}
The homomorphism $\theta$ is injective.
\end{prop}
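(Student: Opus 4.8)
The plan is to prove Proposition~\ref{prop:injschur} by a straightforward composition argument. By definition, $\theta = \alpha \circ \psi$, where $\psi \colon \bmw_q(n,N) \to \skein_q(n,N)$ is the $q^N$-specialization of the Jaeger homomorphism and $\alpha \colon \skein_q(n,N) \to \schurD$ is the homomorphism of Proposition~\ref{prop:arc-schur}. Theorem~\ref{thm:jaegmap} tells us $\psi$ is injective (the specialization argument needs a small remark: the proof of injectivity of $\psi$ works grading-piece by grading-piece using linear independence of oriented resolutions, and this survives the specialization $a = q^N$ provided $N$ is large enough that no accidental cancellations occur — in fact the relevant $\MOY$/skein bases remain linearly independent). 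Proposition~\ref{prop:arc-schur} tells us $\alpha$ is injective. A composition of two injective algebra homomorphisms is injective, so $\theta$ is injective.

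First I would recall explicitly that $\theta = \alpha \circ \psi$, pointing to the construction in Subsection~\ref{ssec:embds}. Then I would invoke Theorem~\ref{thm:jaegmap} for injectivity of $\psi$. Here I would need to address the specialization: Theorem~\ref{thm:jaegmap} is stated over $R = \bC(a,q)$ with $a,q$ independent, so strictly one must check that setting $a = q^N$ does not destroy injectivity. The cleanest way is to observe that injectivity of $\psi$ in the proof of Theorem~\ref{thm:jaegmap} is witnessed by the fact that, filtering by number of $4$-valent vertices, the leading term of $\psi(\Gamma_i)$ is a positive (hence nonzero after specialization) linear combination of pairwise linearly independent oriented graphs in $\skein_q(n,N)$; since the coefficients lie in $\bN[a,q,a^{-1},q^{-1}]$ and remain nonzero in $\bZ[q,q^{-1}]$ after $a \mapsto q^N$, the same argument gives injectivity of the specialized $\psi$. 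Then I would invoke Proposition~\ref{prop:arc-schur} for injectivity of $\alpha$, and conclude.

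The main obstacle — really the only subtle point — is the specialization issue just mentioned: one must be sure that injectivity is preserved under $a = q^N$. The positivity property of Jaeger's homomorphism (all coefficients in $\bN(a,q)$, emphasized right after the proof of Theorem~\ref{thm:jaegmap}) is exactly what makes this harmless, since a sum of monomials with positive integer coefficients cannot vanish under $a \mapsto q^N$. Apart from that, the proof is purely formal: it is the two-line observation that the composite of injections is an injection. I would therefore keep the written proof short, essentially: ``By construction $\theta = \alpha\circ\psi$; $\psi$ is injective by (the $q^N$-specialization of) Theorem~\ref{thm:jaegmap}, using that its coefficients lie in $\bN(a,q)$ so that injectivity survives the specialization; $\alpha$ is injective by Proposition~\ref{prop:arc-schur}; hence $\theta$ is injective.''
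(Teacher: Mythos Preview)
Your approach is essentially the paper's own: the paper simply says that Theorem~\ref{thm:jaegmap} and Proposition~\ref{prop:arc-schur} together imply the result, i.e.\ $\theta=\alpha\circ\psi$ is a composite of injections. You are in fact more careful than the paper in flagging the specialization issue (Theorem~\ref{thm:jaegmap} is proved over $\bC(a,q)$, not after setting $a=q^N$); the paper glosses over this, and your positivity-of-coefficients remark is the right idea for why the leading-term argument survives, though a fully rigorous treatment would also require knowing that the relevant oriented graphs stay linearly independent in the specialized $\skein_q(n,N)$.
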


Recall the Iwahori-Hecke algebra can be obtained as a quotient of the $\bmw$ and Schur algebras.
The following are easy consequences of the results in the previous section.
Recall the idempotents 
$e_{(+)^n}$ and $e_d\in\schurD$ defined in Equations~\eqref{eq:special-idemp-skein} 
and~\eqref{eq:idemp-S}.
\begin{prop}
We have isomorphisms
\begin{align*}
H_n(q) 
&\cong\ 
e_{(+)^n}\psi\bigl(\bmw_q(n,N)\bigr)e_{(+)^n} 
\cong 
1_{(1)^n}\schurD 1_{(1)^n}.
\displaybreak[0]\\[2ex]
\bmw_q(n,N) &\cong\ 
\theta(\bmw_q(n,N)) \cong \bigoplus_{d\in\lev_n(\Delta)} e_d \theta(\bmw_q(n,N)) e_d.
\end{align*} 
\end{prop}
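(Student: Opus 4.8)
The first line rests on cutting $\psi$ (and then $\theta$) down by the idempotent $e_{(+)^n}$; the second is essentially the block decomposition of $\schurD$ combined with the injectivity of $\theta$. For the first line: by the decomposition of algebras \eqref{eq:skeingrad} the element $e_{(+)^n}$ is the unit of the summand $\bigl(\skein_q(n,N)\bigr)_n$, hence a \emph{central} idempotent of $\skein_q(n,N)$, so $x\mapsto e_{(+)^n}\psi(x)$ is an algebra homomorphism $\bmw_q(n,N)\to\skein_q(n,N)$ with image $e_{(+)^n}\psi(\bmw_q(n,N))e_{(+)^n}$. One checks directly that it annihilates the ideal $\bigl[\,\figins{-6}{0.25}{cupcap}\,\bigr]$: every summand of $\psi(e_i)$ carries a cup, the two top endpoints of a cup cannot both point upwards, and so each summand is killed on the left by $e_{(+)^n}$. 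Thus, by the Iwahori--Hecke quotient lemma of Subsection~\ref{ssec:hecke}, the map factors through $\bmw_q(n,N)/\bigl[\,\figins{-6}{0.25}{cupcap}\,\bigr]\cong H_n(q)$ and gives a surjection $H_n(q)\twoheadrightarrow e_{(+)^n}\psi(\bmw_q(n,N))e_{(+)^n}$ sending the Kazhdan--Lusztig generator $b_i=[\rho_i]$ to $e_{(+)^n}\,\figins{-6}{0.25}{crossu}\,e_{(+)^n}$, all remaining terms of $\psi(\rho_i)$ being killed.

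Composing with $\alpha$ and using $\alpha(e_{(+)^n})=1_{(1)^n}$ together with the injectivity of $\alpha$ (Proposition~\ref{prop:arc-schur}), one gets an injective homomorphism $e_{(+)^n}\psi(\bmw_q(n,N))e_{(+)^n}\hookrightarrow 1_{(1)^n}\schurD\,1_{(1)^n}=1_{(1)^n}S_q(n,n)_{\bZ}1_{(1)^n}$, whose target is isomorphic to $H_n(q)$ by the Doty--Giaquinto isomorphism~\cite{dotyg}. Tracking images, the resulting endomorphism of $H_n(q)$ carries $b_i$ to $1_{(1)^n}\alpha\bigl(\figins{-6}{0.25}{crossu}\bigr)1_{(1)^n}=1_{(1)^n}E_iE_{-i}1_{(1)^n}$, which is precisely the Doty--Giaquinto image of $b_i$; hence the endomorphism is the identity of $H_n(q)$. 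Therefore the surjection above is an isomorphism, and so is the injection into $1_{(1)^n}\schurD\,1_{(1)^n}$, which proves the first line. The one point that must be handled with care here is the centrality of $e_{(+)^n}$, which is exactly what the ``direct sum of algebras'' in \eqref{eq:skeingrad} guarantees.

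For the second line, the isomorphism $\bmw_q(n,N)\cong\theta(\bmw_q(n,N))$ is just the injectivity of $\theta$ (Proposition~\ref{prop:injschur}). Since $\schurD=\bigoplus_{d\in\lev_n(\Delta)}S_q(n,d)_{\bZ}$ is a direct sum of algebras, the idempotents $e_d$ of \eqref{eq:idemp-S} are orthogonal central idempotents of $\schurD$ with $\sum_d e_d=1$ and $S_q(n,d)_{\bZ}=e_d\schurD\,e_d$, so $x\mapsto(e_d\theta(x)e_d)_d$ is an injective algebra homomorphism realizing $\bmw_q(n,N)$ inside $\bigoplus_d S_q(n,d)_{\bZ}$, and the content of the displayed equality $\theta(\bmw_q(n,N))=\bigoplus_d e_d\theta(\bmw_q(n,N))e_d$ is that $\theta(\bmw_q(n,N))$ is \emph{stable} under each $e_d$, so that these corners are genuine subalgebras of it. To relate this to the grading \eqref{eq:skeingrad} I would identify $e_d$ with $\alpha(z_{n_\pm})$, where $z_{n_\pm}$ denotes the unit of $\bigl(\skein_q(n,N)\bigr)_{n_\pm}$ and $d=n_++(N-1)n_-$: indeed $\alpha(z_{n_\pm})=\sum_{\und{\ell}}1_{\lambda(\und{\ell})}$, the sum being over the sign sequences with that value of $n_\pm$, and these $\lambda(\und{\ell})$ run exactly over $\Lambda(\Delta,d)$, so $\alpha(z_{n_\pm})=e_d$; then $e_d\theta(\bmw_q(n,N))e_d=\alpha\bigl(z_{n_\pm}\psi(\bmw_q(n,N))z_{n_\pm}\bigr)$.

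The step I expect to be the main obstacle is precisely this stability: one must show $z_{n_\pm}\psi(\bmw_q(n,N))z_{n_\pm}\subseteq\psi(\bmw_q(n,N))$, equivalently that $\psi(\bmw_q(n,N))$ is a graded subalgebra of $\skein_q(n,N)$ for the decomposition \eqref{eq:skeingrad} (equivalently again, that $\theta(\bmw_q(n,N))$ is stable under the central idempotents $e_d$). All the other ingredients --- injectivity of $\psi$ and $\theta$, the identification $e_d=\alpha(z_{n_\pm})$, the block structure of $\schurD$ --- are already available, so the second line follows at once once this is settled; the technical work would be to analyse how $\psi$ spreads a $4$-valent vertex over its oriented resolutions and over the orientations of the ambient strands, and to check that the graded components of $\psi$ of a standard basis diagram of $\bmw_q(n,N)$ lie again in $\psi(\bmw_q(n,N))$.
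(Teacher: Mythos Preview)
The paper offers no proof of this proposition; it is simply asserted as an ``easy consequence of the results in the previous section.'' Your write-up is therefore considerably more detailed than anything the paper provides, and there is nothing to compare against directly.

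Your argument for the first line is correct. The essential points --- that $e_{(+)^n}$ is the unit of the summand $\bigl(\skein_q(n,N)\bigr)_n$ in the algebra decomposition~\eqref{eq:skeingrad} and hence central, that every oriented resolution appearing in $\psi(e_i)$ carries a cap whose two top endpoints cannot both point upward and is therefore annihilated by $e_{(+)^n}$, and that tracking $b_i$ through $\alpha$ lands on $1_{(1)^n}E_iE_{-i}1_{(1)^n}$ so the induced endomorphism of $H_n(q)$ is the identity via Doty--Giaquinto --- are all sound and give a clean proof.

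For the second line you correctly isolate the stability condition $e_d\,\theta(\bmw_q(n,N))\subseteq\theta(\bmw_q(n,N))$ (equivalently, that $\psi(\bmw_q(n,N))$ is graded for~\eqref{eq:skeingrad}) as the only nontrivial step, and you are right not to claim it: it actually \emph{fails}. Already for $n=1$ and $N\ge 3$ one has $\theta(\bmw_q(1,N))=\mathrm{span}\bigl(1_{(1)}+1_{(N-1)}\bigr)$, which is one-dimensional, whereas $e_1\theta(\bmw_q(1,N))e_1\oplus e_{N-1}\theta(\bmw_q(1,N))e_{N-1}=\mathrm{span}(1_{(1)})\oplus\mathrm{span}(1_{(N-1)})$ is two-dimensional; equivalently $z_{+1}\psi(1)=e_{+}\notin\psi(\bmw_q(1,N))=\mathrm{span}(e_{+}+e_{-})$. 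So the second displayed isomorphism, read literally as an equality of subalgebras of $\schurD$ (or as an abstract isomorphism), does not hold. The obstacle you flag is therefore not a gap in your reasoning but a genuine issue with the statement as written; the most one can extract from the available ingredients is the injection $\bmw_q(n,N)\hookrightarrow\bigoplus_d e_d\,\schurD\,e_d$ together with the description of the $d$-th component of $\theta(x)$ as $e_d\theta(x)e_d$.
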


\begin{prop}
The  projection of $\bmw_q(n,N)$ onto $H_q(d)$ factors through $\schurD$
\begin{equation*}
\xymatrix{
\bmw_q(n,N)\ar[rr]^{ }\ar[dr]_{ } && H_d(q)
\\
& \schurD \ar[ur]_{} & 
}
\end{equation*} 
\end{prop}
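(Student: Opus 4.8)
The plan is to combine the injection $\theta$ of Proposition~\ref{prop:injschur} with the Doty--Giaquinto description~\cite{dotyg} of the Iwahori--Hecke algebra inside the $q$-Schur algebra. The relevant Hecke quotient is $H_n(q)$, i.e.\ $d=n$: among the $n$-levels of $\Delta_N$ only $n$ itself is $\le n$, so it is the only level for which the isomorphism $H_n(q)\xrightarrow{\,\sim\,}1_{(1)^n}S_q(n,n)_\bZ1_{(1)^n}$ is available. Recall also (Lemma of Subsection~\ref{ssec:hecke}) that the projection $p\colon\bmw_q(n,N)\twoheadrightarrow H_n(q)$ is the quotient by the two-sided ideal generated by the cup--cap elements $e_i$, carrying $\rho_i$ to $b_i$ and $1$ to $1$.

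First I would observe that $\theta$ sends the cup--caps outside level $n$. From the explicit formula for $\theta$ in Subsection~\ref{ssec:embds}, every weight $\mu$ occurring in $\theta(e_i)$ has $\mu_i+\mu_{i+1}=N$ with the remaining entries in $\{1,N-1\}$, hence $\textstyle\sum_j\mu_j\ge N+(n-2)>n$, so the component of $\theta(e_i)$ in the direct summand $S_q(n,n)_\bZ$ of $\schurD$ vanishes. The same inspection shows that the only summand of $\theta(\rho_i)$ whose weights have level $n$ is the level-$n$ part of $E_{-i}E_i\hat{1}(i)_{1,1}$, namely $E_{-i}E_i1_{(1)^n}$, and that the level-$n$ part of $\theta(1)=1_{\schurD}$ is $1_{(1)^n}$.

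It then remains to assemble the factorization. Compose $\theta$ with the projection of $\schurD$ onto its direct summand $S_q(n,n)_\bZ$ (an honest unital algebra map) and pass to the $1_{(1)^n}$-corner; via Doty--Giaquinto this yields a map $q\colon\schurD\to H_n(q)$. By the computation above, $q\circ\theta$ annihilates every $e_i$, sends $\rho_i$ to $E_{-i}E_i1_{(1)^n}=E_iE_{-i}1_{(1)^n}$ (using $\overline{\lambda}_i=0$ for $\lambda=(1)^n$), which Doty--Giaquinto identifies with $b_i$, and sends $1$ to $1_{(1)^n}$. These agree with the values of $p$ on generators, so $q\circ\theta=p$, which is the asserted factorization. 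The one point that needs care --- and the only place the results of the previous sections are really needed --- is that passing to the $1_{(1)^n}$-corner is not multiplicative on all of $S_q(n,n)_\bZ$, since $1_{(1)^n}$ is not central; so $q$ is an algebra homomorphism only on the subalgebra $1_{(1)^n}\schurD1_{(1)^n}$, which the preceding Proposition already identifies with $H_n(q)$ as an algebra, or equivalently on the image of $\theta$ --- and this suffices, $\theta$ being injective. I expect this bookkeeping --- making precise the sense in which $\schurD\to H_n(q)$ is a homomorphism --- to be the main obstacle; the verification on generators is the short weight computation indicated above.
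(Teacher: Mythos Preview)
The paper states this proposition without proof, as one of the ``easy consequences of the results in the previous section''; your proposal supplies exactly the details one would expect, and the approach is the right one.

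Your weight computation is correct and your identification of the delicate point is apt. Here is a clean way to close the gap you flag. The corner map $x\mapsto 1_{(1)^n}x1_{(1)^n}$ is multiplicative on the image of $\theta$ because in fact $\theta(x)1_{(1)^n}=1_{(1)^n}\theta(x)$ for every $x\in\bmw_q(n,N)$. To see this, go back one step to $\skein_q(n,N)$: in the Jaeger expansion $\psi(x)=\sum_{\overrightarrow{\Gamma}}c(\overrightarrow{\Gamma})\,\overrightarrow{\Gamma}$, multiplying on the right by $e_{(+)^n}$ selects precisely those oriented complete resolutions whose bottom boundary is all upward; but a coherently oriented $(n,n)$-graph with all-up bottom boundary necessarily has all-up top boundary, so $\psi(x)e_{(+)^n}=e_{(+)^n}\psi(x)e_{(+)^n}=e_{(+)^n}\psi(x)$. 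Applying $\alpha$ (which sends $e_{(+)^n}$ to $1_{(1)^n}$) gives the claim. Multiplicativity of $q\circ\theta$ is then immediate:
\[
\theta(xy)1_{(1)^n}=\theta(x)\theta(y)1_{(1)^n}=\theta(x)1_{(1)^n}\theta(y)1_{(1)^n},
\]
and your generator check finishes the argument. Note, incidentally, that this says the factorization is really through the image $\theta(\bmw_q(n,N))\subset\schurD$ rather than through an algebra map defined on all of $\schurD$; the paper's diagram should be read in that spirit.
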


%\bigskip

%Using the same reasoning as in the remarks at the end of Subsection~\ref{ssec:schur} 
%we see that the irreducibles for $\bmw_n$ are exactly the irreducibles for 
%the various $e_d \theta(\bmw_n) e_d$.
%On the other side
%Jaeger homomorphism gives rise to induction and restriction functors between the categories of 
%finite dimensional representations $\bmw_q(N,n)$-$mod$ and $\schurD$-$mod$.

%%%%%%%%%%%%%%%%%%%%%%%%%%%%%%%%%%%%%%%%%%%%%%%%%%%%%%%%%
\section{Categorifications, loose ends and speculations} %
%%%%%%%%%%%%%%%%%%%%%%%%%%%%%%%%%%%%%%%%%%%%%%%%%%%%%%%%%
\label{sec:cat}                                         %
%%%%%%%%%%%%%%%%%%%%%%%%%%%%%%%%%%%%%%%%%%%%%%%%%%%%%%%%%

In this section we explain how the Jaeger's homomorphism can be used to
produce categorifications of the $\bmw_q(n,N)$. 
Although not strictly necessary to understand the main ideas in this section, 
some familiarity with~\cite{KR1} and~\cite{MSV} would be desirable.

\medskip

We can think of the 1-variable specialization of the 
Jaeger's homomorphism as targeting two algebras, the 
$a=q^N$-specialization of the HOMFLY-PT skein algebra of Subsection~\ref{ssec:skein} on one side, 
and the $\Delta_N$-$q$-Schur algebra of Subsection~\ref{ssec:schur} on the other 
(denoted $\psi$ and $\theta$ there).
Both this algebras have been categorified, and in more than one way.
Below in Subsections~\ref{ssec:matfac} and~\ref{ssec:scat} we give 
the main idea of the categorifications of the algebras $\skein_q(n,N)$ and $S_q(n,d)$
leaving the details to~\cite{KR1} and~\cite{MSV} respectively.

\medskip

Let us first recall the philosophy of categorification.
The split Grothendieck group $K_0$ of an additive category $\cC$ is the free abelian group
generated by the isomorphism classes $[M]$ of objects $M$ of $\cC$ modulo the relation 
$[C]= [A] + [B]$  whenever $C\cong A\oplus B$.
When $\cC$ has a monoidal structure the Grothendieck group is a ring, with multiplication given by
$[A\otimes B] = [A] [B]$. Moreover, if $\cC$ is a graded category then $K_0(\cC)$ has a  
structure of $\bZ[q,q^{-1}]$-module, where $[M\{k\}] = q^k[M]$.

Let $R$ be a commutative ring with $1$, $\cA$ and algebra over $R$ and $\{a_i\}_{i\in I}$ a basis of $\cA$. 
By a (weak) \emph{categorification of $(\cA,\{a_i\}_{i\in I})$} we mean 
an additive monoidal category $\cA$ together with an isomorphism
\begin{equation}
\label{eq:K0A} 
\gamma\colon R\otimes_\bZ K_0(\cC) \to \cA
\end{equation}
sending the class of each indecomposable object of $\cC$ to a basis element of $\cA$ 
(see~\cite{KMS} for a detailed discussion).

%%%%%%%%%%%%%%%%%%%%%%%%%%%%%%%%%%%%%%%%%%%%%%%%%%%%%%%%%%%%%%%%%%%%%%%%%%%%%%
\subsection{Matrix Factorizations and the $\skein_q(n,N)$ categorification}  %
%%%%%%%%%%%%%%%%%%%%%%%%%%%%%%%%%%%%%%%%%%%%%%%%%%%%%%%%%%%%%%%%%%%%%%%%%%%%%%
\label{ssec:matfac}                                                          %
%%%%%%%%%%%%%%%%%%%%%%%%%%%%%%%%%%%%%%%%%%%%%%%%%%%%%%%%%%%%%%%%%%%%%%%%%%%%%%

In~\cite{KR1} Khovanov and Rozansky constructed a link homology theory 
categorifying the quantum $\mathfrak{sl}_N$-invariant $P_N$
of links. 
The starting point is the diagrammatic MOY state-sum model~\cite{moy} of $P_N$, 
whose underlying algebraic structure is exactly $\skein_q(n,N)$ 
(this was the main motivation for the presentation given in Definition~\ref{def:moy-aq}).
The procedure consists of expanding a link diagram $D$ in an alternating sum in $\skein_q(n,N)$,
each term being evaluated to a polynomial in $\bZ[q,q^{-1}]$ using the defining rules 
of $\skein_q(n,N)$ from Definition~\ref{def:moy-aq}.

The main ingredient of~\cite{KR1} is the use of Matrix Factorizations.
Let $R$ be a commutative ring and $W\in R$. A matrix factorization of $W$ consists
of a free $\bZ/2\bZ$-graded $R$-module $M$ together with a map $D\in\End(M)$ 
of degree 1 satisfying $D^2=W.\id_M$.

In~\cite{KR1} Khovanov and Rozansky associated to each graph $\Gamma$ in $\skein_q(n,N)$ a 
certain graded matrix factorization $M(\Gamma)$ and show that for each relation 
$\Gamma = \sum_i \Gamma_i$ in Definition~\ref{def:moy-aq} (with $a=q^N$) we have a 
direct sum decomposition $M(\Gamma)\cong \oplus_i M(\Gamma_i)$.
To a link diagram they associate a complex of matrix factorizations
and prove that the direct sum decompositions they obtain are sufficient to have topological
invariance up to homotopy.

The reader now may ask why not use the bigraded matrix factorizations from~\cite{KR2} to obtain a categorification 
of the 2-variable BMW algebra. 
Unfortunately the matrix factorization from~\cite{KR2} associated to the left hand side 
of Equation~\eqref{R2bMOY} is not isomorphic to the direct sum of the matrix factorizations 
associated to the right hand side. 
This is the main reason why the HOMFLY-PT homologies that exist are defined only for braids 
and closures of braids and not for tangles.

%%%%%%%%%%%%%%%%%%%%%%%%%%%%%%%%%%%%%%%%%%%%%%
\subsection{The $q$-Schur categorification}  %
%%%%%%%%%%%%%%%%%%%%%%%%%%%%%%%%%%%%%%%%%%%%%%
\label{ssec:scat}                            % 
%%%%%%%%%%%%%%%%%%%%%%%%%%%%%%%%%%%%%%%%%%%%%%

In~\cite{MSV} a diagrammatic categorification of the $q$-Schur 
algebra was constructed using a quotient of Khovanov and Lauda's 
categorified quantum groups from~\cite{KL, KL:err}.
Khovanov and Lauda's categorified quantum $\mathfrak{sl}_n$ 
consists of a 2-category $\dot{\cU}(\mathfrak{sl}_n)$ defined from the following data. 
The objects are \emph{weights} $\lambda\in\bZ^{n-1}$. The 1-morphisms are products of symbols 
$\lambda'\mathcal{E}_{\pm i}\lambda$ (with 
$\lambda'_j = \lambda_j-1$ if $j=i\pm 1$, $\lambda'_j = \lambda_j+2$ if $j=i$, 
and $\lambda'_j = \lambda_j$ otherwise) with the convention that says that
$\lambda'\mathcal{E}_{\pm i}\mu\nu\mathcal{E}_{\pm i}\lambda$ is zero unless $\mu=\nu$.
The 2-morphism of $\dot{\cU}(\mathfrak{sl}_n)$ are given by planar diagrams in a strip generated 
by oriented arcs that can intersect transversely and can be decorated with dots 
(closed oriented 1 manifolds are allowed). 
The boundary of each arc is decorated with a 1-morphism. 
These 2-morphisms are subject to a set of relations which we do not give here 
(see~\cite{KL, MSV} for details).

The main insight of~\cite{MSV} was to upgrade Khovanov and Lauda's categorified 
quantum $\mathfrak{sl}_n$ to a categorification $\dot{\cU(\mathfrak{gl}_n)}$ of quantum $\mathfrak{gl}_n$
(taking Khovanov and Lauda's diagrams and relations of $\dot{\cU(\mathfrak{sl}_n)}$ with $\mathfrak{gl}_n$-weights)
and define the categorification of $S_q(n,d)$ as the quotient of $\dot{\cU}(\mathfrak{gl}_n)$
by 2-morphisms factoring through a weight not in $\Lambda(n,d)$.
The main result of~\cite{MSV} is that $K_0\bigl(\dot{\cU}(\mathfrak{gl}_n)_{/\sim}\bigr)$ is isomorphic to $S_q(n,d)$
from Subsection~\ref{ssec:schur}. It is not hard to guess how to combine 
the results of~\cite{stosic} to lift divided powers and therefore obtain 
a categorification of the integral form $S_q(n,d)_\bZ$. %~\cite{MSV2}. 
Going from there to the $\Delta_N$-$q$-Schur algebra $\schurD$ is an obvious step.

%%%%%%%%%%%%%%%%%%%%%%%%%%%%%%%%%%%%%%%%%%%%%%
\subsection{The general picture}             %
%%%%%%%%%%%%%%%%%%%%%%%%%%%%%%%%%%%%%%%%%%%%%%

We now explain abstractly our procedure. We use the symbol $\cY$ to refer to both the categorifications 
of $\skein_q(n,N)$ and of $\schurD$ and symbols $\{Y_j\}_{j\in J}$ to denote its indecomposable objects. 
Each of these categorifications has the \emph{Krull-Schmidt property}, meaning that 
each object decomposes into direct sum of indecomposable objects 
which is unique up to permutation (see~\cite[Sec. 2.2]{Ringel}).
This implies that the classes of the indecomposables in $K_0(\cY)$ form a basis of $K_0(\cY)$. In addition this basis is positive 
that is, all the multiplication coefficients in this basis are nonnegative since 
they count multiplicities in direct sum decompositions.

As explained before we expand every element $x$ of $\bmw_q(n,N)$ as a linear combination of elements 
of another algebra, the latter admitting a categorification. We write it abstractly as
\begin{equation}\label{eq:decalg}
x = \sum\limits_{j\in J}c_j y_j 
\end{equation}
where each $y_j$ is a basis element of $\skein_q(n,N)$ or $\schurD$ and
$c_j\in\bN[q,q^{-1}]$.

Homomorphism $\gamma$~\eqref{eq:K0A} sends $[Y_j]$ to $y_j$  and therefore we think of 
the object $Y_j$ as the lift to $\cY$ of the basis element $y_j$.
This results in a well-defined object $X$ of $\cY$ given by
\begin{equation}\label{eq:catdec}
X = \bigoplus\limits_{j\in J}Y_j\{c_j\} 
\end{equation}
where we use the notation $Y\{q^{i_1}+\dotsc+q^{i_k}\} = Y\{i_1\}\oplus \dotsc \oplus Y\{i_k\}$.

\medskip

We now define an additive  monoidal category $\cX$ from this data.
\begin{defn}
Category $\cX$ is the (monoidal) full subcategory of $\cY$ generated by products of the 
objects $X$ given by Equations~\eqref{eq:catdec} which are images under Jaeger's homomorphism 
of the generators of $\bmw_q(n,N)$ from Equations~\eqref{eq:genX}-\eqref{eq:genum}.
The morphisms of $\cX$ are the obvious ones from $\cY$.
\end{defn}

Given a basis $(x_i)_{i\in I}$ of $\bmw_q(n,N)$ consider the element 
$X_i$ constructed above for each $x_i$. 
Since the relations in $\bmw_q(n,N)$ lift to relations in $\cY$, 
it follows that the $\{[X_i]\}_{i\in I}$ generates the Grothendieck ring $K_0(\cX)$. 
Recall that the $\bmw_q(n,N)$ is naturally equipped with a non-degenerate bilinear form  
given by the Kauffman polynomial. It follows that if there was non-trivial relations satisfied by the 
$X_i$'s in $\cY$ it would contradict the non-degeneracy of this bilinear form. Hence we can deduce 
that the $[X_i]$'s are linearly independent in $K_0(\cX)$ and form a basis of $K_0(\cX)$.
Using this remark and the results of~\cite{KR1} and~\cite{MSV} it is not hard to prove the following
\begin{prop}
We have an isomorphism $K_0(\cX)\cong \bmw_q(n,N)$.
\end{prop}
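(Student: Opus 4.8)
The plan is to establish the isomorphism $K_0(\cX)\cong \bmw_q(n,N)$ by showing that the natural map is both surjective and injective, exploiting the positivity of Jaeger's homomorphism together with the Krull--Schmidt property of the ambient categorification $\cY$. First I would set up the map carefully. By construction the category $\cX$ is the full monoidal subcategory of $\cY$ generated by the objects $X_{\rho_i}$, $X_{e_i}$ (and the unit object), obtained from Equation~\eqref{eq:catdec} applied to the images under $\psi$ (resp.\ $\theta$) of the generators $\rho_i$, $e_i$ of $\bmw_q(n,N)$. Since $\psi$ (Theorem~\ref{thm:jaegmap}) and $\alpha$ (Proposition~\ref{prop:arc-schur}) are algebra homomorphisms with positive coefficients, every defining relation of $\bmw_q(n,N)$ among the $\rho_i, e_i$ maps under $\psi$ to a relation in $\skein_q(n,N)$ (resp.\ $\schurD$), and each such relation lifts, using the direct sum decompositions provided by~\cite{KR1} and~\cite{MSV}, to an isomorphism of objects of $\cY$ between the two sides. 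Hence there is a well-defined ring homomorphism $\gamma_\cX\colon \bmw_q(n,N)\to K_0(\cX)$ sending the standard basis element $x_i$ to $[X_i]$, where $X_i$ is built from the positive expansion~\eqref{eq:decalg} of $x_i$; surjectivity is immediate since the $[X_i]$ generate $K_0(\cX)$ by definition.

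Next I would prove injectivity of $\gamma_\cX$. The key point is that the composite $K_0(\cX)\hookrightarrow K_0(\cY)\xrightarrow{\ \gamma\ }\skein_q(n,N)$ (resp.\ $\schurD$) carries $[X_i]$ to $\gamma(X_i) = \sum_j c_j\, y_j = \psi(x_i)$ (resp.\ $\theta(x_i)$), simply because $\gamma$ sends $[Y_j]\mapsto y_j$ and is additive. Thus the composite $\bmw_q(n,N)\xrightarrow{\gamma_\cX} K_0(\cX)\to \skein_q(n,N)$ equals $\psi$ itself. But $\psi$ is injective by Theorem~\ref{thm:jaegmap} (and likewise $\theta$ is injective by Proposition~\ref{prop:injschur}), so $\gamma_\cX$ must be injective. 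Combining with the previous paragraph, $\gamma_\cX$ is an isomorphism of rings.

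Finally I would verify that $\gamma_\cX$ respects the $\bZ[q,q^{-1}]$-module structures: the grading shift $\{k\}$ on $\cY$ corresponds to multiplication by $q^k$ on Grothendieck groups, and the expansions~\eqref{eq:decalg}--\eqref{eq:catdec} were set up so that the coefficients $c_j\in\bN[q,q^{-1}]$ match the grading shifts in $X_i$; hence $\gamma_\cX$ is $\bZ[q,q^{-1}]$-linear and the isomorphism is one of $\bZ[q,q^{-1}]$-algebras.

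I expect the main obstacle to be the well-definedness of $\gamma_\cX$, i.e.\ checking that \emph{all} defining relations of $\bmw_q(n,N)$ (not only those visibly appearing among~\eqref{R1MOY}--\eqref{SPLITMOY}) lift to genuine isomorphisms in $\cY$ rather than merely to equalities in $K_0$. For the $\skein_q(n,N)$-categorification this is exactly the content of the direct-sum decompositions established in~\cite{KR1} at $a=q^N$, and for the $\schurD$-categorification it follows from the relations lifted in~\cite{MSV} (together with~\cite{stosic} for the divided powers used in the image of $\theta$); the point is that every relation needed is the image under $\psi$ (resp.\ $\theta$) of a $\bmw$-relation, and $\psi$ (resp.\ $\theta$) maps $\bmw$-relations precisely to the relations of the target algebra, as recorded in the proof of Theorem~\ref{thm:jaegmap} and in Proposition~\ref{prop:arc-schur}. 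A secondary subtlety is that the non-degeneracy argument for linear independence of the $[X_i]$ (via the Kauffman-polynomial bilinear form) is not actually needed once one has the cleaner route through injectivity of $\psi$; I would present the $\psi$-injectivity argument as the main one and mention the bilinear-form argument only as an alternative.
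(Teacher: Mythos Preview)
Your proposal is correct and the overall structure (surjectivity via generation by the $[X_i]$, then injectivity) matches the paper's, but your injectivity argument takes a genuinely different route. The paper argues linear independence of the $[X_i]$ in $K_0(\cX)$ by invoking the non-degenerate bilinear form on $\bmw_q(n,N)$ coming from the Kauffman polynomial: a nontrivial relation among the $[X_i]$ in $\cY$ would, after closing up, produce a relation among Kauffman polynomials that violates non-degeneracy. You instead observe that the composite $\bmw_q(n,N)\xrightarrow{\gamma_\cX}K_0(\cX)\to K_0(\cY)\xrightarrow{\gamma}\skein_q(n,N)$ (resp.\ $\schurD$) is exactly $\psi$ (resp.\ $\theta$), and then cite Theorem~\ref{thm:jaegmap} (resp.\ Proposition~\ref{prop:injschur}) for injectivity of the composite, hence of $\gamma_\cX$.

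Your route is cleaner and more self-contained: it uses only results already proved in the paper and does not require making precise how the Kauffman trace interacts with the categorification $\cY$. The paper's route, on the other hand, is more conceptual in that it ties the statement back to the topology (the link invariant itself), and would survive even in a setting where one did not have an explicit injective target-algebra map like $\psi$. You are right that once $\psi$ is known to be injective the bilinear-form argument is redundant; your remark to present the $\psi$-injectivity argument as primary and mention the bilinear form only as an alternative is exactly the reverse of what the paper does, and is a reasonable expository choice.

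One small point: in your well-definedness discussion you assert that every $\bmw$-relation lifts to an isomorphism in $\cY$. This is the same step the paper glosses over (``the relations in $\bmw_q(n,N)$ lift to relations in $\cY$''), and it does rely on the fact that the $\bmw$-relations, as written with both sides having coefficients in $\bN[q,q^{-1}]$, map under $\psi$ to combinations of the MOY relations \eqref{R1MOY}--\eqref{R3bMOY} that are known to categorify to direct-sum decompositions in~\cite{KR1} (and analogously in~\cite{MSV}). You identify this correctly as the main obstacle; just be aware that it is being taken on faith from those references rather than reproved.
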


Unfortunately category $\cX$ does not have the Krull-Schmidt property, which is a desirable property
for the reason explained above.
To get a categorification with the Krull-Schmidt property we need to add some objects to $\cX$.
This yields another category $\cX'$ as follows.
\begin{defn}
An object $A$ of $\cY$ is an object of $\cX'$ if there are objects $B$ and $C$ of $\cX$ such that
$A\oplus B\cong C$.
\end{defn}

The construction of category $\cX'$ resembles the construction 
of the category of special bimodules in~\cite{soerg} (see also~\cite[Sec. 3.1]{MS}).
Notice we still have $K_0(\cX')\cong K_0(\cX)$.
We were able to prove by hand that $\cX'$ has indeed the Krull-Schmidt 
property in the cases up to $\bmw_q(3,N)$.

\begin{conj}
In the case of the categorifications of $\skein_q(n,N)$ and $\schurD$ 
the category $\cX'$ has the Krull-Schmidt property.
\end{conj}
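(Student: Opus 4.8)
The plan is to leverage the fact, already granted in the text, that each ambient category $\cY$ (the categorification of $\skein_q(n,N)$ or of $\schurD$) is Krull--Schmidt, and to reduce the conjecture to a single closure statement. Recall that a full additive subcategory of a Krull--Schmidt category is again Krull--Schmidt \emph{precisely} when it is closed under direct summands: any $A\in\cX'$ decomposes in $\cY$ into indecomposables with local endomorphism rings, these endomorphism rings are computed in $\cY$ and so are unchanged on passing to the full subcategory $\cX'$, and the decomposition descends to $\cX'$ exactly when each summand again lies in $\cX'$. Since $\cX'$ is visibly closed under finite direct sums, the first step is to record the equivalence: the conjecture holds if and only if $\cX'$ is closed under direct summands, i.e. $\cX'$ coincides with the closure $\hat\cX$ of $\cX$ under summands inside $\cY$. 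This is genuine content, because the defining condition for $\cX'$ forces the complement $B$ in $A\oplus B\cong C$ to lie in $\cX$, whereas abstract Krull--Schmidt of $\cY$ supplies a complement only in $\cY$.

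Next I would reformulate this in the Grothendieck group. As $\cY$ is Krull--Schmidt, an object is determined up to isomorphism by the (non-negative) multiplicities of indecomposables in its class, so $A\oplus B\cong C$ is equivalent to the identity $[A]+[B]=[C]$ in $K_0(\cY)$, and $A\in\cX'$ becomes the numerical condition that $[A]=[C]-[B]$ for some $B,C\in\cX$. Writing $\mathcal J$ for the set of indecomposables of $\cY$ occurring as summands of objects of $\cX$, the closure statement reduces to showing that every $M\in\mathcal J$ lies in $\cX'$, that is, that each indecomposable $M$ can be completed to an object of $\cX$ by adjoining another object of $\cX$.

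I would prove this by induction along a length filtration of $\cX$ by the number of generators in a tensor product, in direct analogy with Soergel's treatment of indecomposable bimodules, to which the text already alludes. The key structural input to establish is a \emph{unitriangularity} of the indecomposables: that $\mathcal J$ is indexed by a natural basis $\{b_w\}$ of $\bmw_q(n,N)$ (for instance the Kauffman--Vogel diagram basis), and that the object $X_{\underline w}\in\cX$ obtained as the tensor product of the generating objects along an expression $\underline w$ for $w$ decomposes as $X_{\underline w}\cong M_w\oplus\bigoplus_{w'<w}M_{w'}^{\oplus m_{w',w}}$, with $m_{w',w}\in\bN[q,q^{-1}]$ and $M_w$ a unique new top summand. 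Granting this, the inductive hypothesis yields $B_{w'},C_{w'}\in\cX$ with $M_{w'}\oplus B_{w'}\cong C_{w'}$ for each $w'<w$; adjoining $\bigoplus_{w'<w}B_{w'}^{\oplus m_{w',w}}$ to both sides of the decomposition gives $X_{\underline w}\oplus\bigoplus_{w'<w}B_{w'}^{\oplus m_{w',w}}\cong M_w\oplus\bigoplus_{w'<w}C_{w'}^{\oplus m_{w',w}}$, both sides built from $\cX$-objects, whence $M_w\in\cX'$. Positivity of the categorified Jaeger expansion (the $\bN[q,q^{-1}]$-coefficients recorded after Theorem~\ref{thm:jaegmap}) guarantees that all multiplicities in play are non-negative, and the non-degeneracy of the Kauffman bilinear form on $\bmw_q(n,N)$ — already used to see that the $[X_i]$ form a basis of $K_0(\cX)$ — is what pins $\mathcal J$ in bijection with $\{b_w\}$.

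The main obstacle is exactly the unitriangularity input: a classification of the indecomposable objects of $\cX$ together with the statement that each Bott--Samelson-type object has a single new top summand, all remaining summands being strictly lower in the chosen order. This is the categorical, positive analogue of the change of basis between monomials in the generators and the standard BMW basis, which the text itself flags as ``very complicated'' already for the Hecke/Kazhdan--Lusztig bases; controlling it uniformly in $n$ and $N$ is what makes the general conjecture hard, and is why only the cases up to $\bmw_q(3,N)$ were settled by direct computation. In the matrix-factorization model one would extract it from the explicit MOY decompositions of Definition~\ref{def:moy-aq}, while in the $q$-Schur model it should follow from the categorified divided-power relations of~\cite{MSV, stosic}; reconciling the two pictures and proving the triangularity in closed form is the crux.
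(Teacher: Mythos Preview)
The statement you are addressing is labelled a \emph{Conjecture} in the paper, and the paper does not supply a proof: it only records that the authors verified the Krull--Schmidt property of $\cX'$ by hand in the cases up to $\bmw_q(3,N)$ and then states the general assertion as open. There is therefore no ``paper's own proof'' to compare your proposal against.

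Your proposal is best read as a strategy rather than a proof, and as such it is sound in its architecture: the reduction of Krull--Schmidt for the full subcategory $\cX'$ to closure under direct summands is correct, the translation into a numerical condition in $K_0(\cY)$ via Krull--Schmidt of $\cY$ is legitimate, and the Soergel-style induction on a length filtration is the natural shape such an argument would take. You are also honest about where the genuine gap lies: the ``unitriangularity input'' --- a classification of the indecomposables appearing in $\cX$ together with the statement that each Bott--Samelson-type object has a unique new top summand --- is precisely the unproved step, and nothing in the paper (nor in the references it cites) establishes it for general $n$ and $N$. Without that input your inductive scheme does not start, so the proposal remains a plausible outline rather than a proof. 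This is consistent with the status the paper assigns to the statement.
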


One could feel tempted of taking the Karoubi envelope of $\cX$
to guarantee Krull-Schmidt property.
Recall the Karoubi envelope of a category $\cC$ consists of adding more objects to $\cC$ 
which are images of idempotents. In the Karoubi envelope every idempotent splits
and consequently we have the Krull-Schmidt property~\cite{Ringel}. 
It is easy to see that this procedure would add too many objects making the Grothendieck ring
too large to be isomorphic to $\bmw_q(n,N)$.

\smallskip

We are suggesting a category having the Krull-Schmidt property which is not Karoubian.
Such categories are known to exist. For example the category of super-vector spaces 
with odd dimension and even dimension both equal is not Karoubian but has the Krull-Schmidt property.

It would be interesting to relate the lift of the $4$-vertex using matrix factorizations
with the one Khovanov and Rozansky did in~\cite{KR3} using convolutions of matrix factorizations.

%
%%%%%%%%%%%%%%%%%%%%%%%%%%%%%%%%%%%%%%%%%%%%%%%%%%%%%%%

\vspace*{1cm}

\subsection*{Acknowledgments}
P.V. was financially supported by Portuguese funds via the FCT - Funda\c c\~ao para  a Ci\^encia e Tecnologia, 
through project number PTDC/MAT/101503/2008,  New Geometry and Topology and  
through the post-doctoral fellowship SFRH/BPD/46299/ 2008.
E.W. has been partially supported by a FABER Grant n¡X110CVHCP-2011 and by the French ANR project ANR-11-JS01-002-01.
E.W. thanks Paul Martin for useful E-mail correspondence.

%%%%%%%%%%%%%%%%%%%%%%%%%%%%%%%%%%%%%%%%
%%%                                  %%%
%%%            Bibliography          %%%
%%%                                  %%%
%%%%%%%%%%%%%%%%%%%%%%%%%%%%%%%%%%%%%%%%

%%%%%%%%%%%%%%%%%%% end of paper %%%%%%%%%%%%%%%%%%%%%%%%%
\end{document}